\theoremstyle{plain} 
\newtheorem{thm}{Theorem}[section] 
\newtheorem{lem}[thm]{Lemma} 
\newtheorem{prop}[thm]{Proposition}      
\newtheorem{rmk}[thm]{Remark}
\theoremstyle{definition}             
\newtheorem{Def}{Definition}[section] 
\newtheorem{ese}{Example}[section]    
\newcommand{\s}{\sigma}
\newcommand{\Set}[2]{\mathcal{S}_{\Omega}(B_{#1},{#2}f)}
\newcommand{\SetCentro}[3]{\mathcal{S}_{\Omega}(B_{#1}({#2}),{#3}f)}
\newcommand{\e}{\varepsilon}
\newcommand{\R}{\mathbb{R}}
\newcommand{\Kf}{\mathbb{K}_{\Omega,f}}
\newcommand{\K}{\mathbb{K}}
\newcommand{\ol}[1]{\overline{#1}}
\newcommand{\diam}[1]{\text{diam}({#1})}
\begin{document}

\title[Abstract approach to non homogeneous Harnack inequality]{Abstract approach to non homogeneous Harnack inequality in doubling quasi metric spaces}
\author[Chiara Guidi\& Annamaria Montanari ]{Chiara Guidi \& Annamaria Montanari}
\address{Dipartimento di Matematica, Universit\`a di Bologna,
piazza di Porta S.Donato 5, 40127 Bologna, Italy.}
\email{chiara.guidi12@unibo.it,
annamaria.montanari@unibo.it}

\vspace{5mm}

\begin{abstract}
We develop an abstract theory to obtain Harnack inequality for non homogeneous PDEs in the setting of quasi metric spaces. The main idea is to adapt the notion of double ball and critical density property given by Di Fazio,  Gutiérrez, Lanconelli, taking into account the right hand side of the equation. Then we apply the abstract procedure to the case of subelliptic equations in non divergence form involving Grushin vector fields and to the case of X-elliptic operators in divergence form.

\vspace{5mm}

  {\it
  \noindent
  2010
 MSC:}   35J70, 35B45, 35B65.

\vspace{1mm}
  \noindent
{\it Keywords and phrases.} Invariant Harnack inequality. Double ball property. Critical density property. Power decay property. Doubling quasi-metric spaces. Non divergence subelliptic PDEs with measurable coefficients. Subelliptic equations. Grushin vector fields. Carnot–Carathéodory metric.
\end{abstract}

\maketitle

\section{Introduction}
Since its introduction in 1887 for positive solutions of harmonic functions, Harnack inequality has been extended to a huge variety of PDEs both in divergence and non divergence form and in Euclidean and non Euclidean setting. The importance of this type of inequality is widely recognized as it is a fundamental tool in the investigation of regularity of solutions of partial differential equations. Moser iteration technique (\cite{JM}) for elliptic operators in divergence form and Krylov-Safonov's measure theoretic approach (\cite{KS79, KS80}) are cornerstones in the development of Harnack inequality procedures. The works \cite{C89} and \cite{CC95} by Caffarelli, where the Krylov-Safonov's technique is simplified, and its extension to the linearized Monge-Ampère equation in \cite{CG97}, enlightened the quasi-metric character of the proof and inspired axiomatic procedures to Harnack inequality in the general setting of quasi metric spaces (see \cite{AFT, DGL, IMS} ). The key idea of these axiomatic procedures is to define a particular functional set and find sufficient conditions for the set to imply Harnack inequality for each member of the family of functions. This kind of approach was used by Aimar, Forzani and Toledano in \cite{AFT} to prove Harnack inequality for some class of continuous functions while Di Fazio, Gutiérrez and Lanconelli in \cite{DGL} substituted the regularity assumption with some more natural requirement on the geometry of the quasi metric space (ring condition, Definition \ref{ring condition}). Despite different assumptions, both results rely on covering lemmas and states that the Harnack inequality is a consequence of the power like decay property of the distribution function of each member of the considered family. While  Aimar, Forzani, Toledano and Di Fazio, Gutiérrez, Lanconelli proved the power-like decay property as a consequence of double ball and critical density property (see \cite{DGL} for the definitions), Indratno,  Maldonado and Silwal in \cite{IMS} substituted the first property with an integral condition (see \cite[(2.14)]{IMS} ) which makes their approach better suited for variational operators. The authors need to require the doubling quasi-metric  space $(X,d,\mu)$ (Definitions \ref{def Holder cont} and \ref{doubling}) to be such that continuous functions are dense in $L^1(X, d\mu)$.
The usefulness of these three abstract procedures is attested by the variety of works that exploited these approaches to prove Harnack type inequalities, see for example \cite{AGT, AM, T, GT}.
Unfortunately none of the theoretical approaches to Harnach inequality obtained in \cite{AFT, DGL, MS} can be directly applied when handling a PDE with non zero right hand side, indeed they can only handle functional set closed under multiplication by positive constant, while the set of solutions of a non homogeneous PDE does not satisfy that requirement. However, a remark is in order. 
If a maximum principle  holds true in a form that
permits to establish pointwise-to-measure estimates for super solutions of the non homogeneous equation
and  if the existence of a solution for the Dirichlet problem for the corresponding homogeneous PDE is guaranteed,
then it is possible to obtain the non homogeneous Harnack inequality from the homogeneous one by  an elementary argument as in \cite[Theorem 5.5]{GL}.
However, when dealing  with subelliptic PDEs in non divergence form, it is not known if a maximum principle holds true in a form that
permits to establish pointwise-to-measure estimates for super solutions such as
\cite[Theorem 2.1.1]{G16}.
For non divergence PDEs its proof depends
in a crucial way upon the maximum principle of Aleksandrov, Bakelman
and Pucci, see for example \cite[Section 9.8]{GT01}. This principle for some subelliptic
PDEs have been recently  studied,  for example in \cite{CG97, C97,  AM, DGN03, GM04, BCK15, TZ}.  However, 
to the best of our knowledge, nowadays this is still an interesting open problem for subelliptic PDEs in non divergence form and
this lack precludes one from extending the method of \cite[Theorem 5.5]{GL}
 to obtain a non homogeneous Harnack inequality in  general subelliptic settings. 

The purpose of this work is to obtain an axiomatic approach to Harnack inequality that permits to handle both homogeneous and non homogeneous equations in divergence and non divergence form in the general setting of quasi metric spaces. To this aim we modify the notion of double ball and critical density properties given in \cite{DGL},  taking into account the right hand side of the equation. 
Our main results are Theorem \ref{teo: power decay} and Theorem \ref{Harnack}, where we prove an invariant Harnack inequality for non homogeneous sets of functions 
satisfying the double ball (see Definition \ref{def: DBP}) and the critical density property (see Definition \ref{def crit dens}).
Our technique is an extension to non homogeneous sets of functions  of the abstract procedure introduced  in \cite[Section 4 and 5]{DGL}. 
This extension has been inspired by \cite[Lemma 4.5]{CC95}, where the critical density property is proved for viscosity solutions of non homogeneous uniformly elliptic equations with small enough $L^n$ norm of the right hand side.\\

The article is organized as follows. In Section 2 we recall some definitions and useful results of quasi metric set theory, then we introduce the notions of double ball and critical density property and, with the aid of an $\varepsilon$-Besicovich type lemma presented in \cite{DGL}, we will prove that these two properties imply power decay under two different sets of hypotheses (Theorem \ref{teo: power decay} ). Moreover we show that Harnack inequality is a consequence of the power decay property (Theorem \ref{Harnack}). In the last two sections we apply the abstract theory to the case of subelliptic equations in non divergence form with measurable  coefficients involving Grushin vector fields and to the case of $X-$elliptic operators in divergence form. The first application is a new result extending the one obtained by the second author in \cite{AM}. We explicitly remark that the non homogeneous double ball property is more complicated than the homogeneous one and it requires the construction of ad hoc barriers and the use of the weighted ABP property (see Theorem \ref{thm ABP}).
The second application is a new proof of the result obtained by Uguzzoni in \cite{U}.
\section{Abstract Harnack inequality}\label{sect abstract H}
The main result of our work is contained in this section, where we develop an abstract theory to obtain Harnack inequality for non homogeneous PDEs in the setting of quasi metric spaces. The idea is to define particular families of functions (usually non negative solutions of a PDE) 
and to prove that if those families satisfy two properties (namely the double ball and the critical density) plus some other conditions on the quasi metric space, then solutions of the considered PDE satisfy Harnack inequality. We remark that our result is a generalization  of the one obtained by Di Fazio, Gutiérrez and Lanconelli in \cite{DGL} where an abstract theory for homogeneous Harnack inequality is established.

\subsection{Definitions and Preliminaries}

We recall some definitions and set the notation that will be extensively used in the sequel.

\begin{Def}(Quasi distance)\label{quasi distance}
Let $Y\neq \emptyset$, we say that a function $d:Y\times Y\to [0,+\infty[$ is a quasi distance if
\begin{itemize}
\item for every $x,y\in Y$, $d(x,y)=d(y,x)$
\item for every $x,y\in Y$, $d(x,y)=0$ if and only if $x=y$
\item (quasi triangular inequality) there exists a constant $K\geq 1$ such that $$d(x,y)\leq K(d(x,z)+d(z,y))\quad \text{ for every } x,y,z \in Y.$$
\end{itemize}
\end{Def}
\noindent In this case the pair $(Y,d)$ is called quasi metric space and the set 
\begin{equation*}
B_r(x):=\{y\in Y: d(x,y)<r \}
\end{equation*}
\noindent is called a $d$-ball of center $x\in Y$ and radius $r>0$ (by abuse of notation we will write ``ball" instead of ``$d$-ball"). It is well known that a quasi metric $d$ on a set $Y$ induces a topology on $Y$ such that the balls $B_r(y)$ form a basis of neighborhoods.

\begin{Def} \cite[p.66]{CW}
We say that the quasi metric space $(Y,d)$ is of homogeneous type if the balls $B_r(y)$ are a basis of open neighborhoods and there exists a positive integer $N$ such that for each $x$ and $r>0$, the ball $B_r(x)$ contains at most $N$ points $x_j$ such that $d(x_i,x_j)\geq r/2$ with $i\neq j$.
\end{Def}

\begin{Def}\label{def Holder cont}(H\"older quasi-distance)
We say that the quasi distance $d$ is H\"older continuous if there exist positive constants $\beta$ and $0<\alpha\leq 1$ such that
\begin{equation}\label{diseq: Holder cont}
|d(x,y)-d(x,z)|\leq \beta d(y,z)^{\alpha}(d(x,y)+d(x,z))^{1-\alpha}\quad\text{for all }x,y,z\in Y.
\end{equation}
In this case, the pair $(Y,d)$ is said to be a H\"older quasi metric space.
\end{Def} 
In the sequel we will always assume $d$ to be a H\"older quasi-distance. This requirement on the regularity of $d$ is not a restrictive assumption since Marcías and Segovia \cite[Theorem 2]{MS} proved that given a quasi distance $d$ it is always possible to construct an equivalent quasi distance $d'$ which is H\"older continuous and such that $d'$-balls are open sets with respect the topology induced by $d'$. 

We now introduce the notion of doubling property on a quasi-metric space $(Y,d)$ with a positive measure $\mu$ defined on a $\sigma$-algebra $\mathscr{A}$ on $Y$ containing $d$-balls.
\begin{Def}\label{doubling}(Doubling)\label{def doubling space}
We say that the measure $\mu$ satisfies the doubling property if there exists a constant $C_D>0$ such that
\begin{equation*}
0<\mu(B_{2r}(x))\leq C_D \mu(B_r(x)),\quad \text{ for all } x\in Y\text{ and } r>0
\end{equation*}
or equivalently
\begin{equation*}
\mu(B_{r_2}(x))\leq C_D\biggl(\frac{r_2}{r_1}\biggl)^q\mu(B_{r_1}(x)), \quad \text{where } q=\log_2C_D.
\end{equation*}
In this case the triple $(Y,d,\mu)$ is called a doubling quasi metric space.
\end{Def}
%

We remark that any H\"older doubling quasi metric space is always separable (see  \cite{MS}), consequently open sets are measurable as they are countable union of $d$-balls.

In the sequel we will also use the following ring condition on the doubling quasi metric space $(Y,d,\mu).$

\begin{Def}(Ring condition)\label{ring condition}
We say that the doubling quasi metric space $(Y,d,\mu)$ satisfies the ring condition if there exists a non negative function $\omega(\e)$ such that $\omega(\e)\to 0$ as $\e\to 0^+$ and for every ball $B_r(x)$ and all $\e>0$ sufficiently small we have
\begin{equation*}
\mu(B_r(x)\setminus B_{(1-\e)r}(x))\leq\omega(\e)\mu(B_r(x)).
\end{equation*}
\end{Def}
\subsection{Double Ball Property, Critical Density and Power Decay}
Consider $(Y,d,\mu)$ a H\"older doubling quasi metric space and $\Omega\subseteq Y$ open. In order to build a non homogeneous abstract Harnack inequality we consider families of functions which depend on $\Omega$ and another variable that takes into account the non homogeneous part of the PDE.

\begin{Def}
Let $P(\Omega)$ be the power set of $\Omega$ equipped with a partial order $\preceq$. Given $S_1$ and $S_2\in P(\Omega)$ we say that the set function $\mathcal{S}:P(\Omega)\to \mathbb{R}$ is order preserving if $S_1\preceq S_2$ implies $\mathcal{S}(S_1)\leq \mathcal{S}(S_2)$.
\end{Def}

Let $\mathcal{B}=\{B_r(x): x\in\Omega,\; r>0\;\text{and}\: B_r(x)\subset\Omega\}$ be the set of all the quasi metric balls contained in $\Omega$ ordered by inclusion, and 
$\mathcal{F}(\Omega)=\{ f:\Omega \rightarrow \mathbb{R} 
 \text{ such that } f \text{ is }\text{measurable} \}$.

Hereafter we consider  a function

$$\mathcal{S}_{\Omega}:\mathcal{B}\times \mathcal{F}(\Omega)\to [0,+\infty[$$

such that
\begin{itemize}
\item $\mathcal{S}_{\Omega}$ is order preserving with respect to the first variable, i.e. $\mathcal{S}_{\Omega}(B_R(x), f)\geq \mathcal{S}_{\Omega}(B_r(y), f)$ for every $B_r(y)\subseteq B_R(x)$ and for every $f\in \mathcal{F}(\Omega)$.

\item For every $\lambda\in\mathbb{R},$ $f\in \mathcal{F}(\Omega)$ and $B_r(x)\subseteq \Omega$, we have $\mathcal{S}_{\Omega}(B_r(x),\lambda f)=|\lambda|\mathcal{S}_{\Omega}(B_r(x),f)$. 
\end{itemize}
\begin{Def}
We say that $f\in \mathcal{L}(\Omega)$ if $f\in \mathcal{F}(\Omega)$ and 
$
\mathcal{S}_{\Omega}(B_r(x),f)<+\infty$ for every $B_r(x)\subseteq \Omega.$
\end{Def}

\begin{ese}
For example,  for the function $\mathcal{S}_{\Omega}(B_r(x),f)=\left(\int_{B_r(x)}|f|^p\right)^{1/p}$ we have 
that $\mathcal{L}(\Omega)=L^p_{loc}(\Omega).$

\end{ese}

\begin{Def}\label{def KOmega}
For $f\in \mathcal{L}(\Omega)$ we define $\mathbb{K}_{\Omega, f}$ a family of non negative measurable functions  
\begin{equation*}
\mathbb{K}_{\Omega, f}\subset \{u:A\to\mathbb{R} \text{ such that } A\subset\Omega, u\geq 0 \text{ and } u \text{ is }\text{measurable} \}
\end{equation*}
such that the following two conditions hold:
\begin{itemize}
\item If $u\in \mathbb{K}_{\Omega, f}$ then $\lambda u\in \mathbb{K}_{\Omega, \lambda f}$ for all  $\lambda\geq 0.$
\item If $u\in \mathbb{K}_{\Omega, f}$ then for every $\lambda,\tau\geq 0$ such that  $\tau- \lambda u\geq 0$ we have $\tau- \lambda u \in \mathbb{K}_{\Omega, -\lambda f}$.

\end{itemize}

\end{Def}
In particular if $u\in\Kf$ and its domain contains $A\subset \Omega$ we write $u\in \Kf(A)$. The next two sections, where a Harnack inequality for a specific operator $L$ is obtained, will clarify the role of the families of functions $\mathbb{K}_{\Omega, f}$. Roughly speaking, $\mathbb{K}_{\Omega,f}$ will contain all nonnegative the measurable solutions $u$ with domain contained in $\Omega$ of an equation of the type  $Lu=f,$ where 
$L$ is a second order partial differential operator.

\begin{Def}(Structural constant)\label{def structural constant} We say that $c$ is a structural constant if it is independent of each $u$ belonging to the family $\Kf$, of $f\in\mathcal{L}(\Omega)$ and of the balls defined by the quasi distance considered.
\end{Def}

In what follows we do not specify the center of a ball if the center is the point $x_0$, namely we write $B_R$ instead of $B_R(x_0)$.
 
\begin{Def}(Critical density)\label{def crit dens}
Let $0<\nu<1$. We say that $\Kf$ satisfies the $\nu$ critical density property if there exist structural constants  $0<\e_{CD}, c<1$ depending on $\nu$ and $\eta_{CD}>1$ such that for every ball $B_{\eta_{CD}R}\subset\Omega$ and for every $u\in\Kf(B_{\eta_{CD}R})$ with
\begin{equation*}
\mu(\{x\in B_R:u(x)\geq 1\})\geq \nu\mu(B_R),
\end{equation*} 
we have
\begin{equation*}
\inf_{B_{R/2}}u\geq c\quad\text{or}\quad \Set{\eta_{CD}R}{}\geq\e_{CD}.
\end{equation*}
In this case we say that  $\Kf$ satisfies the $\nu$ critical density property $CD (\nu,c,\e_{CD},\eta_{CD})$.
\end{Def}

\begin{rmk}\label{rmk eps crit density}
If $\Kf$ satisfies the $\ol{\nu}$ critical density property  $CD (\ol{\nu},c,\e_{CD},\eta_{CD})$, then $\Kf$ satisfies $CD (\nu,c,\e_{CD}, \eta_{CD})$ for any $\nu>\ol{\nu}.$
\end{rmk}

\begin{Def}(Double ball property) \label{def: DBP}
We say that $\Kf$ satisfies the double ball property if there exist structural constants $0<\e_{DB}, \gamma<1$ and $\eta_{DB}>1$ such that for every $B_{\eta_{DB}R}\subset\Omega$ and for every $u\in\Kf(B_{\eta_{DB}R})$ with 
\begin{equation*}
\inf_{B_{R/2}}u\geq 1\quad\text{and}\quad \Set{\eta_{DB}R}{}< \e_{DB}
\end{equation*}
we have  
\begin{equation*}
\inf_{B_R}u\geq \gamma.
\end{equation*}
In this case we say that  $\Kf$ satisfies the double ball property $DB (\gamma,\e_{DB},\eta_{DB})$.
\end{Def}

It is not restrictive to require the critical density and the double ball property to hold with the same constants $\eta_{CD}=\eta_{DB}$ and $\e_{CD}=\e_{DB}$, indeed we have the following remark

\begin{rmk}\label{rmk cd db stesso eps }
If $\Kf$ satisfies the $\nu$ critical density property  $CD (\nu,c,\e_{CD},\eta_{CD})$ and the double ball property $DB (\gamma,\e_{DB},\eta_{DB})$ then it satisfies $CD (\nu,c,\e,\eta)$ and $DB (\gamma,\e,\eta)$ with $\e=\min\{\e_{DB},\e_{CD}\}$ and $\eta=\max\{\eta_{DB},\eta_{CD}\}$.
\end{rmk}

\begin{Def}(Power decay)\label{def:power decay}
We say that the family of functions $\Kf$ satisfies the power decay  property if there exist structural constants $0\leq \gamma,\e_{P}<1$ and  $\eta_{P},\;M>1$  such that for each $u\in\Kf(B_{\eta_{P} R})$ with 
\begin{equation*}
\inf_{B_R}u\leq 1\quad\text{and}\quad  \Set{\eta_{P}R}{}<\e_{P}
\end{equation*}
we have
\begin{equation*}
\mu(\{x\in B_{R/2}:u(x)>M^k\})\leq \gamma^k\mu(B_{R/2})\quad\text{for every } k\in\mathbb{N} .
\end{equation*}
In this case we say that $\Kf$ satisfies the power decay property $PD (M,\gamma,\e_{P},\eta_{P})$.
\end{Def}

We aim to prove that in a H\"older doubling quasi metric space if $\Kf$ satisfies the double ball and critical density property plus some other conditions then it also satisfies the  power decay property. To show this we need some preliminary results. 


\begin{prop} Let $C_D$ be the doubling constant, if $\Kf$ satisfies the $\nu$ critical density property $CD(\nu,c,\e_{CD},\eta_{CD})$ for some $0<\nu<1/C_D^2$, then $\Kf$ satisfies the double ball property $DB(c,\e_{CD},2\eta_{DB})$.
\end{prop}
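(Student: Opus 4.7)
\medskip

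\noindent\emph{Proof plan.} The strategy is to apply the critical density hypothesis directly to the family member $u$ on the \emph{concentric} ball $B_{2R}(x_0)$, so that the conclusion ball $B_{(2R)/2}=B_R$ produced by $CD$ is precisely the one that appears on the right-hand side of $DB$. Concretely, I expect the statement to read $DB(c,\varepsilon_{CD},2\eta_{CD})$ (with $\eta_{DB}$ being a typographical stand-in for $\eta_{CD}$): given a ball $B_{2\eta_{CD}R}\subset\Omega$ and $u\in\Kf(B_{2\eta_{CD}R})$ satisfying $\inf_{B_{R/2}}u\ge 1$ and $\Set{2\eta_{CD}R}{}<\varepsilon_{CD}$, I want to deduce $\inf_{B_R}u\ge c$.

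The first step is to verify the measure condition required by $CD$ on the ball $B_{2R}(x_0)$. Since $\inf_{B_{R/2}}u\ge 1$, the superlevel set contains $B_{R/2}$, so
\begin{equation*}
\mu(\{x\in B_{2R}(x_0):u(x)\ge 1\})\ge \mu(B_{R/2}(x_0)).
\end{equation*}
Two applications of the doubling property ($R/2\to R\to 2R$) give $\mu(B_{2R}(x_0))\le C_D^2\,\mu(B_{R/2}(x_0))$, hence
\begin{equation*}
\frac{\mu(\{u\ge 1\}\cap B_{2R}(x_0))}{\mu(B_{2R}(x_0))}\ge \frac{1}{C_D^2}>\nu,
\end{equation*}
where the last inequality uses the standing assumption $\nu<1/C_D^2$.

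The second step is to invoke $CD(\nu,c,\varepsilon_{CD},\eta_{CD})$ applied to $u$ on the ball of radius $\rho:=2R$ centered at $x_0$. The required enveloping ball $B_{\eta_{CD}\rho}=B_{2\eta_{CD}R}$ is contained in $\Omega$ by hypothesis, and $u\in\Kf(B_{2\eta_{CD}R})$. The critical density property then yields the dichotomy
\begin{equation*}
\inf_{B_{\rho/2}(x_0)}u\ge c\qquad\text{or}\qquad \Set{2\eta_{CD}R}{}\ge \varepsilon_{CD}.
\end{equation*}
The second alternative is ruled out by the smallness hypothesis on $\mathcal{S}_{\Omega}(B_{2\eta_{CD}R},f)$ that appears in the $DB$-property we wish to verify, so we conclude $\inf_{B_R(x_0)}u\ge c$, which is exactly $DB(c,\varepsilon_{CD},2\eta_{CD})$.

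There is no real obstacle here: the proof is a matching of scales, with the crucial numerology being that the doubling constant enters to the power equal to the number of doublings from $B_{R/2}$ up to $B_{2R}$, namely two, which is precisely what the hypothesis $\nu<1/C_D^2$ accommodates. The only care needed is bookkeeping: noting that all balls are concentric at $x_0$, that the factor $2$ in $2\eta_{CD}$ comes from the radius $\rho=2R$ used to invoke critical density, and that the $CD$-excess alternative is absorbed by the $\mathcal{S}_\Omega<\varepsilon_{CD}$ hypothesis of the target double-ball statement.
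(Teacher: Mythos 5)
Your proof is correct and is essentially the same argument as the paper's: both apply the critical density property at scale $2R$, invoke doubling twice to get $\mu(B_{2R})\leq C_D^2\mu(B_{R/2})$, and use $\nu<1/C_D^2$ to obtain the required measure threshold. The paper phrases this as a proof by contradiction while you run it directly, but the content is identical; you are also right that the paper's $2\eta_{DB}$ in the conclusion is a typo for $2\eta_{CD}$.
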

\begin{proof}
Suppose by contradiction that there exists $u\in\Kf (B_{2\eta_{CD}R})$, $B_{2\eta_{CD}R}\subset\Omega$ , such that $\inf_{B_{R/2}}u\geq 1$ and $\Set{2\eta_{CD}R}{}<\e_{CD}$ but $\inf_{B_R}u<c$. Then by the $\nu$ critical density property we have 
\begin{equation*}
\mu(\{x\in B_{2R}:u(x)\geq 1\})< \nu\mu(B_{2R})\quad \text{or}\quad  \Set{2\eta_{CD}R}{}\geq\e_{CD}.
\end{equation*} 
If the second inequality holds we have an immediate contradiction. Otherwise if the first inequality holds, since $B_{R/2}\subseteq \{x\in B_{2R}:u(x)\geq 1\}$, we find
\begin{equation*}
\mu (B_{R/2})\leq \mu(\{x\in B_{2R}:u(x)\geq 1\})\leq \nu\mu(B_{2R})\leq \nu C_D^2\mu(B_{R/2})<\mu(B_{R/2})
\end{equation*}
a contradiction.
\end{proof}

\begin{prop}\label{prop: critical dens alpha}
Suppose $\K_{\Omega, f}$ satisfies the double ball  and the $\nu$ critical density properties  $DB (\gamma,\e,\eta)$ and $CD (\nu,c,\e,\eta)$ for every $f \in \mathcal{L}(\Omega).$
Then there exists a structural constant $M_0=\frac{1}{\gamma c}>1$ such that for any positive constant $\alpha$ and for any $u\in\Kf(B_{\eta R})$ with
\begin{equation*}
\mu(\{x\in B_R:u(x)\geq \alpha\})\geq \nu\mu( B_R),
\end{equation*} 
we have \begin{equation*}
\inf_{B_{R}}u\geq \frac{\alpha}{M_0}\quad\text{or}\quad \Set{\eta R}{}\geq \e\alpha c.
\end{equation*}
\end{prop}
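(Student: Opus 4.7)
The plan is to reduce the statement to direct applications of the critical density and double ball properties by rescaling $u$ twice: first by $\alpha$ to normalize the measure hypothesis to threshold $1$, and then by $c\alpha$ to bootstrap the infimum from $B_{R/2}$ to $B_R$. The scaling axioms in Definition \ref{def KOmega} together with the homogeneity $\mathcal{S}_\Omega(B_r(x), \lambda f) = |\lambda|\,\mathcal{S}_\Omega(B_r(x), f)$ are exactly what make these rescalings legal within the axiomatic framework.

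First I would set $v := u/\alpha$. By the first bullet of Definition \ref{def KOmega}, $v \in \mathbb{K}_{\Omega, f/\alpha}(B_{\eta R})$, and $f/\alpha$ still lies in $\mathcal{L}(\Omega)$ by the scaling of $\mathcal{S}_\Omega$. The measure hypothesis rewrites as $\mu(\{x \in B_R : v(x) \geq 1\}) \geq \nu \mu(B_R)$, so the $\nu$ critical density property $CD(\nu, c, \e, \eta)$ applied to $v$ yields the dichotomy
\[
\inf_{B_{R/2}} v \geq c \qquad \text{or} \qquad \mathcal{S}_\Omega(B_{\eta R}, f/\alpha) \geq \e.
\]
By homogeneity of $\mathcal{S}_\Omega$ the second alternative is $\mathcal{S}_\Omega(B_{\eta R}, f) \geq \e \alpha \geq \e c \alpha$ (since $c < 1$), which is one of the two conclusions of the proposition, so in that case we are done.

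In the remaining case we have $\inf_{B_{R/2}} u \geq c\alpha$. Now I would renormalize by setting $w := u/(c\alpha)$, so that $w \in \mathbb{K}_{\Omega, f/(c\alpha)}(B_{\eta R})$ and $\inf_{B_{R/2}} w \geq 1$. This is the precise input required by the double ball property $DB(\gamma, \e, \eta)$ applied to $w$ with right-hand side $f/(c\alpha)$, which gives a second dichotomy:
\[
\mathcal{S}_\Omega(B_{\eta R}, f/(c\alpha)) \geq \e \qquad \text{or} \qquad \inf_{B_R} w \geq \gamma.
\]
Again using homogeneity, the first alternative becomes $\mathcal{S}_\Omega(B_{\eta R}, f) \geq \e c \alpha$, while the second gives $\inf_{B_R} u \geq \gamma c \alpha = \alpha/M_0$ with $M_0 = 1/(\gamma c) > 1$ (since $\gamma, c \in (0,1)$). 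In both cases one of the two claimed conclusions holds, completing the proof.

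I do not expect any substantive obstacle here; the only point that requires care is to check that $f/\alpha$ and $f/(c\alpha)$ remain in $\mathcal{L}(\Omega)$ and that the rescaled functions $v$ and $w$ inherit membership in the appropriate $\mathbb{K}_{\Omega, \cdot}$ class, both of which follow immediately from the scaling axiom on $\mathcal{S}_\Omega$ and the first bullet in Definition \ref{def KOmega}. The proposition is essentially a clean two-step rescaling argument, and its importance is that it upgrades the critical density from threshold $1$ to an arbitrary threshold $\alpha$ at the cost of a universal factor $M_0$, which is exactly the flexibility needed in the iterative power decay argument of Theorem \ref{teo: power decay}.
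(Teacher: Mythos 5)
Your proposal is correct and matches the paper's own proof essentially step for step: rescale by $\alpha$ to apply critical density, then by $c\alpha$ to apply the double ball property, tracking the $\mathcal{S}_\Omega$ term via its homogeneity at each stage. The only cosmetic difference is that you make the intermediate membership checks ($f/\alpha, f/(c\alpha) \in \mathcal{L}(\Omega)$) explicit, which the paper leaves tacit.
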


\begin{proof}
Since $u\in\Kf(B_{\eta R})$ we have $\frac{u}{\alpha}\in\K_{\Omega,\frac{f}{\alpha}}(B_{\eta R})$. By the $\nu$ critical density property of $\K_{\Omega,\frac{f}{\alpha}}$ follows either $\inf_{B_{R/2}}\frac{u}{\alpha}\geq c$ or $\Set{\eta R}{}\geq\alpha \e\geq \alpha c \e$. In the second case we are done, otherwise $\frac{u}{\alpha c}\in \K_{\Omega,\frac{f}{\alpha c}}(B_{\eta R})$ and so, either $\Set{\eta R}{}\geq \alpha c \e $ or we can apply the double ball property to obtain $\inf_{B_R}\frac{u}{\alpha c}\geq \gamma$. Defining $\gamma c:=\frac{1}{M_0}$ we conclude the proof.
\end{proof}


\begin{lem}\label{lemma raggio rho}
Suppose the same hypotheses of Proposition \ref{prop: critical dens alpha} hold, $u\in \Kf(B_{\theta R})$,
$$\inf_{B_R}u\leq 1$$
and there exist constants $\alpha>0$, $\rho<2KR$ and $y\in B_R$ such that 
\begin{equation}\label{hyp lemma raggio rho}
\mu(\{x\in B_{\rho}(y):u(x)\geq\alpha\})\geq\nu\mu(B_{\rho}(y))
\end{equation}
where $ \theta:=K(1+4\eta K)>1$ and $K$ is the constant in the quasi triangle inequality.\\
Then there exist positive structural constants $\s,\;M_1$ such that 
\begin{equation}\label{tesi lemma raggio rho}
\rho\leq \biggl(\frac{M_1}{\alpha}\biggr)^{\s}R\quad\text{or}\quad  \Set{\theta R}{}\geq \frac{\e\alpha c \gamma^p}{M_0}.
\end{equation}
Here $M_1=(4K)^{1/\s}M_0$, $M_0=\frac{1}{\gamma c}$ is defined in Proposition \ref{prop: critical dens alpha}, $\s=-\frac{\log2}{\log\gamma}$ and $p\in\mathbb{N}$ is chosen so that $2^{p-1}\rho\leq 2KR\leq 2^p\rho$.
\end{lem}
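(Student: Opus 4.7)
The plan is to run an iteration of the double ball property, starting from an initial application of the extended critical density (Proposition 2.11), until we blow the ball $B_\rho(y)$ up to cover $B_R$. The geometric sequence $\rho, 2\rho, 4\rho, \dots, 2^p\rho$ is designed precisely so that $2^p\rho \geq 2KR$ forces $B_R \subset B_{2^p\rho}(y)$ (using the quasi-triangle inequality and $y \in B_R$). At each stage we either pick up a factor of $\gamma$ on the infimum, or we bail out because the $\mathcal{S}_\Omega$-quantity is too large — and this bail-out is exactly the second alternative in \eqref{tesi lemma raggio rho}.

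More concretely, I would first apply Proposition \ref{prop: critical dens alpha} to $B_\rho(y)$ with constant $\alpha$: either $\Set{\eta\rho(y)}{} \geq \e \alpha c$, in which case (since $B_{\eta\rho}(y) \subset B_{\theta R}$ by the choice $\theta = K(1+4\eta K)$ and $\rho < 2KR$) the order-preserving property gives $\Set{\theta R}{} \geq \e\alpha c \geq \e\alpha c\gamma^p/M_0$ and we are done; or $\inf_{B_\rho(y)} u \geq \alpha \gamma c = \alpha/M_0$. In the latter case, rescaling $u \mapsto u M_0/\alpha$ and applying the double ball property (with $R/2 = \rho$) gives either the failure of smallness — $\Set{2\eta\rho(y)}{} \geq \e \alpha\gamma c$ — or $\inf_{B_{2\rho}(y)} u \geq \alpha\gamma^2 c$. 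Iterating this $p$ times produces the dichotomy: either at some step $k\in\{0,\dots,p-1\}$ one has
\begin{equation*}
\Set{2^{k+1}\eta\rho(y)}{} \geq \e \alpha \gamma^{k+1} c \geq \e\alpha c\gamma^p,
\end{equation*}
and since $2^{k+1}\eta\rho \leq 2^p\eta\rho \leq 4K\eta R$ the ball $B_{2^{k+1}\eta\rho}(y)$ lies in $B_{\theta R}$ and the order-preserving property yields $\Set{\theta R}{} \geq \e\alpha c\gamma^p \geq \e\alpha c\gamma^p/M_0$; or the iteration completes and gives $\inf_{B_{2^p\rho}(y)} u \geq \alpha \gamma^{p+1} c$.

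In the second alternative of this final dichotomy, I exploit that $B_R \subset B_{2^p\rho}(y)$ together with the hypothesis $\inf_{B_R} u \leq 1$ to conclude $\alpha\gamma^{p+1} c \leq 1$, i.e., $\gamma^p \leq M_0/\alpha$. Taking logarithms and using $2^{p-1}\rho \leq 2KR$ (equivalently $2^p \leq 4KR/\rho$) gives
\begin{equation*}
\sigma \log(\alpha/M_0) \leq \log(4KR/\rho), \qquad \sigma = -\frac{\log 2}{\log \gamma},
\end{equation*}
which rearranges to $\rho \leq 4K(M_0/\alpha)^\sigma R = (M_1/\alpha)^\sigma R$ with $M_1 = (4K)^{1/\sigma} M_0$, exactly the first alternative in \eqref{tesi lemma raggio rho}.

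The main obstacle is really just the bookkeeping: one has to verify at every step of the iteration that the enlarged ball $B_{2^{k+1}\eta\rho}(y)$ still sits inside the universe $B_{\theta R}$ (this is where the precise value $\theta = K(1+4\eta K)$ is forced, through the quasi-triangle inequality applied to $y\in B_R$ together with $2^p\rho \leq 4KR$), and to track the rescalings $u/(\alpha\gamma^{k+1}c) \in \mathbb{K}_{\Omega, f/(\alpha\gamma^{k+1}c)}$ so that the double ball hypothesis $\mathcal{S}_\Omega(B_{2^{k+1}\eta\rho}(y), f/(\alpha\gamma^{k+1}c)) < \e$ translates correctly into $\Set{2^{k+1}\eta\rho(y)}{} < \e\alpha\gamma^{k+1}c$. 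Once these two checks are in place the rest is a clean logarithmic count.
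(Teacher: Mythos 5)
Your proposal is correct and follows essentially the same route as the paper's proof: an initial application of Proposition \ref{prop: critical dens alpha} on $B_\rho(y)$, then $p$ applications of the double ball property to inflate $B_\rho(y)$ up to $B_{2^p\rho}(y)\supset B_R$, and finally the logarithmic bookkeeping $\gamma^p\leq M_0/\alpha$ combined with $2^p\rho\leq 4KR$ to extract the radius bound $\rho\leq (M_1/\alpha)^\sigma R$. The only cosmetic difference is that the paper argues by contraposition (assume $\Set{\theta R}{}<\e\alpha c\gamma^p/M_0$ once and for all, so every step of the iteration goes through), whereas you run the dichotomy at each step and bail out to the second alternative the moment the $\mathcal{S}_\Omega$-quantity exceeds $\e\alpha\gamma^{k+1}c\geq\e\alpha c\gamma^p/M_0$; both are sound and land on the same threshold.
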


\begin{proof}
If the second inequality in \eqref{tesi lemma raggio rho} holds, the proof is completed. Thus we suppose $\Set{\theta R}{}<\frac{\e\alpha c \gamma^p}{M_0}$ which implies
\begin{equation}\label{normaf lemma raggio rho }
\SetCentro{r}{\tilde{y}}{} <\frac{\e\alpha c \gamma^p}{M_0} \quad \text{for every } B_r(\tilde{y})\subseteq B_{\theta R}.
\end{equation}
Since $B_{\eta\rho}(y)\subset B_{\theta R}$ and inequality \eqref{hyp lemma raggio rho} holds, we apply Proposition \ref{prop: critical dens alpha} and taking into account \eqref{normaf lemma raggio rho } we deduce
\begin{equation}\label{inf1 lemma raggio rho}
\inf_{B_{\rho}(y)}u\geq \frac{\alpha}{M_0}.
\end{equation}
Moreover if $p$ is chosen as in the statement, since $y\in B_R$, it follows $B_{2^{p}\eta\rho}(y)\subseteq B_{\theta R}$ and so \eqref{normaf lemma raggio rho } implies 
\begin{equation}\label{normaf2 lemma raggio rho}
\SetCentro{2^{k+1}\eta\rho}{y}{}<\frac{\e\alpha c\gamma^p}{M_0} \leq \frac{\e\alpha c\gamma^k}{M_0}\quad\text{for every }0\leq k\leq p-1,
\end{equation}
hence we can repeatedly apply the double ball property to $\frac{u M_0}{\alpha\gamma^k}$ in $B_{2^{k+1}\eta \rho}(y)$, where $k=0,\dots,p-1$, obtaining
\begin{equation}\label{infp lemma raggio rho}
\inf_{B_{2^p\rho}(y)}u\geq \gamma^p \frac{\alpha}{M_0}.
\end{equation} 
Indeed by \eqref{inf1 lemma raggio rho} we have $\inf_{B_{\rho}(y)}\frac{uM_0}{\alpha}\geq 1$, this together with \eqref{normaf2 lemma raggio rho} allow us to use the double ball property of $K_{\Omega,f \frac{M_0}{\alpha} }$ to get $\inf_{B_{2\rho}(y)}u\frac{M_0}{\alpha}\geq \gamma$. Now we have $\inf_{B_{2\rho}(y)}u\frac{M_0}{\alpha\gamma}\geq 1$. Again, by \eqref{normaf2 lemma raggio rho} and the double ball property, $\inf_{B_{4\rho}(y)}u\frac{M_0}{\alpha}\geq \gamma^2$. We repeat this procedure $p$ times to find \eqref{infp lemma raggio rho} and consequently
\begin{equation*}
1\geq \inf_{B_R}u\geq \inf_{B_{2^p\rho}(y)}u\geq \gamma^p\frac{\alpha}{M_0}.
\end{equation*}
From the first and the last inequality in the expression above we get $\gamma^p\leq \frac{M_0}{\alpha}$. Since $\gamma^{\s}=1/2$, raising both side of the inequality to the power $\s$, it follows $2^{-p}\leq \bigl(\frac{M_0}{\alpha}\bigr)^{\sigma}$. Finally, multiplying both sides by $2^p\rho$ and keeping in mind the definition of $p$ in the statement we get the thesis.
\end{proof}

\begin{lem}\label{lemma raggio rho sharp}
Under the same hypotheses of Lemma \ref{lemma raggio rho} we have
\begin{equation*}\label{tesi lemma raggio rho sharp}
\rho\leq \biggl(\frac{M_1}{\alpha}\biggr)^{\s}R\quad\text{or}\quad \Set{\theta R}{}\geq \e c.
\end{equation*}
\end{lem}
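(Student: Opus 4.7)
The plan is to deduce Lemma \ref{lemma raggio rho sharp} from Lemma \ref{lemma raggio rho} by a purely algebraic simplification of the second alternative. Assume the first conclusion fails, i.e.\ $\rho>(M_1/\alpha)^{\s}R$; then Lemma \ref{lemma raggio rho} forces
\[
\Set{\theta R}{}\geq \frac{\e\alpha c\gamma^p}{M_0},
\]
so the whole task reduces to showing that under the assumption $\rho>(M_1/\alpha)^{\s}R$ the factor $\alpha\gamma^p/M_0$ is at least $1$. This is exactly where the specific calibration $M_1=(4K)^{1/\s}M_0$ and $\gamma^{\s}=1/2$ will enter.

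First I would feed the hypothesis $\rho>(M_1/\alpha)^{\s}R$ into the upper inequality $2^{p-1}\rho\leq 2KR$ from the definition of $p$, which gives $2^p\leq 4KR/\rho$ and therefore
\[
2^p< 4K\bigl(\alpha/M_1\bigr)^{\s}.
\]
The definition $M_1=(4K)^{1/\s}M_0$ yields $M_1^{\s}=4K\,M_0^{\s}$, so the right-hand side collapses neatly to $(\alpha/M_0)^{\s}$ and we obtain $2^p<(\alpha/M_0)^{\s}$. Now I would invoke the identity $\gamma^{\s}=1/2$, i.e.\ $2=\gamma^{-\s}$, to rewrite $2^p=\gamma^{-p\s}$; taking the $\s$-th root (legitimate because $\s>0$) converts the inequality into $\gamma^{-p}<\alpha/M_0$, i.e.\ $\alpha\gamma^p/M_0>1$.

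Combining this with the bound from Lemma \ref{lemma raggio rho} gives $\Set{\theta R}{}> \e c$, which finishes the proof. The argument is entirely bookkeeping; the only step where care is needed is tracking the exponent $\s$ through the substitution $M_1=(4K)^{1/\s}M_0$ and the identity $\gamma^{\s}=1/2$, because those two choices were made precisely so that $4K(\alpha/M_1)^{\s}$ telescopes to $(\alpha/M_0)^{\s}$ and $2^p$ converts cleanly into $\gamma^{-p}$. So the main (minor) obstacle is to be sure the normalizations of $M_1$ and $\s$ are used consistently and that the strict inequality produced by this chain indeed upgrades to the non-strict statement $\geq \e c$ claimed in the lemma.
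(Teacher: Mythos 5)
Your proof is correct, and it takes a genuinely different route from the paper's. The paper argues in the direction ``assume $\Set{\theta R}{}<\e c$, show $\rho\leq (M_1/\alpha)^{\s}R$'' and does so by a two-case split on whether $\Set{\theta R}{}$ is below or above $\e c\,\delta$ with $\delta:=\alpha\gamma^p/M_0$: in the first case it re-invokes the \emph{interior} of the proof of Lemma~\ref{lemma raggio rho} (the chain $1\geq\inf_{B_R}u\geq\gamma^p\alpha/M_0$ built from the double ball and critical density applications) to obtain $\delta\leq 1$, and in the second case $\delta<1$ is trivial; then the final algebraic step of Lemma~\ref{lemma raggio rho}'s proof converts $\delta\leq 1$ into the radius bound. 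You instead take the contrapositive, assume $\rho>(M_1/\alpha)^{\s}R$, and show that the definitions of $p$ (via $2^{p-1}\rho\leq 2KR$), $M_1=(4K)^{1/\s}M_0$ and $\gamma^{\s}=1/2$ by themselves already force $\alpha\gamma^p/M_0>1$, so the threshold $\e\alpha c\gamma^p/M_0$ appearing in the \emph{statement} of Lemma~\ref{lemma raggio rho} is automatically $>\e c$. This is a cleaner deduction: you only use the statement of Lemma~\ref{lemma raggio rho} together with the explicit constants, you need no case split, and you never have to re-enter the analytic argument. The only small point worth stating explicitly in a write-up is that $\s>0$ (since $0<\gamma<1$), which justifies taking the $\s$-th root to pass from $\gamma^{-p\s}<(\alpha/M_0)^{\s}$ to $\gamma^{-p}<\alpha/M_0$, and you already flag that the strict inequality obtained upgrades harmlessly to the claimed $\geq\e c$.
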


\begin{proof}
We shall prove that if  $\Set{\theta R}{}<\e c$ then $\delta:= \frac{\alpha \gamma^p}{M_0}\leq 1.$
Indeed, if $\Set{\theta R}{}< \e c \delta $ then by the proof of the previous lemma $\delta\leq 1.$ 
On the other side, if $\e c \delta \leq \Set{\theta R}{}< \e c,$ then obviously $\delta <1.$
Since in both cases $\delta\leq 1,$ then $\rho\leq \bigl(\frac{M_1}{\alpha}\bigr)^{\s}R.$
\end{proof}
For reader convenience we recall here the definition of a density point that will be used in the proof of the following power decay property.
\begin{Def}(Density point) We say that $x\in Y$ is a density point for $X\subset Y$ if $\frac{\mu(B_R(x)\cap X)}{\mu(B_r(x))}\to 1$ as $r\to 0^+$
\end{Def}

\begin{thm}[Power decay]\label{teo: power decay}
Let $(Y,d,\mu)$ be a H\"older doubling quasi-metric space and consider $\Omega\subset Y$ open, $f:\Omega\to \mathbb{R}$. Suppose there exists $0< \delta <1$ such that $\mu(B_r(x))\leq \delta\mu(B_{2r}(x))$ for every $B_{2r}(x)\subset\Omega$ and one of the following pairs of conditions holds
\begin{itemize}
\item[(A)]
\begin{itemize}
\item[(A1)]$\K_{\Omega, f}$ satisfies the double ball and the $\nu$ critical density properties $DB (\gamma,\e,\eta)$ and $CD (\nu,c,\e,\eta)$ for every $f\in \mathcal{L}(\Omega)$.

\item[(A2)] $(Y,d,\mu)$ satisfies the ring conditions with $\omega(s)=o(\log^{-2}(1/s))$ as $s\to 0^+$.
\end{itemize}
or
\item[(B)]
\begin{itemize}
\item[(B1)]$\K_{\Omega, f}$ satisfies the $\nu$ critical density properties $CD (\nu,c,\e,\eta)$ for $0<\nu<1/C_D^2$ and for every $f\in \mathcal{L}(\Omega)$. Here $C_D$ is the doubling constant.
\item[(B2)] The function $r\to\mu(B_r(x))$ is continuous.
\end{itemize}
\end{itemize} 
Then the family $\Kf$ satisfies the power decay property.
\end{thm}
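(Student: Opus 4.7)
The plan is to adapt the Krylov–Safonov-type iteration of \cite[Section 5]{DGL} to the non-homogeneous setting, leveraging the preparatory tools already established: Proposition~\ref{prop: critical dens alpha} (scaled critical density), Lemma~\ref{lemma raggio rho sharp} (radius control for balls with level-set density $\geq\nu$), and an $\e$-Besicovitch covering lemma of the type stated in \cite{DGL}. First, I would reduce case (B) to case (A): the proposition stated just before Proposition~\ref{prop: critical dens alpha} shows that, under (B1), the $\nu$-critical density property with $\nu<1/C_D^{2}$ already implies the double ball property $DB(c,\e,2\eta)$; hence in both (A) and (B) the hypotheses of Proposition~\ref{prop: critical dens alpha} and Lemma~\ref{lemma raggio rho sharp} hold, possibly after enlarging $\eta$.

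Next I would fix the parameters of the power decay. Let $M_1$, $\s$, $M_0$ be as in Lemma~\ref{lemma raggio rho sharp}, and choose $M$ so large that $(M_1/M)^{\s}$ is smaller than a threshold dictated by the Besicovitch selection; set $\eta_P:=K(1+4\eta K)$ and $\e_P:=\e c$, the value appearing in the second alternative of Lemma~\ref{lemma raggio rho sharp}. Given $u\in\Kf(B_{\eta_P R})$ with $\inf_{B_R}u\leq 1$ and $\Set{\eta_P R}{}<\e_P$, I would introduce the super-level sets
\[
A_k := \{x\in B_{R/2}\,:\, u(x)>M^k\},\qquad k\in\mathbb{N},
\]
and prove $\mu(A_k)\leq\gamma^{k}\mu(B_{R/2})$ for all $k\in\mathbb{N}$ by induction on $k$, with $\gamma\in(0,1)$ structural.

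The base case $k=0$ is immediate since $A_0\subseteq B_{R/2}$. For the inductive step, at every Lebesgue density point $x_0\in A_{k+1}$ either the ring condition (A2) or the continuity of $r\mapsto\mu(B_r(x_0))$ (B2) permits the selection of a smallest radius $\rho_{x_0}$ such that
\[
\mu\bigl(A_{k+1}\cap B_{\rho_{x_0}}(x_0)\bigr)\geq \nu\,\mu\bigl(B_{\rho_{x_0}}(x_0)\bigr),
\]
while on slightly larger concentric balls the reverse strict inequality holds. Lemma~\ref{lemma raggio rho sharp} applied with $\alpha=M^{k+1}$ then forces $\rho_{x_0}\leq (M_1/M^{k+1})^{\s}R$, arbitrarily small in $k$, and the reverse density on the enlarged ball guarantees that a substantial portion of a controlled dilation of $B_{\rho_{x_0}}(x_0)$ lies in $A_k\setminus A_{k+1}$. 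An $\e$-Besicovitch selection applied to $\{B_{\rho_{x_0}}(x_0)\}_{x_0}$ extracts a countable subfamily of bounded overlap covering $A_{k+1}$ up to a $\mu$-null set; summing the pointwise mass gain on each covering ball produces a recursion of the form $\mu(A_{k+1})\leq\gamma\,\mu(A_k)$, which together with the inductive hypothesis closes the argument.

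The hard part is the density-to-strict-inequality transition used to select $\rho_{x_0}$ and the bookkeeping on a controlled dilation of $B_{\rho_{x_0}}(x_0)$: this is where the two assumption sets really diverge. Under (A2), the quantitative ring condition $\omega(s)=o(\log^{-2}(1/s))$ is needed to offset the boundary loss inside the Besicovitch covering; under (B2), continuity of $r\mapsto\mu(B_r(x))$ yields the selection for free, but the overlap estimate forces the stronger critical-density threshold $\nu<1/C_D^{2}$ already exploited to recover the double ball property. Unifying the two regimes while keeping all the constants $M$, $\gamma$, $\e_P$, $\eta_P$ independent of $u$, $f$, $R$, $k$ will be the technically delicate point.
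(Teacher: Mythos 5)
Your high-level strategy matches the paper's: Besicovitch-type covering lemma, scaled critical density (Proposition~\ref{prop: critical dens alpha}), and radius control via Lemma~\ref{lemma raggio rho sharp}. But there is a genuine structural gap, and it is precisely the point you yourself flag as ``technically delicate.'' You try to prove the recursion $\mu(A_{k+1})\leq\gamma\,\mu(A_k)$ directly on the fixed half-ball $B_{R/2}$. The problem is that the covering balls $B_{\rho_{x_0}}(x_0)$ selected at density points $x_0\in A_{k+1}$ are not guaranteed to stay inside $B_{R/2}$: a center near $\partial B_{R/2}$ produces a ball sticking out, so that the ``substantial portion of a controlled dilation lies in $A_k$'' step has uncontrolled boundary loss, and the constant $\gamma$ cannot be closed uniformly in $k$. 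The paper resolves this with a device your proposal lacks: a \emph{shrinking} family of concentric balls $B_k$ with radii $t_k=T_kr$, chosen so that $r/2<T_kr\leq 3r/4$ and $T_{k+1}+\beta_1q^{k+2}=T_k$. The recursion is proved for $\mu(B_{k+1}\cap E_{k+2})\leq c(\nu)\,\mu(B_k\cap E_{k+1})$, not for $\mu(A_{k+1})\leq\gamma\,\mu(A_k)$, and the whole point of Step IV (combining the radius bound $r_h\leq(M_1/M^{k+2})^{\s}r$ from Lemma~\ref{lemma raggio rho sharp} with the H\"older continuity of $d$) is to show that the covering balls of $B_{k+1}\cap E_{k+2}$ fit inside $B_k$, eliminating boundary leakage by construction. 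Without this nested-ball mechanism, or an explicit substitute for it, your induction does not close.

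A second, smaller inaccuracy: your reduction of case (B) to case (A) is not what the paper does and does not suffice. It is true that (B1) with $\nu<1/C_D^2$ implies the double ball property, but the two cases still require \emph{different} covering lemmas in \cite{DGL}: \cite[Theorem 3.3]{DGL} (which needs the ring condition (A2)) versus \cite[Theorem 3.4]{DGL} (which uses continuity of $r\mapsto\mu(B_r(x))$ as in (B2)), with slightly different density constants ($\nu$ vs.~$\nu/C_D$) and a larger $\eta_P$ in case (B). These are not interchangeable, so (B) cannot simply be absorbed into (A). Finally, your choice $\eta_P=\theta=K(1+4\eta K)$ is at the boundary; the paper requires $\eta_P$ strictly larger (and doubled in case (B)) so that the enlarged balls $B_{\eta r_h}(x_h)$ for $r_h<3Kr$ remain inside $B_{\eta_P r}$, which is needed in Step~II.
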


\begin{proof}
First of all we define $\tau:=\max\{\nu,\delta\}$, by Remarks \ref{rmk eps crit density}, $\K_{\Omega,\lambda f}$ satisfies the critical density property $CD (\tau,c, \nu,\eta)$ . That said, throughout the proof we will write $\nu$ instead of $\tau$.\\
In order to prove the theorem under assumptions (A1), (A2) we consider $u\in\K_{\Omega,f}(B_{\eta_P r})$ and set 
\begin{equation*}
E_k=\{x\in B_{\eta_P r}:u(x)\geq M^k\}, \quad \text{for every } k\in \mathbb{N}
\end{equation*} 
and suppose 
\begin{equation}\label{eq:hyp crit dens}
B_{\eta_P r}\subset\Omega,\quad\inf_{B_r} u \leq 1\quad\text{and}\quad \Set{\eta_P r}{}<\e_P
\end{equation}
where $\eta_P, M>1$ and $0<\e_P<1$ are structural constant that will be soon determined.
We shall prove that there exists a structural constant $0<\omega<1$ such that 
\begin{equation*}
\mu(\{x\in B_{r/2}:u(x)>M^k\})\leq \omega^k\mu(B_{r/2})\quad\text{for every } k\in\mathbb{N}. 
\end{equation*}
Notice that the second inequality in \eqref{eq:hyp crit dens} implies 
\begin{equation}\label{normaf teo power decay}
\SetCentro{\rho}{x}{}<\e_P\quad \text{for every } B_{\rho}(x)\subseteq B_{\eta_P r}.
\end{equation}
We claim it is possible to construct a family of balls $B_k$ of radius $t_k$ and center $x_0$ such that $r=t_0>t_1>t_2>\dots> r/2$ and 
\begin{equation}\label{diseq : misura Ek Bk}
\mu(B_{k+1}\cap E_{k+2})\leq c(\nu)\mu(B_{k}\cap E_{k+1}),\quad c(\nu)<1,\quad k\in\mathbb{N}_0.
\end{equation}
In particular we will construct this family so that $t_k=T_kr$ where $T_k$ are defined by
\begin{equation}\label{eq def Tk}
\begin{cases}
T_k=T_1-\beta_1q^3\sum_{j=0}^{k-2}q^j,\quad k>2\\
T_2=3/4-\beta_1q^3\\
T_1=3/4
\end{cases}
\quad\text{i.e.}\quad
\begin{cases}
T_{k+1}=T_k-\beta_1q^{k+2},\quad k>1\\
T_1=3/4.\\
\end{cases}
\end{equation}
Here 
\begin{equation}\label{eq def q b1}
q:=1/M^{\s\alpha},\quad \beta_1:=(2K)^{\alpha-1}\beta M_1^{\s\alpha}(1+M_1^{\s})^{1-\alpha},
\end{equation}
$\s$, $M_1$ are defined in Lemma \ref{lemma raggio rho}, $\alpha,\;\beta$ are the constants that appearing in Definition \ref{def Holder cont} and $K$ is the constant in the quasi triangular inequality. 
Assuming the claim for a moment, from \eqref{diseq : misura Ek Bk}, we get
\begin{align*}
\mu(\{x\in B_{r/2}:u(x)>M^{k+2} \})&\leq \mu(\{x\in B_{t_{k+1}}:u(x)>M^{k+2}\})\\
&\leq (c(\nu))^{k+1}\mu(B_r)\\
&\leq (c(\nu))^{k+1}C_D\mu(B_{r/2}) \quad \text{for every } k\in\mathbb{N}.
\end{align*}
where $C_D$ is the doubling constant. Consider a positive integer $k_0$ such that $(c(\nu))^{k_0}C_D<1$, if we define $\tilde{M}=M^{k_0+2}$, from the last inequality we have
\begin{align*}
\mu(\{x\in B_{r/2}:u(x)>\tilde{M}^t \})&\leq \mu(\{x\in B_{r/2}:u(x)>M^{k_0+1+t}\})\\
&\leq (c(\nu))^{k_0}C_D(c(\nu))^t \mu(B_{r/2}) \\
&\leq (c(\nu))^t\mu(B_{r/2})\quad \text{for every } t\in\mathbb{N}.
\end{align*}
Replacing $\tilde{M},\; t$ with $M,\; k$ respectively we get the thesis.\\
We now explicitly define $\eta_P$ and $M$ (these specific choices will be motivated in the proof of the claim) such that
\begin{gather*}
\eta_P>\max\{K(3\eta K+1),\theta\}\\
M>\max\{M_0,\ol{M}\},
\end{gather*}
where $\ol{M}$ is a positive constant big enough to obtain $\max\Bigl\{\beta_1\frac{1}{\ol{M}^{2\s\alpha}}, \beta_1\frac{1}{\ol{M}^{3\s\alpha}}\sum_{j\in\mathbb{N}} \frac{1}{\ol{M}^{j\s\alpha}}\Bigr\}<\frac{1}{4}$ and $M_0$ and $\theta$ are defined in Proposition \ref{prop: critical dens alpha} and Lemma \ref{lemma raggio rho} respectively. We remark that the definitions of $M$ and $T_k$ imply
\begin{equation}\label{bound Tk}
\frac{1}{2}< T_k\leq \frac{3}{4}\quad \text{for every }k\in\mathbb{N}.
\end{equation}
To complete the proof of the Theorem \ref{teo: power decay}, we are left with the proof of claim \eqref{diseq : misura Ek Bk}. We will explicitly show it in the case $k=0$ and then for a generic $k\in\mathbb{N}$, for the sake of clarity we will subdivide each proof in five steps.\\ 
Proof of the claim for $k=0$

\begin{itemize}

\item{(STEP I)} Consider $t_1=T_1r=3/4r$. Since $B_1\cap E_2\subset B_r\subset B_{2r}\subset \Omega$ the result of Di Fazio et al. \cite[Theorem 3.3]{DGL} ensures the existence of a level $\nu$ covering $\mathcal{F}_1=\{B(x_h,r_h)\}$ of $B_1\cap E_2$ where $x_h$ are density points of $B_1\cap E_2$, $r_h<3Kr$ for every $h\in \mathbb{N}$ and 
\begin{equation}\label{diseq cover B1E2}
\nu=\frac{\mu(B_{r_h}(x_h)\cap B_1\cap E_2)}{\mu(B_{r_h}(x_h))}\leq \frac{\mu(B_{r_h}(x_h)\cap E_2)}{\mu(B_{r_h}(x_h))}.
\end{equation}

\item{(STEP II)} We show that    
\begin{equation*}
B_{r_h}(x_h)\subset E_1,\quad\text{for every } h\in\mathbb{N}.
\end{equation*}
Since $\eta_P>K(3\eta K+1)$ and $r_h<3Kr$ we have $B_{\eta r_h}(x_h)\subset B_{\eta_P r}$ so that $u\in \Kf(B_{\eta r_h}(x_h))$. By \eqref{diseq cover B1E2} and Proposition \ref{prop: critical dens alpha} it follows $\inf_{B_{r_h}(x_h)}u\geq M^2/M_0>M$ or $\SetCentro{\eta r_h}{x_h}{}\geq \e c M^2$. It suffices to choose 
$$\e_P=\e c$$ 
in \eqref{eq:hyp crit dens} and recall that, by definition, $M>M_0>1$ to exclude the latter alternative and get $B_{r_h}(x_h)\subset E_1$.

\item{(STEP III)} Now we prove $r_h<2Kr$ for all $h\in\mathbb{N}$. Suppose by contradiction that there exists a $j\in\mathbb{N}$ such that $r_j>2Kr$. Then $B_{2r}(x_j)\subset B_{2Kr}(x_j)\subset B_{r_j}(x_j)$. By Step II, $\inf_{B_{r_j}(x_j)}u\geq M^2/M_0> 1$ and so $\inf_{B_{2r}(x_j)}u> 1$. In addition, since $x_j$ is a density point of $B_1\cap E_2$, it follows that $x_j\in \ol{B_{t_1}}$. Finally, recalling that $t_1<r$, we have $B_r\subset B_{2Kr}(x_j)$ and consequently $\inf_{B_{r}}u> 1$. This contradicts \eqref{eq:hyp crit dens}.

\item{(STEP IV)} We  next show that  
\begin{equation*}
B_{r_h}(x_h)\subset B_{t_0},\quad\text{for every } h\in\mathbb{N}.
\end{equation*}
First of all notice that since $\eta_P>\theta$ we have $u\in\Kf(B_{\theta r})$. This, \eqref{diseq cover B1E2} and Step III allow us to apply Lemma \ref{lemma raggio rho sharp} with $y$, $\rho$ and $\alpha$ replaced by $x_h$, $r_h$ and $M^2$, obtaining 
\begin{equation*}
r_h\leq \biggl(\frac{M_1}{M^2}\biggr)^{\s}r \quad \text{or} \quad\Set{\theta r}{}\geq \e c.
\end{equation*}
Since $\e_P=\e c$ and \eqref{normaf teo power decay} holds, we conclude the first alternative take place. Now, if $z\in B_{r_h}(x_h)$, inequality \eqref{diseq: Holder cont} and the quasi triangular inequality imply
\begin{align*}
d(z,x_0)&\leq d(x_h,x_0)+\beta(d(x_h,z))^{\alpha}(d(x_h,x_0)+d(x_0,z))^{1-\alpha}\\
&\leq d(x_h,x_0)+(2K)^{\alpha-1}\beta(d(x_h,z))^{\alpha}(d(x_h,x_0)+d(x_h,z))^{1-\alpha}\\
&\leq t_1+(2K)^{\alpha-1}\beta\biggl(\frac{M_1}{M^2}\biggr)^{\s\alpha}r^{\alpha}\biggl(t_1+\biggl(\frac{M_1}{M^2}\biggr)^{\s}r\biggr)^{1-\alpha}.
\end{align*}
keeping in mind $t_1=T_1r$ we get
\begin{align*}
d(z,x_0)&\leq r\biggl(T_1+(2K)^{\alpha-1}\beta q^2 M_1^{\sigma\alpha}\bigl(T_1+M_1^{\sigma}q^{2/\alpha}\bigr)^{1-\alpha}\biggr)\\
&\leq r\biggl(T_1+(2K)^{\alpha-1}\beta q^2 M_1^{\sigma\alpha}\bigl(1+M_1^{\sigma}\bigr)^{1-\alpha}\biggr)\\
&\leq r(T_1+\beta_1 q^2)
\end{align*}
where $\beta_1$ e $q$ are the positive constants defined in \eqref{eq def q b1}. In virtue of our choice of $t_1$ and $M$ we have $T_1+\beta_1q^2<1$ and so, $B_{r_h}(x_h)\subset B_r$ concluding the proof of Step IV.

\item{(STEP V)}  By \cite[Theorem 3.3(iv)]{DGL} we have
\begin{equation*}
\mu(B_1\cap E_2)\leq c(\nu)\mu\biggl(\bigcup_{h\in\mathbb{N}}B_{r_h}(x_h)\biggr),
\end{equation*}
on the other hand Step II and Step VI imply $B_{r_h}(x_h)\subset B_0\cap E_1$ and hence
\begin{equation*}
c(\nu)\mu\biggl(\bigcup_{h\in\mathbb{N}}B_{r_h}(x_h)\biggr)\leq c(\nu)\mu(B_0\cap E_1), 
\end{equation*}
combining inequalities above we get \eqref{diseq : misura Ek Bk} for $k=0$.
\end{itemize}
Proof of the claim for a generic $k\in\mathbb{N}$
\begin{itemize}

\item{(STEP I)} Consider $t_{k+1}=T_{k+1}r$, where $T_k$ is defined in \eqref{eq def Tk}. Since $B_{k+1}\cap E_{k+2}\subset B_r\subset B_{2r}\subset \Omega$ the result of Di Fazio et al. \cite[Theorem 3.3]{DGL} ensures the existence of a level $\nu$ covering $\mathcal{F}_{k+1}=\{B_{r_h}(x_h)\}$ of $B_{k+1}\cap E_{k+2}$ where $x_h$ are density point of $B_{k+1}\cap E_{k+2}$, $r_h<3Kr$ for every $h\in \mathbb{N}$ and 
\begin{equation}\label{diseq cover B2E3}
\nu= \frac{\mu(B_{r_h}(x_h)\cap B_{k+1}\cap E_{k+2})}{\mu(B_{r_h}(x_h))}\leq \frac{\mu(B_{r_h}(x_h)\cap E_{k+1})}{\mu(B_{r_h}(x_h))}.
\end{equation}

\item{(STEP II)} We show that    
\begin{equation*}
B_{r_h}(x_h)\subset E_{k+1},\quad\text{for every } h\in\mathbb{N}.
\end{equation*}
Since $\eta_P>K(3\eta K+1)$ and $r_h<3Kr$ we have $B_{\eta r_h}(x_h)\subset B_{\eta_P r}$ so that $u\in \Kf(B_{\eta r_h}(x_h))$. From \eqref{diseq cover B2E3} and Proposition \ref{prop: critical dens alpha} it follows that $\inf_{B_{r_h}(x_h)}u\geq M^{k+2}/M_0$ or $\SetCentro{\eta r_h}{x_h}{}\geq \e c M^{k+2}$. By \eqref{normaf teo power decay}, the definition of $\e_P$, and our choice of $M$ we exclude the latter alternative and get $B_{r_h}(x_h)\subset E_{k+1}$.

\item{(STEP III)} Now we prove that $r_h<2Kr$ for all $h$. Suppose by contradiction that there exists a $j\in\mathbb{N}$ such that $r_j>2Kr$. Then $B_{2r}(x_j)\subset B_{2Kr}(x_j)\subset B_{r_j}(x_j)$. By step II $\inf_{B_{r_j}(x_j)}u\geq M^{k+2}/M_0> 1$ so that $\inf_{B_{2r}(x_j)}u> 1$. In addition, since $x_j$ is a density point of $B_{k+1}\cap E_{k+2}$, it follows that $x_j\in \ol{B_{t_{k+1}}}$. Finally, recalling that $t_{k+1}<r$, we have $B_r\subset B_{2Kr}(x_j)$ and consequently $\inf_{B_{r}}u> 1$, on the other hand  \eqref{eq:hyp crit dens} holds and we reach a contradiction.

\item{(STEP IV)} We  next show that  
\begin{equation*}
B_{r_h}(x_h)\subset B_k,\quad\text{for every } h\in\mathbb{N}.
\end{equation*}
First of all we notice that since $\eta_P>\theta$ we have $u\in\Kf(B_{\theta r})$. This, the second inequality in \eqref{diseq cover B2E3} and Step III allow us to apply Lemma \ref{lemma raggio rho sharp} with $y$, $\rho$ and $\alpha$ replaced by $x_h$, $r_h$ and $M^{k+2}$ obtaining 
\begin{equation*}
r_h\leq \biggl(\frac{M_1}{M^{k+2}}\biggr)^{\s}r \quad \text{or} \quad \Set{\theta r}{}\geq \e c.
\end{equation*}
By \eqref{normaf teo power decay} and the choice of $\e_P$ we made, we can conclude that the first alternative take place. Now, if $z\in B_{r_h}(x_h)$, inequality \eqref{diseq: Holder cont} and the quasi triangular inequality imply
\begin{align*}
d(z,x_0)&\leq d(x_h,x_0)+\beta(d(x_h,z))^{\alpha}(d(x_h,x_0)+d(x_0,z))^{1-\alpha}\\
&\leq d(x_h,x_0)+(2K)^{1-\alpha}\beta(d(x_h,z))^{\alpha}(d(x_h,x_0)+d(x_h,z))^{1-\alpha}\\
&\leq t_{k+1}+(2K)^{1-\alpha}\beta\biggl(\frac{M_1}{M^{k+2}}\biggr)^{\s\alpha}r^{\alpha}\biggl(t_{k+1}+\biggl(\frac{M_1}{M^{k+2}}\biggr)^{\s}r\biggr)^{1-\alpha}.
\end{align*}
Moreover, since $t_{k+1}=T_{k+1}r$ we have
\begin{align*}
d(z,x_0)&\leq r\biggl(T_{k+1}+(2K)^{1-\alpha}\beta q^{k+2}M_1^{\sigma\alpha}\bigl(T_{k+1}+ M_1^{\sigma}q^{(k+2)/\alpha}\bigr)^{1-\alpha}\biggr)\\
&\leq r\biggl(T_{k+1}+(2K)^{1-\alpha}\beta q^{k+2}M_1^{\sigma\alpha}\bigl(1+ M_1^{\sigma}\bigr)^{1-\alpha}\biggr)\\
&\leq r(T_{k+1}+\beta_1 q^{k+2})
\end{align*}
where $\beta_1$ e $q$ are the positive constant defined in \eqref{eq def q b1}. Keeping in mind \eqref{eq def Tk} and \eqref{bound Tk}, we have $T_{k+1}+\beta_1q^{k+2}=T_k<1$ and so $B_{r_h}(x_h)\subset B_r$ concluding the proof of Step IV.

\item{(STEP V)} One one hand by of \cite[Theorem 3.3(iv)]{DGL} we have
\begin{equation*}
\mu(B_{k+1}\cap E_{k+2})\leq c(\nu)\mu\biggl(\bigcup_{h\in\mathbb{N}}B_{r_h}(x_h)\biggr),
\end{equation*}
on the other hand Step II and Step VI imply $B_{r_h}(x_h)\subset B_{k}\cap E_{k+1}$ and hence
\begin{equation*}
c(\nu)\mu\biggl(\bigcup_{k\in\mathbb{N}}B_{r_h}(x_h)\biggr)\leq c(\nu)\mu(B_k\cap E_{k+1}), 
\end{equation*}
combining inequalities above we get \eqref{diseq : misura Ek Bk} for a generic $k\in\mathbb{N}$.
\end{itemize}

This proves the claim and completes the proof Theorem \ref{teo: power decay} under assumptions (A1) and (A2).\\
Now suppose hypotheses (B) holds, the proof proceeds exactly as before with $\eta_P:=2\max\{K(3 \eta K+1),\theta\}$; by using \cite[Theorem 3.4]{DGL} instead of \cite[Theorem 3.3]{DGL}. Moreover \eqref{diseq cover B1E2} and \eqref{diseq cover B2E3} have to be replaced by
\begin{equation*}
\frac{\nu}{C_D}\leq \frac{\mu(B_{r_h}(x_h)\cap B_1\cap E_2)}{\mu(B_{r_h}(x_h))}\leq \frac{\mu(B_{r_h}(x_h)\cap E_2)}{\mu(B_{r_h}(x_h))}
\end{equation*}
and
\begin{equation*}
\frac{\nu}{C_D}\leq \frac{\mu(B_{r_h}(x_h)\cap B_{k+1}\cap E_{k+2})}{\mu(B_{r_h}(x_h))}\leq \frac{\mu(B_{r_h}(x_h)\cap E_{k+1})}{\mu(B_{r_h}(x_h))}
\end{equation*}
respectively.
\end{proof}


\subsection{Proof of Abstract Harnack Inequality}
Our goal is to prove the following Harnack inequality
\begin{thm}[Harnack inequality]\label{Harnack}
Let $(Y,d,\mu)$ be a doubling quasi metric space, suppose $\K_{\Omega, f}$ satisfies the power decay property $PD (M,\gamma,\e_P,\eta_P) $  for every $f\in \mathcal{L}(\Omega)$.\\
Then, for every $B_{\eta R}(x_0)\subset\Omega $, if  $u\in \Kf(B_{\eta R}(x_0))$ is non negative and locally bounded, there exists a positive structural constant $C$ such that
\begin{equation*}
\sup_{B_R(x_0)} u\leq C\biggl(\inf_{B_R(x_0)}u+\SetCentro{\eta R}{x_0}{}\biggr),
\end{equation*}
where $\eta=2K(2K\eta_P+1)$ and $K$ is the constant in the quasi triangle inequality.
\end{thm}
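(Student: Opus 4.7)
The natural plan is to normalize $u$ and then iterate the power decay property against local boundedness. Given $u\in\Kf(B_{\eta R}(x_0))$ as in the statement, for each $\delta>0$ set
$$T_\delta := \inf_{B_R(x_0)} u + \e_P^{-1}\,\mathcal{S}_\Omega(B_{\eta R}(x_0),f) + \delta,$$
and $v := u/T_\delta$. The scaling axiom of Definition \ref{def KOmega} gives $v\in\K_{\Omega, f/T_\delta}$; the positive homogeneity of $\mathcal{S}_\Omega$ in its second entry together with the choice of $T_\delta$ yield $\inf_{B_R(x_0)} v \leq 1$ and $\mathcal{S}_\Omega(B_{\eta R}(x_0), f/T_\delta)<\e_P$. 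It therefore suffices to prove a structural bound $\sup_{B_R(x_0)} v \leq C_1$; letting $\delta\to 0^+$ then gives the theorem with $C = C_1\max\{1,\e_P^{-1}\}$.

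The structural bound I would prove by contradiction in the Krylov--Safonov / Caffarelli--Cabré style. Assume $\sup_{B_R(x_0)} v > M^{k_0}$ for a large $k_0$ to be fixed and pick $y_0\in B_R(x_0)$ with $v(y_0)>M^{k_0}$. The plan is to construct inductively a sequence $(y_k,r_k)$ with $r_k$ shrinking geometrically to zero, all $y_k$ lying in a fixed compact subset of $\Omega$, and $v(y_k)>M^{k_0+k}$; local boundedness of $u$ (hence of $v$) on such a compact set then contradicts $v(y_k)\to+\infty$. The inductive step rests on the power decay estimate
$$\mu\bigl(\{v>M^{k_0+k}\}\cap B_{r_k/2}(y_k)\bigr)\leq \gamma^{k_0+k}\mu(B_{r_k/2}(y_k)),$$
which is available because the order-preservation of $\mathcal{S}_\Omega$ in its first argument and the scaling axioms for $\Kf$ propagate the normalized hypotheses to every sub-ball $B_{\eta_P r_k}(y_k)\subset B_{\eta R}(x_0)$. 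Combining this with a density-point / covering argument in the spirit of Lemma \ref{lemma raggio rho sharp}, applied to an appropriate rescaling of $v$, I would locate inside $B_{r_k/2}(y_k)$ a sub-ball of radius $r_{k+1}\leq \lambda r_k$ (for some structural $\lambda<1$) that contains a point $y_{k+1}$ with $v(y_{k+1}) > M^{k_0+k+1}$.

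The explicit constant $\eta=2K(2K\eta_P+1)$ is dictated by this iteration geometry: two applications of the quasi triangular inequality ensure that for $y_k$ within distance $\lesssim 2KR$ of $x_0$ and $r_k\leq R$ the ball $B_{\eta_P r_k}(y_k)$ is contained in $B_{\eta R}(x_0)$, keeping the power decay hypothesis applicable throughout the iteration. The main obstacle will be precisely this inductive extraction: power decay only provides an upper bound on super-level sets, so one must marry it with a growth/covering lemma to produce the next iterate $y_{k+1}$ while simultaneously preserving the geometric containment $B_{\eta_P r_{k+1}}(y_{k+1})\subset B_{\eta R}(x_0)$ and the smallness of the data norm on the new ball. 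Both requirements should propagate through the iteration thanks to the order-preservation of $\mathcal{S}_\Omega$ and the two scaling properties in Definition \ref{def KOmega}, but verifying them at every step (while also tracking that $y_k$ never escapes the compact zone where local boundedness gives a uniform bound) is the genuinely delicate point.
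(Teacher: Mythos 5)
Your normalization step is exactly the paper's: set $T_\delta=\inf_{B_R(x_0)}u+\e_P^{-1}\mathcal{S}_\Omega(B_{\eta R}(x_0),f)+\delta$, pass to $v=u/T_\delta\in\K_{\Omega,f/T_\delta}$, and reduce to the structural bound $\sup_{B_R(x_0)}v\le C_1$ under $\inf_{B_R(x_0)}v\le 1$ and $\mathcal{S}_\Omega(B_{\eta R}(x_0),f/T_\delta)<\e_P$. After that you diverge. The paper establishes the structural bound in Proposition \ref{Harnack u,f small} via the weighted supremum $D=\sup_{B_R(z)}u(x)g(x,R)$ with $g(x,R)=((R-d(x,z))/R)^{\delta/\alpha}$: picking an approximate maximizer $x_*$, a single application of Lemma \ref{lemma Harnack} produces a nearby point $y\in B_\rho(x_*)$ with $u(y)\ge u(x_*)(1+1/M)$, the smallness estimate \eqref{stima distanza xstar} keeps $B_\rho(x_*)$ inside $B_R(z)$ so that $u(y)g(y,R)\le D$, and the H\"older property controls the ratio $g(x_*,R)/g(y,R)$; these forces cancel into the contradiction \eqref{D* bounded Harnack u,f small} once $\delta,\beta_*,k_0$ are fixed.

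The iteration you sketch instead has a genuine gap, and you flag it yourself as ``the genuinely delicate point.'' Your inductive step asks for a one-shot multiplicative gain of $M$: from $v(y_k)>M^{k_0+k}$, locate $y_{k+1}$ with $d(y_k,y_{k+1})\le\lambda r_k$ and $v(y_{k+1})>M^{k_0+k+1}$. But what the power decay property actually delivers, via the contradiction argument packaged as Lemma \ref{lemma Harnack}, is only a gain of $(1+1/M)$, on a ball of radius $\rho=\gamma^{k/q}R/c_1$ dictated by the current level $M^k$. To accumulate a full factor of $M$ you would have to stack on the order of $(\log M)/\log(1+1/M)$ such small gains, while simultaneously tracking containment of every shifted ball $B_{\eta_P r}(\cdot)$ in $B_{\eta R}(x_0)$, propagating the data-norm smallness to each sub-ball, and summing the displacements to show that the $y_k$ converge inside a fixed compact subset; none of this bookkeeping is carried out, and there is no lemma either in your sketch or in the paper that supplies the one-step factor-$M$ gain your induction relies on (Lemma \ref{lemma raggio rho sharp}, which you cite, gives a radius bound, not a growth step). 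The weighted-supremum device is precisely the mechanism the paper uses to circumvent this infinite chain: it converts the iteration into a single approximate-maximizer argument, where the weight $g$ automatically pushes the contradiction point away from $\partial B_R(z)$. To make your route rigorous you would have to first prove the growth lemma you invoke, which is where the real work of Proposition \ref{Harnack u,f small} actually lives.
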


We remark that in virtue to \cite[Theorem 2]{MS} we avoid requiring $d$ to be a H\"older quasi distance since there always exists an equivalent quasi distance $d'$ that satisfies this property. We prove Harnack inequality using the following Lemma and Proposition.

\begin{lem}\label{lemma Harnack}
Under the same hypotheses of Theorem \ref{Harnack}, if $\SetCentro{2\eta_P R}{z_0}{}\leq \e_P$, $u\in\Kf(B_{2\eta_P R}(z_0))$ is such that $\inf_{B_{2R}(z_0)}u\leq 1$, $u(x_0)\geq M^k$ and $B_{2\rho}(x_0)\subset B_R(z_0)$, then
\begin{equation}\label{tesi lemma Harnack}
\sup_{B_{\rho}(x_0)}u\geq u(x_0)\biggl(1+\frac{1}{M}\biggr).
\end{equation}
Here $x_0\in B_R(z_0)$, $k\geq 2$, $\rho=\frac{\gamma^{k/q}R}{c_1}$, $q=\log_2(C_D)$, $c_1<\biggl(\frac{\gamma^{1/q}(1-\gamma)^{1/q}}{C_D^{1/q}4K\eta_P}\biggr)$, $C_D$ is the doubling constant and $K$ appears in the quasi triangle inequality.
\end{lem}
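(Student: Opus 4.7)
The plan is to argue by contradiction. Assume $\sup_{B_\rho(x_0)} u < (1+1/M) u(x_0)$, and then derive a measure-theoretic contradiction by applying the power decay property (Definition \ref{def:power decay}) twice — once to a reflected auxiliary function built from $u$, and once to $u$ itself — and finally letting the doubling property collide the two estimates.

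First I construct the auxiliary function $v := (1+1/M) u(x_0) - u$ on $B_\rho(x_0)$. Under the contradiction hypothesis $v \geq 0$, so Definition \ref{def KOmega} (with $\tau = (1+1/M)u(x_0)$, $\lambda = 1$) gives $v \in \mathbb{K}_{\Omega, -f}(B_\rho(x_0))$. Rescaling produces $w := Mv/u(x_0) \in \mathbb{K}_{\Omega, -Mf/u(x_0)}(B_\rho(x_0))$ with $w(x_0) = 1$. Setting $r := \rho/\eta_P$, the ball $B_{\eta_P r}(x_0) = B_\rho(x_0)$ sits inside $B_R(z_0) \subset B_{2\eta_P R}(z_0)$, and from the scaling of $\mathcal{S}_\Omega$ and order-preservation one gets
\[
\mathcal{S}_\Omega(B_{\eta_P r}(x_0), -Mf/u(x_0)) = \frac{M}{u(x_0)}\mathcal{S}_\Omega(B_{\eta_P r}(x_0), f) \leq \frac{\e_P}{M^{k-1}} \leq \e_P
\]
since $k \geq 2$. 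Power decay with $j = 1$ applied to $w$, after translating $w > M$ into $u < u(x_0)/M$, yields
\[
\mu(\{x \in B_{r/2}(x_0): u(x) \geq u(x_0)/M\}) \geq (1-\gamma)\mu(B_{r/2}(x_0)).
\]

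Second, I apply power decay to $u$ itself on $B_{2\eta_P R}(z_0)$, permitted by $\inf_{B_{2R}(z_0)} u \leq 1$, at level $j = k-1$: $\mu(\{u > M^{k-1}\} \cap B_R(z_0)) \leq \gamma^{k-1} \mu(B_R(z_0))$. Since $u(x_0) \geq M^k$, the inclusion $\{u \geq u(x_0)/M\} \subset \{u > M^{k-1}\}$ (modulo a strict/nonstrict technicality) and $B_{r/2}(x_0) \subset B_R(z_0)$ give
\[
(1-\gamma) \mu(B_{r/2}(x_0)) \leq \gamma^{k-1} \mu(B_R(z_0)).
\]
Using $x_0 \in B_R(z_0)$, the quasi-triangle yields $B_R(z_0) \subset B_{2KR}(x_0)$, and doubling gives $\mu(B_R(z_0)) \leq C_D (4KR/r)^q \mu(B_{r/2}(x_0))$. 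Substituting $r = \gamma^{k/q} R/(c_1\eta_P)$ collapses the inequality to $(1-\gamma)\gamma \leq C_D (4K\eta_P c_1)^q$, i.e.\ $c_1 \geq \gamma^{1/q}(1-\gamma)^{1/q}/(C_D^{1/q} 4K\eta_P)$, which directly contradicts the hypothesis on $c_1$.

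The main obstacle is the strict-versus-nonstrict issue in the combination step: the power decay as stated controls only open level sets $\{u > M^j\}$ at integer heights, while the set produced by the first step is of the closed form $\{u \geq u(x_0)/M\}$ with threshold exactly $M^{k-1}$ when $u(x_0) = M^k$. Obtaining the sharp exponent $\gamma^{k-1}$ — not $\gamma^{k-2}$ — is what makes the scaling $\rho \sim \gamma^{k/q} R$ in the statement work with the given bound on $c_1$, so this technicality must be handled either by an approximation argument (continuity of $\mu$ from above) or by a slight strengthening of the hypothesis. The rest is careful tracking of the constants $C_D$, $K$, $\eta_P$ through the doubling inequality.
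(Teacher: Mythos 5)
Your proof is correct and takes essentially the same route as the paper's: both build the auxiliary function $w = M+1 - \frac{M}{u(x_0)}u \in \K_{\Omega,-Mf/u(x_0)}$, apply the power decay property once to $w$ at level $1$ on $B_{\rho/(2\eta_P)}(x_0)$ and once to $u$ at level $k-1$ on $B_R(z_0)$, and then use the doubling inequality with $B_R(z_0)\subset B_{2KR}(x_0)$ to force $c_1 \geq \gamma^{1/q}(1-\gamma)^{1/q}/(C_D^{1/q}4K\eta_P)$, contradicting the hypothesis on $c_1$. The strict-versus-nonstrict level-set issue you flag is genuinely present (the paper silently uses $A_1=\{u\geq M^{k-1}\}$ while power decay controls $\{u>M^{k-1}\}$), but it is elided in the paper's own argument in exactly the same way, so your explicit acknowledgment of this gap is, if anything, a small improvement in rigor over the source.
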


\begin{proof}
We shall prove the statement by contradiction. Suppose \eqref{tesi lemma Harnack} is not true and define 
\begin{gather*}
A_1:=\{x\in B_R(z_0):u(x)\geq M^{k-1} \},\\
A_2:=\{x\in B_{\rho/(2\eta_P)}(x_0):w(x)\geq M \}\\
\intertext{where}
w(x):=\frac{u(x_0)\bigl(1+\frac{1}{M}\bigr)-u(x)}{\frac{u(x_0)}{M}}=M+1-\frac{M}{u(x_0)}u(x)\in \K_{\Omega,-\frac{Mf}{u(x_0)}}(B_{\rho}(x_0)).
\end{gather*}
Since $w(x_0)=1$, $\inf_{B_{\rho/\eta_P}(x_0)}w(x)\leq 1$. Moreover $\SetCentro{\rho}{x_0}{-\frac{M}{u(x_0)}}= \SetCentro{\rho}{x_0}{}\frac{M}{u(x_0)}\leq \e_P M^{-k+1}$, so the power decay property for $\K_{\Omega,f} $ and $\K_{\Omega,\lambda f}$ implies respectively
\begin{equation*}
\mu(A_2)\leq \gamma \mu(B_{\rho/(2\eta_P)}(x_0))\quad\text{and}\quad\mu(A_1)\leq \gamma^{k-1} \mu(B_R (z_0)).
\end{equation*}
Recalling that $B_{\rho/(2\eta_P)}(x_0)\subset B_{\rho}(x_0)\subset B_R(z_0)$ we can show the inclusion $B_{\rho/(2\eta_P)}\subset A_1\cup A_2$. Indeed if $x\in B_{\rho/(2\eta_P)}$ but $x\notin A_1$ then $u(x)<M^{k-1}$ so $w(x)\geq M$ and hence $x\in A_2$, vice versa if $x\in B_r(z_0)$ but $x\notin A_2$ then $w(x)<M$ so $u(x)>u(x_0)\geq M^k$ and hence $x\in A_1$. Consequently we estimate the measure of $B_{\rho/(2\eta_P)}(x_0)$ by
\begin{equation*}
\mu(B_{\rho/(2\eta_P)}(x_0))\leq \mu(A_1)+\mu(A_2)\leq \gamma^{k-1}\mu(B_R(z_0))+\gamma\mu(B_{\rho/(2\eta_P)}(x_0)).
\end{equation*}
Now, since $B_{\rho/(2\eta_P)}(x_0)\subset B_R(z_0)\subset B_{2KR(x_0)}$, recalling definition \ref{doubling} the last inequality becomes
\begin{equation*}
\mu(B_{\rho/(2\eta_P)}(x_0))\leq \biggl(\gamma^{k-1}C_D\biggl(\frac{4KR\eta_P}{\rho}\biggr)^q +\gamma\biggr)\mu(B_{\rho/2\eta_P}(x_0)).
\end{equation*}
where $q=\log_2 C_D$. From the strict positiveness of the measure of $B_{\rho/(2\eta_P)}(x_0)$ and the definition of $\rho$ and $c_1$ given in the statement, we get
\begin{equation*}
1-\gamma\leq \gamma^{k-1}C_D\biggl(\frac{4KR\eta_P}{\rho}\biggr)^q=C_D(4K\eta_P c_1)^q\frac{\gamma^{k-1}}{\gamma^k}=\frac{C_D}{\gamma}(4K\eta_P c_1)^q
\end{equation*}
that is equivalent to $c_0:=\frac{C_D}{\gamma(1-\gamma)}(4K\eta_P c_1)^q\geq 1$. On the other hand, since $c_1<\Bigl(\frac{\gamma^{1/q}(1-\gamma)^{1/q}}{C_D^{1/q}4K\eta_P}\Bigr)$ we have $c_0<1$ reaching a contradiction.
\end{proof}

\begin{prop}\label{Harnack u,f small}
Suppose the same hypotheses of Theorem \ref{Harnack} are satisfied and assume $\inf_{B_R(x_0)}u<1$ and $\SetCentro{\eta R}{x_0}{}<\e_P$, then there exists a positive structural constant $C$ such that
\begin{equation*}
\sup_{B_R(x_0)} u\leq C.
\end{equation*}
\end{prop}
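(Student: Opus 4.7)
I would argue by contradiction: assuming no structural $C$ works, I would iterate Lemma~\ref{lemma Harnack} to build a sequence $(y_m)_{m\geq 1}$ along which $u(y_m)\to+\infty$ while $(y_m)$ remains in a compact subset of $\Omega$, contradicting the local boundedness of $u$.

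First I set up Lemma~\ref{lemma Harnack} with centre $z_0:=x_0$ and auxiliary radius $R':=\eta R/(2\eta_P)$. Since $\eta=2K(2K\eta_P+1)\geq 2\eta_P$, I have $B_R(x_0)\subset B_{R'}(z_0)$ and $B_{2\eta_P R'}(z_0)=B_{\eta R}(x_0)\subset\Omega$; the assumption $\inf_{B_R(x_0)}u<1$ gives $\inf_{B_{2R'}(z_0)}u\leq 1$, and order preservation of $\mathcal{S}_\Omega$ turns $\mathcal{S}_\Omega(B_{\eta R}(x_0),f)<\e_P$ into $\mathcal{S}_\Omega(B_{2\eta_P R'}(z_0),f)\leq\e_P$. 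Suppose now $u(y_1)\geq M^{k_1}$ for some $y_1\in B_R(x_0)$ and some large $k_1\geq 2$ (if no such $k_1$ exists then $\sup_{B_R(x_0)}u\leq M^{k_1}$ and we are done). Recursively, given $y_m\in B_{R'}(z_0)$ with $u(y_m)\geq M^{k_1}(1+1/M)^{m-1}$, let $k_m$ be the largest integer with $M^{k_m}\leq u(y_m)$, set $\rho_m:=\gamma^{k_m/q}R'/c_1$, and provided $B_{2\rho_m}(y_m)\subset B_{R'}(z_0)$, apply Lemma~\ref{lemma Harnack} to produce $y_{m+1}\in B_{\rho_m}(y_m)$ with $u(y_{m+1})\geq u(y_m)(1+1/M)$. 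From $u(y_m)\geq M^{k_1}(1+1/M)^{m-1}$ one gets $k_m-k_1\geq(m-1)\log(1+1/M)/\log M$, so $\rho_m$ decays geometrically in $m$.

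To verify the containment $B_{2\rho_m}(y_m)\subset B_{R'}(z_0)$ at every step, set $D_m:=d(y_1,y_m)$ and use the Hölder continuity of $d$ to obtain the recursion
\begin{equation*}
D_m\leq D_{m-1}+\beta\rho_{m-1}^{\alpha}(D_{m-1}+D_m)^{1-\alpha}.
\end{equation*}
As long as $D_m$ stays below a fixed threshold depending on $R,K,\eta,\eta_P$, this simplifies to $D_m-D_{m-1}\leq C_1\rho_{m-1}^{\alpha}$ for a structural constant $C_1$; summing and choosing $k_1$ so large that $C_1\sum_{j\geq 1}\rho_j^{\alpha}$ is arbitrarily small (possible by geometric decay of $\rho_j$) closes the bootstrap. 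The quasi-triangle inequality $d(y_m,x_0)\leq K(D_m+R)$ then confines all $y_m$ to a fixed subset of $B_{R'}(z_0)\subset\Omega$ on which $u$ is bounded by local boundedness, contradicting $u(y_m)\to+\infty$.

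The main obstacle is the displacement estimate in a quasi-metric space: a naive iteration of the quasi-triangle inequality would introduce an exponential factor $K^{m-1}$ capable of overwhelming the geometric decay of $\rho_m$. The Hölder continuity of $d$ together with the bootstrap on $D_m$ replaces this exponential by the summable series $\sum\rho_m^{\alpha}$, exactly as in Step~IV of Theorem~\ref{teo: power decay}; this series can be made as small as desired by choosing $k_1$ sufficiently large, which is the one free parameter at our disposal.
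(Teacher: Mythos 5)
Your proposal is correct but takes a genuinely different route from the paper. The paper proves the bound via a Caffarelli--Cabr\'e style weighted supremum: it introduces the weight $g(x,R)=((R-d(x,z))/R)^{\delta/\alpha}$, which vanishes on $\partial B_R(z)$, defines $D=\sup_{B_R(z)}u\,g$, and shows $D$ is bounded by a structural constant by applying Lemma~\ref{lemma Harnack} \emph{once} at an almost-maximizer $x_*$ and letting $D^*\to D$; the exponent $\delta$ (chosen so that $1/M=\gamma^{\delta/q}$) is calibrated precisely so that the radius $\rho$ produced by the lemma is absorbed by the decay of $g$ near the boundary, and H\"older continuity of $d$ enters through the Lipschitz-type estimate for $g$. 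Your route is the ``unrolled'' iterative version of the same mechanism: instead of encoding the geometric decay of $\rho$ in a weight, you iterate the lemma directly, produce a sequence $y_m$ with $u(y_m)\to+\infty$, and control the cumulative displacement $D_m=d(y_1,y_m)$ by a bootstrap using H\"older continuity, in the same spirit as Step~IV of Theorem~\ref{teo: power decay}. Both arguments must defuse the same two difficulties --- the potential $K^m$ blow-up from iterating the quasi-triangle inequality, handled in both cases by H\"older continuity of $d$, and the finiteness of the relevant supremum coming from local boundedness of $u$ on a proper sub-ball of $B_{\eta R}(x_0)$ --- so neither is intrinsically stronger; the paper's weight packages the infinite iteration into a single step and avoids the bootstrap and the series $\sum_m\rho_m^{\alpha}$, while your version avoids having to engineer $g$ and $\delta$. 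Two small technicalities to tidy up if you write this out: the supremum in Lemma~\ref{lemma Harnack} need not be attained, so one should extract $y_{m+1}$ with, say, $u(y_{m+1})\geq u(y_m)(1+\frac{1}{2M})$ and adjust the exponents accordingly; and the bootstrap on $D_m$ has to be closed carefully (for instance by considering the first index at which the threshold $T$ would be exceeded and noting that the increment there is still $O(\rho_m^{\alpha})$), which your sketch indicates but does not spell out.
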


\begin{proof}
Consider $B_R(z)$ with $z\in B_R(x_0)$ and define
\begin{gather*}
D:=\sup_{x\in B_R(z)} u(x)g(x,R)\\
\intertext{where}
g(x,R):=\biggl(\frac{R-d(x,z)}{R}\biggr)^{\delta/\alpha},
\end{gather*}
$\delta$ is a structural constant that will be soon defined and $\alpha$ is the exponent in the H\"older property for $d$ (see Definition \ref{def Holder cont}).
We claim that $D$ is bounded from above by a structural constant C. Deferring the proof of the claim for a moment we have
\begin{equation}\label{tesi2 Harnack u,f small}
u(x)\leq C\biggl(\frac{R}{R-d(x,z)}\biggr)^{\delta/\alpha}, \quad\text{for all } x\in B_R(z) \text{ and for every } z\in B_R(x_0),
\end{equation}
thus the thesis follows taking $x=z$ in \eqref{tesi2 Harnack u,f small}.\\
Hence we are left with the proof of the claim, to that aim choose
\begin{gather*}
\delta>0\quad\text{such that }\frac{1}{M}=\gamma^{\delta/q},\\
\beta_*>2(2K)^{1-\alpha}\beta\Bigl(1-\bigl(1+\frac{1}{M}\bigr)^{-\alpha/\delta}\Bigr)^{-1}>2\beta(2K)^{1-\alpha},\\
k_0\in\mathbb{N},\quad k_0>\frac{q}{\log \gamma}\log (c_1(2^{\frac{1}{1-\alpha}}-1)),\\
\end{gather*} 
where $M,\;\rho,\; \gamma$ and $c_1$ are defined in the statement of Lemma \ref{lemma Harnack}, $\beta,\; \alpha$ is as in Definition \ref{def Holder cont}, and $q=\log_2 C_D$. Notice that $k_0$ is a structural constant whose definition implies $\bigl(1+\frac{\rho}{R}\bigr)^{1-\alpha}=\bigl(1+\frac{\gamma^{k/q}}{c_1}\bigr)^{1-\alpha}<2$ for every $k\geq k_0$ and since $u$ is non negative and locally bounded, $+\infty>D\geq 0$. If $D>0$ pick $D^*\in(0,D)$, it suffices to show that $D^*$ is bounded from above by a structural constant $C$ to prove the claim. Since $u\geq 0$ is not identically null, there exist $x_*\in B_R(z)$ such that $D^*<u(x_*)g(x_*,R)$, if $u(x_*)<1$ we are done otherwise choose $k\in \mathbb{Z}$ such that $M^k\leq u(x_*)<M^{k+1}$. For clarity sake we consider three different cases.\\
CASE 1) If $k\leq k_0$ then
\begin{equation*}
D^*<M^{k+1}g(x_*,R)\leq M^{k_0+1}.
\end{equation*}\\
CASE 2) If $k>k_0$ and $\frac{D^*}{M} c_1^{\delta}<\beta_*^{\delta/\alpha}$, clearly $D^*$ is bounded above by $\frac{M\beta_*^{\delta/\alpha}}{c_1^{\delta}}$.\\
CASE 3) $k>k_0$ and $\frac{D^*}{M} c_1^{\delta}\geq\beta_*^{\delta/\alpha}$. We will show that this is never the case.\\
For $\rho=\frac{\gamma^{k/q}R}{c_1}$ as in Lemma \ref{tesi lemma Harnack}, we have
\begin{equation*}
1\geq g(x_*,R)>\frac{D^*}{M^{k+1}}=\frac{D^*}{M}(\gamma^k)^{\delta/q}=\frac{D^*}{M}\biggl(c_1\frac{\rho}{R}\biggr)^{\delta},
\end{equation*}
hence, combining inequalities above and the definition of $g$ we compute
\begin{equation}\label{stima distanza xstar}
d(x_*,z)<R-\beta_*R^{1-\alpha}\rho^{\alpha}.
\end{equation}
Now, if $y\in B_{\rho}(x_*)$, by the H\"older property and the quasi triangular inequality, the definition of $k_0$ and $\beta_*$, for every $k\geq k_0$ we have
\begin{align*}
d(y,z)&\leq d(z,x_*)+(2K)^{1-\alpha}\beta\bigl(d(x_*,y)\bigr)^{\alpha}\bigl(d(x_*,y)+d(z,x_*)\bigr)^{1-\alpha}\\
&\leq R-\beta_*R^{1-\alpha}\rho^{\alpha}+(2K)^{1-\alpha}\beta\rho^{\alpha}(\rho+R)^{1-\alpha}\\
&\leq R-\beta_*R^{1-\alpha}\rho^{\alpha}+2(2K)^{1-\alpha}\beta R^{1-\alpha}\rho^{\alpha}\\
&<R, 
\end{align*}
hence 
\begin{equation}\label{inclusione Harnack u,f small}
B_{\rho}(x^*)\subset B_R(z).
\end{equation}
We can apply Lemma \ref{lemma Harnack} with $R$, $z_0$ and $x_0$ replaced by $KR$, $z_0$ and $x_*$ respectively to obtain 
\begin{equation}\label{app lemma Harnack}
\sup_{B_{\rho}(x_*)}u\geq u(x_*)\bigl(1+\frac{1}{M}\bigr)>\frac{D^*}{g(x_*,R)}\bigl(1+\frac{1}{M}\bigr).
\end{equation}
Indeed since $B_{2KR\eta_P}(z)\subset B_{\eta R}(x_0)$, we get $u\in \Kf(B_{2KR\eta_P}(z))$; moreover $u(x_*)\geq M^k$ and $\SetCentro{2KR\eta_P}{z}{}<\e_P$. Thus all the hypotheses of Lemma \ref{lemma Harnack} are satisfied.\\
By \eqref{inclusione Harnack u,f small}, for $y\in B_{\rho}(x_*)$
 \begin{equation}\label{dis 2 Harnack u,f small}
 \sup_{B_{\rho}(x_*)}u\leq D\sup_{y\in B_{\rho}(x_*)}\frac{1}{g(y,R)}=\frac{D}{g(x_*,R)}\sup_{y\in B_{\rho}(x_*)}\frac{g(x_*,R)}{g(y,R)}.
 \end{equation}
Moreover by the H\"older property, the quasi triangular inequality and \eqref{stima distanza xstar} we have
\begin{align*}
\Bigl(\frac{g(x_*,R)}{g(y,R)}\Bigr)^{\alpha/\delta}&=\frac{R-d(z,x_*)}{R-d(y,z)}\\
&\leq \frac{R-d(z,x_*)}{R-\bigl(d(z,x_*)+\beta(2K)^{1-\alpha}\rho^{\alpha}(d(z,x_*)+\rho)^{1-\alpha}\bigr)}\\
&\leq \frac{1}{1-\frac{\beta(2K)^{1-\alpha}\rho^{\alpha}(R+\rho)^{1-\alpha}}{\beta_*\rho^{\alpha}R^{1-\alpha}}}\\
&\leq\frac{1}{1-\frac{2\beta(2K)^{1-\alpha}}{\beta_*}}
\end{align*}
Combining \eqref{app lemma Harnack}, \eqref{dis 2 Harnack u,f small} and the inequality above we obtain
\begin{equation}\label{D* bounded Harnack u,f small}
D^*<D\Bigl(\frac{M}{1+M}\Bigr)\Bigl(\frac{\beta_*}{\beta_*-2\beta(2K)^{1-\alpha}}\Bigr)^{\delta/\alpha}.
\end{equation}
hence taking the limit for $D^*\to D$ in \eqref{D* bounded Harnack u,f small} we find
$1<\Bigl(\frac{M}{1+M}\Bigr)\Bigl(\frac{\beta_*}{\beta_*-2\beta(2K)^{1-\alpha}}\Bigr)^{\delta/\alpha}$ from which we get
\begin{align*}
\beta_*\Bigl(\Bigl(\frac{1+M}{M}\Bigr)^{\alpha/\delta}-1\Bigr)-2\beta(2K)^{1-\alpha}\Bigl(\frac{1+M}{M}\Bigr)^{\alpha/\delta}<0\\
\beta_*< 2\beta(2K)^{1-\alpha}\Bigl(1-\Bigl(\frac{M+1}{M}\Bigr)^{-\alpha/\delta}\Bigr)^{-1}
\end{align*}
which is in contrast with the previous choice of $\beta_*$.\\
\end{proof}

\begin{proof}{(Theorem \ref{Harnack})}
It suffices to prove $\sup_{B_R(x_0)}u\leq CM $ for every $M\geq \inf_{B_R(x_0)}u+\frac{\SetCentro{\eta R}{x_0}{}}{\e_P}+\delta$ for every $\delta>0$. Consider 
\begin{equation*}
\quad\tilde{u}:=\frac{u}{M}\in\K_{\Omega,\tilde{f}},\quad\text{where } \tilde{f}:=\frac{f}{M}
\end{equation*}
clearly $\inf_{B_R(x_0)}\tilde{u}\leq 1$ and $\mathcal{S}(B_{\eta R}(x_0),\tilde{f})\leq \e_P$, so that by Proposition \ref{Harnack u,f small} we find $\sup_{B_R(x_0)}u\leq CM$, hence
\begin{align*}
\sup_{B_R(x_0)}u &\leq C\biggl(\inf_{B_R(x_0)}u+\frac{\SetCentro{\eta R}{x_0}{}}{\e_P} +\delta \biggr)\\
&\leq \frac{C}{\e_P}\biggl(\inf_{B_r(x_0)}u+\SetCentro{\eta R}{x_0}{}+\delta \biggr).
\end{align*}
If we let $\delta\to 0^+$ we get the thesis.
\end{proof}

\section{Application to Grushin type operators}

Let $\Omega\subseteq\mathbb{R}^2$ be open (with respect to the Euclidean distance) and consider the measure space $(\Omega,\mathscr{L})$, where $\mathscr{L}$ is the Lebesgue measure, and the second order linear operator
\begin{equation}\label{def L}
L:=a_{11}(x_1,x_2)X^2+a_{22}(x_1,x_2)Y^2+2a_{12}(x_1,x_2)YX,
\end{equation}
where $a_{ij}:\Omega\to\mathbb{R}$ are measurable functions and
\begin{equation}\label{def Grushin vf}
X:=\partial_{x_1},\quad Y:=x_1\partial_{x_2}
\end{equation}
are Grushin vector fields. Moreover we assume there exist positive constants, called ellipticity constants, $0<\lambda\leq \Lambda$ such that for every $\xi=(\xi_1,\xi_2)\in \R^2$ we have
\begin{equation}\label{grushin operator ellipticity constant}
\lambda |\xi|^2\leq a_{11}\xi_1^2+a_{22}\xi_2+2a_{12}\xi_1\xi_2\leq\Lambda|\xi|^2.
\end{equation}

Using the theory presented in Section \ref{sect abstract H} we will be able to prove the Harnack inequality for non negative classical solution of equations of the type
\begin{equation*}\label{Lu=f}
Lu=x_1^2f
\end{equation*}
with $f\in \mathcal{F}(\Omega)=\{f:\Omega\to\mathbb{R}, \text{measurable and bounded}\}$. We highlight that this result is an improvement of the one obtained in \cite{AM} where the homogeneous case $Lu=0$ is considered, nevertheless we make use of many results and ideas there developed. 
In accordance with the notation of Section \ref{sect abstract H}, for every measurable set $A\subset\Omega$ we define
\begin{gather*}
\mathcal{S}_{\Omega}(A,f):=\diam{A}\|x_1f\|_{L^2(A)},\\
\mathcal{L}(\Omega)=\{ f\in \mathcal{F}(\Omega): \,
\mathcal{S}_{\Omega}(B_r(x),f)<+\infty, \,\, \forall B_r(x)\subseteq \Omega \}\\
\mathbb{K}_{\Omega, f}:=\{u\in C^2(\Omega)\cap C(\ol{\Omega}):\; Lu=x_1^2f,\;u\geq 0\},\, \forall f \in \mathcal{L}(\Omega) \\
\end{gather*}
where $\diam{A}$ denotes the Euclidean diameter of $A$. Since the operator $L$ is linear and with no zero-order term,
if $u$ is a classical solution to $Lu=x_1^2f$, the function $\tau-\lambda u$ solves  $L(\tau-\lambda u)=-\lambda x_1^2f$, for every $\lambda,\;\tau\in\mathbb{R}$. So that the definition of $\mathbb{K}_{\Omega, f}$ is coherent with Definition \ref{def KOmega}.

\subsection{Grushin metric, sublevel sets and useful results}

We recall some useful and well known facts about the Carnot–Carathéodory metric $d_{CC}$ induced by vector fields \eqref{def Grushin vf} paying particular attention to the structure of balls $B_{CC}$ associated to this metric.

Using results proved by Franchi and Lanconelli in \cite{FL} it is possible to describe the structure of balls $B_{CC}$ by means of boxes
\begin{equation*}
\text{Box}(x,r):=]x_1-r,x_1+r[\;\times\;]x_2-r(r+|x_1|),x_2+r(r+|x_1|)[ .
\end{equation*}

\begin{thm}[Structure Theorem I, \cite{AM} Theorem 3.2]\label{thm structure 1}
There exists a structural constant $C_C>1$ such that 
\begin{equation*}
\text{Box}(x, C_C^{-1}r)\subset B_{CC}(x,r)\subset \text{Box}(x, C_Cr).
\end{equation*}
\end{thm}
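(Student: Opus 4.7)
The plan is to pass between boxes and CC-balls by analyzing admissible horizontal curves. Any CC-admissible curve $\gamma:[0,T]\to\R^2$ of length $T$ can be written as $\gamma'(t)=\alpha(t)X(\gamma(t))+\beta(t)Y(\gamma(t))$ with $\alpha(t)^2+\beta(t)^2\le 1$ a.e., i.e.\ $\gamma_1'(t)=\alpha(t)$ and $\gamma_2'(t)=\beta(t)\gamma_1(t)$. The outer inclusion $B_{CC}(x,r)\subset\text{Box}(x,C_Cr)$ is the easier half and in fact holds already with $C_C=1$: for any $y$ reached in time $T<r$ one has $|y_1-x_1|\le T<r$ and the pointwise bound $|\gamma_1(t)|\le|x_1|+T$, hence $|y_2-x_2|\le T(|x_1|+T)\le r(r+|x_1|)$.

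For the inner inclusion $\text{Box}(x,C_C^{-1}r)\subset B_{CC}(x,r)$, the plan is to exhibit an admissible curve joining $x$ to any prescribed $y\in\text{Box}(x,C_C^{-1}r)$ of length less than $r$. I would split into two regimes. When $|x_1|\ge r/C_C$, a two-leg path suffices: first move along $X$ from $(x_1,x_2)$ to $(y_1,x_2)$ (length $|y_1-x_1|$), then along $Y$ from $(y_1,x_2)$ to $(y_1,y_2)$ (length $|y_2-x_2|/|y_1|$). Since $|y_1|\ge|x_1|-|y_1-x_1|$ is comparable to $|x_1|$ and $|y_2-x_2|<(r/C_C)(r/C_C+|x_1|)$ by the box constraint, the total length is a structural multiple of $r/C_C$; taking $C_C$ sufficiently large makes it strictly less than $r$. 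When $|x_1|<r/C_C$, the field $Y$ nearly degenerates, so I would use a three-leg detour: first push $x_1$ along $X$ to an intermediate value $\xi$ with $|\xi|$ of order $\max(|x_1|,\sqrt{|y_2-x_2|})$, next slide along $Y$ at speed $|\xi|$ to cover the entire $x_2$-displacement, and finally return along $X$ to $y_1$. The box constraint $|y_2-x_2|<(r/C_C)^2+|x_1|(r/C_C)$ is exactly what guarantees that the three legs together fit inside the budget $r$.

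The hardest step is the inner inclusion in the degenerate regime $|x_1|\lesssim r/C_C$: the intermediate level $\xi$ must be calibrated carefully to balance the two $X$-legs of the detour against the $Y$-leg, and the optimal scale $\sqrt{|y_2-x_2|}$ is precisely the one built into the $x_2$-side of the box. Once this calibration is performed, the constant $C_C$ is chosen large enough to absorb all the numerical constants arising from the broken-curve estimates and from the switches between $|x_1|$ and $|y_1|$ along the construction.
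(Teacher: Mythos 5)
The paper does not supply its own proof of this statement: it simply cites Theorem 3.2 of \cite{AM}, which in turn relies on the Franchi--Lanconelli metric machinery of \cite{FL}. Your direct argument via explicit admissible curves is a valid self-contained alternative, and it is the same mechanism that ultimately underlies those results, so the two routes differ more in packaging than in substance. The outer inclusion as you present it is complete and correct (and indeed holds with constant $1$, hence a fortiori with any $C_C>1$).

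There is, however, one genuine gap in the inner inclusion. With the threshold $|x_1|\ge r/C_C$ for the two-leg regime, the claim that $|y_1|\ge|x_1|-|y_1-x_1|$ is \emph{comparable} to $|x_1|$ does not hold: when $|x_1|$ sits right at the threshold $r/C_C$ and $y$ sits near the far edge of $\text{Box}(x,r/C_C)$, $|y_1|$ can be arbitrarily close to $0$, and the length $|y_2-x_2|/|y_1|$ of the $Y$-leg blows up. You must raise the threshold, e.g.\ to $|x_1|\ge 2r/C_C$, which forces $|y_1|\ge|x_1|-r/C_C\ge|x_1|/2>0$; then the two legs have total length at most $r/C_C + \frac{(r/C_C)(r/C_C+|x_1|)}{|x_1|/2}\le 4r/C_C$. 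In the complementary regime $|x_1|<2r/C_C$, your three-leg detour with intermediate level $\xi\asymp\max\{|x_1|,\sqrt{|y_2-x_2|}\}$, taken with the same sign as $x_1$, is the right construction; the box constraint gives $|y_2-x_2|<3(r/C_C)^2$, so each of the three legs is $O(r/C_C)$ and the whole curve fits in the budget once $C_C$ is chosen large enough. With that threshold correction your plan is sound.
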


Hence the following inequality holds for every $x\in\mathbb{R}^2$, $r>0$ and a suitable structural constant $C$. 
\begin{equation*}
C^{-1}|\text{Box}(x,r)|\leq |B_{CC}(x,r)|\leq C|\text{Box}(x,r)|.
\end{equation*}
Here and throughout this section we denote by $|E|$ or the Lebesgue measure of a measurable set $E$.

We now define suitable sublevel sets of a specific functions $g$ and $h$ in which we are able to construct barriers to prove the critical density and the double ball property respectively. We construct these functions modifying the fundamental solution 
$\Gamma(x,0)=\bigl(x_1^4+4x_2^2\bigr)^{(2-Q)/4}$ with pole at the origin of the subelliptic Laplacian $X^2+Y^2$ where $	Q=3$ is the homogeneous dimension.
As in \cite{AM} we consider $\rho(x,y)=\bigl((x_1^2-y_1^2)^2+4(x_2-y_2)^2\bigr)^{1/4}$, for every $r>0$, $y=(y_1,y_2)\in\mathbb{R}^2$ and we define
\begin{equation*}
\tilde{g}_r(x,y)=
\begin{cases}
\rho(x,y)\quad &\text{if }|y_1|<r\\
\frac{1}{|y_1|}\rho^2(x,y)\quad &\text{if }|y_1|\geq r.
\end{cases}
\end{equation*}
We denote the sublevel sets of the function $\tilde{g}_r(\cdot,y)$ by
\begin{equation*}
\tilde{G}(y,r):=\{x\in\mathbb{R}^2:\tilde{g}_r(x,y)<r\}.
\end{equation*}

In order to avoid two zeros of $\tilde{g}_r$ in $\tilde{G}(y,r)$ we also define the function
\begin{equation*}
g_r(x,y)=
\begin{cases}
\rho(x,y)\quad &\text{if }|y_1|<r\\
\frac{1}{|y_1|}\rho^2(x,y)\quad &\text{if }|y_1|\geq r\text{ and }x_1y_1\geq 0\\
+\infty \quad &\text{if }|y_1|\geq r\text{ and }x_1y_1\leq 0
\end{cases}.
\end{equation*}
and consider its sublevel sets
\begin{equation*}
G(y,r):=\{x\in\mathbb{R}^2:g_r(x,y)<r\}.
\end{equation*}

Moreover, for every $r>0$ and $y\in\mathbb{R}^2$, we define
\begin{equation}\label{def: sigma}
\sigma(x,y)=\bigl((x_1^2-y_1^2)^2+ 2y_1^2(x_1-y_1)^2+4(x_2-y_2)^2\bigr)^{1/4}
\end{equation}

and
\begin{equation*}
h_r(x,y)=
\begin{cases}
\sigma(x,y)\quad &\text{if }|y_1|<r\\
\frac{1}{|y_1|}\sigma^2(x,y)\quad &\text{if }|y_1|\geq r.
\end{cases}
\end{equation*}
Sublevel sets of the function $h_r(\cdot,y)$ will be denoted by
\begin{equation*}
H(y,r):=\{x\in\mathbb{R}^2:h_r(x,y)<r\}.
\end{equation*}
We remark that, contrary to $\tilde{G}$, sublevel sets $H$ are always connected but not symmetric with respect to $\{x=0\}$.  

In the next subsection are extensively used Theorems below in which sublevels set $G(y,r)$ and $H(y,r)$ are compared with boxes $Box(y,r)$.  

\begin{thm}[Structure Theorem II, \cite{AM} Theorem 3.6]\label{thm structure Box G}
There exists a structural constant $C_G>1$ such that for every $y\in \mathbb{R}^2$ and $r>0$
\begin{equation*}
\text{Box}(x, C_G^{-1}r)\subset G(x,r)\subset \text{Box}(x, C_Gr).
\end{equation*}
\end{thm}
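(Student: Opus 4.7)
\textbf{Proof plan for Theorem \ref{thm structure Box G}.}

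The plan is to reduce both inclusions to elementary algebraic manipulations of the identity $x_1^2-y_1^2=(x_1-y_1)(x_1+y_1)$, splitting according to the two branches in the definition of $g_r$, i.e.\ according to whether $|y_1|<r$ or $|y_1|\ge r$. By the symmetry $(x_1,y_1)\mapsto(-x_1,-y_1)$ (which preserves both $\rho$, the condition $x_1y_1\ge 0$, and the Box), in the second case I may assume $y_1\ge r>0$ and $x_1\ge 0$.

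\emph{Case $|y_1|<r$.} Here $x\in G(y,r)$ means $(x_1^2-y_1^2)^2+4(x_2-y_2)^2<r^4$, so $|x_1^2-y_1^2|<r^2$ and $|x_2-y_2|<r^2/2$. From $x_1^2<y_1^2+r^2<2r^2$ I get $|x_1-y_1|<(1+\sqrt 2)r$, and $r^2/2<r(r+|y_1|)$, giving the outer inclusion $G(y,r)\subset\mathrm{Box}(y,C\,r)$ for a universal $C$. For the opposite direction, if $x\in\mathrm{Box}(y,c r)$ with $c\in(0,1)$ to be chosen, I bound $|x_1+y_1|\le|x_1-y_1|+2|y_1|<(c+2)r$, hence $|x_1^2-y_1^2|<c(c+2)r^2$, while $|x_2-y_2|<c r(r+|y_1|)<2c r^2$. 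Plugging into $\rho^4$ and taking $c$ small enough (e.g.\ $c=1/5$) gives $\rho(x,y)<r$, i.e.\ $x\in G(y,r)$.

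\emph{Case $y_1\ge r$.} I first check the outer inclusion. If $x\in G(y,r)$ then $x_1\ge 0$ and $(x_1^2-y_1^2)^2+4(x_2-y_2)^2<r^2 y_1^2$, so $|x_1^2-y_1^2|<r y_1$ and $|x_2-y_2|<r y_1/2$. Since $x_1+y_1\ge y_1$, I divide to obtain
\[
|x_1-y_1|=\frac{|x_1^2-y_1^2|}{x_1+y_1}<\frac{r y_1}{y_1}=r,
\]
and $|x_2-y_2|<r y_1/2< r(r+y_1)$, so $x\in\mathrm{Box}(y,r)$. Conversely, if $x\in\mathrm{Box}(y,c r)$ with $c<1$, then $x_1>(1-c)y_1>0$, hence $x_1y_1\ge 0$; furthermore $|x_1+y_1|\le 2 y_1+c r\le 3 y_1$ and
\[
|x_1^2-y_1^2|<c r\cdot 3 y_1,\qquad 2|x_2-y_2|<2c r(r+y_1)\le 4c r y_1,
\]
so by $\sqrt{a^2+b^2}\le|a|+|b|$ we get $\rho^2(x,y)\le|x_1^2-y_1^2|+2|x_2-y_2|\le 7c r y_1$, whence $g_r(x,y)=\rho^2(x,y)/y_1<7c r$. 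Choosing $c$ sufficiently small (e.g.\ $c=1/7$) places $x$ in $G(y,r)$.

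Finally, the constant $C_G>1$ is taken to be the worst of the constants produced in the two cases and their reciprocals; this gives the two-sided inclusion uniformly in $y$ and $r$. The only subtle point—and the step where care is needed—is the inner inclusion in Case~2: one must simultaneously guarantee the sign condition $x_1 y_1\ge 0$ (otherwise $g_r=+\infty$ and the inclusion fails), and control $\rho^2/|y_1|$ via the factorization $x_1^2-y_1^2=(x_1-y_1)(x_1+y_1)$, using that $|y_1|\ge r$ is what converts the "quadratic" scale $r(r+|y_1|)$ in the $x_2$-direction of the Box into the "linear" scale $r|y_1|$ seen in $g_r$.
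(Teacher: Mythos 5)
Your proposal is correct. One point of context first: the paper does not actually prove this theorem — it is cited verbatim from \cite{AM} (Theorem 3.6 there), so there is no in-paper argument to compare against directly. The closest analogue is the proof the paper \emph{does} give for Structure Theorem III (Theorem \ref{thm structure Box H}, for the sets $H(y,r)$), and your argument is of the same flavor: split on $|y_1|<r$ versus $|y_1|\ge r$, and reduce to algebraic estimates on a box of fixed proportional radius. The difference is that for $H$ the auxiliary function $\sigma$ contains the extra term $2y_1^2(x_1-y_1)^2$, which hands the bound on $|x_1-y_1|$ directly (via $\sqrt{2}\,|y_1|\,|x_1-y_1|<r|y_1|$), whereas $\rho$ has no such term; you compensate exactly as one must, by factoring $x_1^2-y_1^2=(x_1-y_1)(x_1+y_1)$ and dividing by $x_1+y_1\ge y_1$, which requires the sign information $x_1\ge 0$. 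Both the use of that sign information in the outer inclusion (it comes for free from $g_r<\infty$) and the verification of it in the inner inclusion (from $x_1>(1-c)y_1>0$) are handled correctly, as is the $(x_1,y_1)\mapsto(-x_1,-y_1)$ symmetry reduction. The numerical choices ($c=1/5$ in Case 1, $c=1/7$ in Case 2, $C_G$ the worst of the constants and their reciprocals) all check out. The only cosmetic remark is that the paper's Theorem III proof fixes explicit constants ($1/4$ and $3$) up front rather than leaving $c$ to be chosen; your deferred-choice presentation is cleaner and equally rigorous.
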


\begin{rmk}\label{rmk stima dal basso diam G}
Whenever $y=(y_1,0)\in\mathbb{R}^2$, from the Theorem above easily follows
\begin{gather}
c^{-1}r^2(r+|y_1|)\leq|G(y,r)|\leq cr^2(r+|y_1|)\label{area G}\\
\sup_{G(y,2r)}|x_1|\leq \tilde{c}(r+|y_1|)\label{sup x-1 su G}\\
C_M\max\{r,r(r+|y_1|)\}\geq\diam{G(y,r)}\geq C_m\max\{r,r(r+|y_1|)\}.\label{diam G}
\end{gather}
with $c,\tilde{c},C_M>1$ and $C_m>0$ structural constant. 
\end{rmk}

\begin{thm}[Structure Theorem III]\label{thm structure Box H}
There exists a structural constant $C_H>1$ such that for every $y\in \mathbb{R}^2$ and $r>0$
\begin{equation*}
\text{Box}(x, C^{-1}_Hr)\subset H(x,r)\subset \text{Box}(x, C_Hr).
\end{equation*}
\begin{proof}
To prove the inclusions we shall distinguish two case: $|y_1|<r$ and $|y_1|\geq r$. If we assume $|y_1|<r$ and $x\in \text{Box}(y,r/4)$ then
$$|x_1-y_1|<r/4,\quad |x_2-y_2|<r/4(r/4+|y_1|)<1/4(1/4+1)r^2 $$
and
\begin{align*}
g_r(x,y)^4&=\sigma^4(x,y)=(x_1-y_1)^2(x_1+y_1)^2+2y_1^2(x_1-y_1)^2+4(x_2-y_2)^2\\
&\leq (1/4)^2(|x_1|+|y_1|)^2r^2+4(1/4)^2r^4(1/4+1)^2+2(1/4)^2r^4\\
&\leq (1/4)^2(|x_1-y_1|+2|y_1|)^2r^2+1/4r^4(1/4+1)^2+2(1/4)^2r^4\\
&\leq (1/4)^2(1/4+2)^2r^4+1/4(1/4+1)^2r^4+2(1/4)^2r^4\\
&=((1/4)^2(1/4+2)^2+1/4(1/4+1)^2+2(1/4)^2)r^4\\
&=(213/256)r^4<r^4
\end{align*}
Hence $x\in H(y,r)$. Moreover if  $|y_1|<r$ and $x\in H(y,r)$ we have 
$$|x_1^2-y_1^2|<r^2,\quad 4|x_2-y_2|^2<r^4 $$
from which we get
$$|x_1-y_1|\;||x_1-y_1|-2|y_1||\;<r^2,\quad |x_2-y_2|<\frac{r^2}{2}\leq r(r+|y_1|).$$
Consequently if $|x_1-y_1|>2|y_1|$, adding $|y_1|^2$ to both sides of the first inequality above and taking the square root gives $$|x_1-y_1|-|y_1|<\sqrt{r^2+|y_1^2|}<2r$$ from which we easily deduce $x\in \text{Box}(y,3r)$. On the other hand if $|x_1-y_1|\leq 2|y_1|$ then $x\in \text{Box}(y,3r)$. Hence we have proved the thesis for $|y_1|<r$.\\
To prove the other case suppose $|y_1|\geq r$ and $x\in \text{Box}(y,r/4)$ then
$$|x_1-y_1|<r/4,\quad |x_2-y_2|<r/4(r/4+|y_1|)<1/4(1/4+1)|y_1|^2 $$
and
\begin{align*}
g_r(x,y)^2|y_1|^2&=\sigma^4(x,y)=(x_1-y_1)^2(x_1+y_1)^2+2y_1^2(x_1-y_1)^2+4(x_2-y_2)^2\\
&\leq (1/4)^2(|x_1|+|y_1|)^2r^2+4(1/4)^2r^2y_1^2(1/4+1)+2(1/4)^2r^2y_1^2\\
&\leq (1/4)^2(|x_1-y_1|+2|y_1|)^2r^2+1/4r^2y_1^2(1/4+1)+2(1/4)^2r^2y_1^2\\
&\leq (1/4)^2(1/4+2)^2r^2y_1^2+1/4r^2y_1^2(1/4+1)+2(1/4)^2r^2y_1^2\\
&=((1/4)^2(1/4+2)^2+1/4(1/4+1)^2+2(1/4)^2)r^2y_1^2\\
&=(213/256)r^2y_1^2<r^2y_1^2
\end{align*}
Hence $x\in H(y,r)$. Moreover if $x\in H(y,r)$ 
$$\sqrt{2}|y_1||x_1-y_1|<r|y_1|,\quad 4|x_2-y_2|^2<r^2|y_1|^2$$
consequently 
$$|x_1-y_1|<r,\quad |x_2-y_1|<\frac{r|y_1|}{2}<r(r+|y_1|),$$
hence $x\in\text{Box}(y,r)$ concluding the proof.
\end{proof}

\end{thm}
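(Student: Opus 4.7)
The plan is to establish both inclusions by splitting into the two regimes that appear in the definition of $h_r$, namely $|y_1|<r$ and $|y_1|\geq r$. In each regime, the condition $h_r(x,y)<r$ reduces to an explicit polynomial inequality for $\sigma^4(x,y)$: against $r^4$ in the first case, and against $r^2y_1^2$ in the second. Since $\sigma^4(x,y)$ is a sum of three non-negative terms, any such bound yields separate controls on $(x_1^2-y_1^2)^2$, $2y_1^2(x_1-y_1)^2$, and $4(x_2-y_2)^2$, which is exactly what a box inclusion requires at the coordinate level.

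For the forward inclusion $H(y,r)\subset \text{Box}(y,C_H r)$, I would argue as follows. Suppose $|y_1|<r$ first. From $(x_1^2-y_1^2)^2<r^4$ one has $|x_1-y_1|\,|x_1+y_1|<r^2$; if $|x_1-y_1|\leq 2|y_1|$ then $|x_1-y_1|<2r$ directly, whereas if $|x_1-y_1|>2|y_1|$ one uses $|x_1+y_1|\geq |x_1-y_1|-2|y_1|$ to obtain a quadratic inequality in $|x_1-y_1|$ whose solution is bounded by a constant times $r$. The bound on $|x_2-y_2|$ comes immediately from $4(x_2-y_2)^2<r^4$, since $r(r+|y_1|)\geq r^2$. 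In the regime $|y_1|\geq r$, the middle term $2y_1^2(x_1-y_1)^2<r^2y_1^2$ gives $|x_1-y_1|<r/\sqrt{2}$ at once, and $4(x_2-y_2)^2<r^2y_1^2$ gives $|x_2-y_2|<r|y_1|/2\leq r(r+|y_1|)$.

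For the reverse inclusion $\text{Box}(y,C_H^{-1}r)\subset H(y,r)$, I would take $x$ in a box of radius $cr$ around $y$ with $c>0$ to be chosen small, and bound each of the three summands of $\sigma^4(x,y)$ from above. Using $|x_1+y_1|\leq |x_1-y_1|+2|y_1|\leq cr+2|y_1|$ one controls $(x_1^2-y_1^2)^2$; the other two summands are controlled directly by the box bounds on $|x_1-y_1|$ and $|x_2-y_2|$. In the case $|y_1|<r$ one uses $|y_1|\leq r$ and $r+|y_1|\leq 2r$ to reduce every term to a constant multiple of $r^4$; in the case $|y_1|\geq r$ one uses $r\leq|y_1|$ and $r+|y_1|\leq 2|y_1|$ to reduce every term to a constant multiple of $r^2y_1^2$. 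Choosing $c$ small enough makes the total constant strictly less than one, yielding $h_r(x,y)<r$.

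The main obstacle is the case-split inside the first case of the forward inclusion, where $(x_1^2-y_1^2)^2$ alone does not immediately bound $|x_1-y_1|$: one must separate the subcase $|x_1-y_1|\leq 2|y_1|$ from its opposite and treat each on its own. A secondary technical point is that one has to make the same choice of $c$ work uniformly in both regimes of $|y_1|$, so that a single structural constant $C_H$ serves both inclusions in both regimes. Apart from these, the proof is straightforward algebraic bookkeeping, keeping careful track of how each factor $cr$, $|y_1|$, and $r+|y_1|$ enters.
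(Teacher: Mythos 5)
Your proposal is correct and follows essentially the same route as the paper: split on the two regimes $|y_1|<r$ and $|y_1|\geq r$, bound each of the three non-negative summands of $\sigma^4(x,y)$ separately for the box-to-$H$ inclusion with a small radius $cr$ (the paper simply fixes $c=1/4$ and computes the total constant $213/256$), and for the $H$-to-box inclusion use the same case-split on $|x_1-y_1|\lessgtr 2|y_1|$ together with $|x_1+y_1|\geq|x_1-y_1|-2|y_1|$ to bound $|x_1-y_1|$ by a constant multiple of $r$.
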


Theorem \ref{thm structure 1} is not sufficient to conclude that the ring condition (see Definition \ref{ring condition}) holds in the metric space $(\Omega,d_{CC},\mathscr{L})$. In \cite{AM} this problem is overcome introducing the new H\"older quasi distance 
\begin{equation*}
\tilde{d}(x,y):=|x_1-y_1|+\sqrt{x_1^2+y_1^2+4|x_2-y_2|}-\sqrt{x_1^2+y_1^2}
\end{equation*}
on the measure space $(\Omega,\mathscr{L})$. The triplet $(\Omega, \tilde{d},\mathscr{L})$ turns out to be a doubling quasi metric space satisfying the ring condition (\cite[Theorem 3.4]{AM}), moreover from the theorem below we deduce that $\tilde{d}$ is equivalent to the Carnot–Carathéodory metric $d_{CC}$.

For $r>0$ we denote by
\begin{equation*}
B(x,r):=\{y\in\mathbb{R}^2:\tilde{d}(x,y)<r\},
\end{equation*}
the quasi metric ball of center $x$ and radius $r$.

\begin{thm}[Structure Theorem IV, \cite{AM} Theorem 3.3]\label{thm structure Box B}
There exists a constant $C_B > 1$ such that
\begin{equation*}
\text{Box}(x, C_B^{-1}r)\subset B(x,r)\subset \text{Box}(x, C_Br) \quad\text{for every } y\in \mathbb{R}^2,\; r>0.
\end{equation*}
\end{thm}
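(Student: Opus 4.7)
The plan is to unpack the explicit formula
\[
\tilde{d}(x,y)=|x_1-y_1|+\sqrt{x_1^2+y_1^2+4|x_2-y_2|}-\sqrt{x_1^2+y_1^2}
\]
and compare the two summands with the side-lengths of $\mathrm{Box}(x,r)=\,]x_1-r,x_1+r[\,\times\,]x_2-r(r+|x_1|),x_2+r(r+|x_1|)[\,$. The key algebraic identity I will rely on is $\sqrt{a+b}-\sqrt{a}=\frac{b}{\sqrt{a+b}+\sqrt{a}}$, which behaves like $b/(2\sqrt{a})$ when $a$ dominates $b$ and like $\sqrt{b}$ when $b$ dominates $a$.

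For the inclusion $B(x,r)\subset\mathrm{Box}(x,C_B r)$, I fix $y$ with $\tilde d(x,y)<r$. Since both summands in $\tilde d$ are non-negative, I get at once $|x_1-y_1|<r$. From the second summand I deduce
\[
\sqrt{x_1^2+y_1^2+4|x_2-y_2|}<r+\sqrt{x_1^2+y_1^2},
\]
and squaring yields $4|x_2-y_2|<r^2+2r\sqrt{x_1^2+y_1^2}$. Using $\sqrt{x_1^2+y_1^2}\le |x_1|+|y_1|\le 2|x_1|+|x_1-y_1|<2|x_1|+r$ I bound $|x_2-y_2|<\tfrac{3}{4}r^2+r|x_1|\le r(r+|x_1|)$, which puts $y$ in a dilate of $\mathrm{Box}(x,r)$ with a small universal constant.

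The harder direction is $\mathrm{Box}(x,C_B^{-1}r)\subset B(x,r)$, because $\sqrt{a+b}-\sqrt{a}$ is not uniformly Lipschitz in $b$ near $a=0$; this is where the Grushin degeneracy on the line $\{x_1=0\}$ bites. I therefore split into the non-degenerate and degenerate regimes. \emph{Case 1: $|x_1|\ge \delta r$.} Then $\sqrt{x_1^2+y_1^2}\ge|x_1|$, so
\[
\sqrt{x_1^2+y_1^2+4|x_2-y_2|}-\sqrt{x_1^2+y_1^2}\le\frac{4|x_2-y_2|}{2|x_1|}\le\frac{2\delta r(\delta r+|x_1|)}{|x_1|}\le 4\delta r,
\]
where I used $|x_2-y_2|<\delta r(\delta r+|x_1|)$ from the box hypothesis. \emph{Case 2: $|x_1|<\delta r$.} Here the more crude bound $\sqrt{a+b}-\sqrt{a}\le\sqrt{b}$ is enough, because $|y_1|\le|x_1|+|x_1-y_1|<2\delta r$ and $|x_2-y_2|<2\delta^2 r^2$, giving a bound of order $\delta r$. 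Combining with $|x_1-y_1|<\delta r$ in both cases and choosing $\delta$ small enough (explicitly, $\delta<(1+\sqrt{13})^{-1}$ suffices), I obtain $\tilde d(x,y)<r$.

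The main obstacle is the degenerate case $|x_1|\approx 0$, where the denominator $\sqrt{a+b}+\sqrt{a}$ cannot be bounded below by $|x_1|$ and I must instead exploit that $\mathrm{Box}$ itself shrinks in the $x_2$-direction (as $r(r+|x_1|)\approx r^2$). Once the case split is in place the estimates are routine and produce explicit $C_B$. Finally, taking $C_B$ to be the larger of the two constants from the two inclusions yields the statement.
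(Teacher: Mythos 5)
The paper does not actually prove this theorem: it is imported verbatim from the reference \cite{AM} (Theorem~3.3 there), so there is no internal proof to compare against. Your direct verification, however, is the natural one and is essentially correct. The inclusion $B(x,r)\subset\mathrm{Box}(x,C_Br)$ is handled cleanly --- in fact your computation gives the sharper inclusion $B(x,r)\subset\mathrm{Box}(x,r)$, since $\tfrac34 r^2+r|x_1|\le r(r+|x_1|)$. For the reverse inclusion, the decomposition into the regimes $|x_1|\ge\delta r$ and $|x_1|<\delta r$, with the rationalisation $\sqrt{a+b}-\sqrt a=b/(\sqrt{a+b}+\sqrt a)$ in the first and the crude $\sqrt{a+b}-\sqrt a\le\sqrt b$ in the second, is exactly what the degeneracy on $\{x_1=0\}$ requires, and both estimates go through.

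One small slip: the announced threshold $\delta<(1+\sqrt{13})^{-1}\approx 0.217$ does not follow from the bounds you actually wrote. In Case~1 you obtain
\[
\tilde d(x,y)<\delta r+\frac{2\delta r(\delta r+|x_1|)}{|x_1|}\le \delta r+4\delta r=5\delta r,
\]
which forces $\delta<1/5$, and in Case~2 you get $\tilde d(x,y)<(1+2\sqrt2)\delta r$, forcing $\delta<(1+2\sqrt2)^{-1}\approx 0.261$. The binding constraint is therefore $\delta<1/5$, which is strictly smaller than $(1+\sqrt{13})^{-1}$. This only affects the explicit value of $C_B$ and not the validity of the theorem (any sufficiently small $\delta$ works, and then $C_B=\max\{1/\delta,1\}=1/\delta$ does the job), but the stated numerical constant should be replaced by $\delta\le 1/5$ to be consistent with the estimates as written. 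Also note the paper's statement has a harmless typo (``for every $y\in\mathbb R^2$'' should read ``for every $x\in\mathbb R^2$'').
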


%

It is well known that there exists a group of dilations $(\delta_t)_{t>0}$,
\begin{equation}\label{dilatations}
\delta_t:\mathbb{R}^2\to\mathbb{R}^2,\quad \delta_{t}(x)=(tx_1,t^2x_2)
\end{equation}
such that the vector fields $X_j$ are $\delta_t$-homogeneous of degree one , i.e. for every $u\in C^1(\mathbb{R}^2)$, $x\in \mathbb{R}^2$ and $t>0$
\begin{equation*}
X_j(u\circ\delta_t)(x)=t(X_ju)(\delta_t(x)).
\end{equation*}

A very useful tool for dealing with barrier functions is the weighted ABP maximum principle proved in \cite{AM}: 

\begin{thm}[Weighted ABP Maximum Principle, \cite{AM} Theorem 2.5]\label{thm ABP}
Let $\Omega\subset\mathbb{R}^2$ be a bounded domain and assume $u\in C(\ol{\Omega})\cap C^2(\Omega)$, $u\geq 0$ on $\partial \Omega$ is a classical solution of $Lu(x)\leq x_1^2f(x)$ in $\Omega$ with $f$ bounded. Then there exists a positive structural constant $C$ such that 
\begin{equation*}
\sup_{\Omega}u^-\leq C\diam{\Omega}\biggl(\int_{\Omega\cap\{u=\Gamma_u\}}(x_1f^+)^2\;dx\biggr)^{\frac{1}{2}}.
\end{equation*}
Here $d=\diam{\Omega}$ is the Euclidean diameter of $\Omega$, $f^+(x):=\max\{f(x),0\}$, $\Gamma_u$ is the convex envelope of $-u^-(x):=-\max\{-u(x),0\}$ in a Euclidean ball of radius $2d$ containing $\Omega$ and $u\equiv 0$ outside $\Omega$.
\end{thm}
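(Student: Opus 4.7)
The plan is to adapt the classical Alexandrov--Bakelman--Pucci argument to the Grushin operator, exploiting the fact that the degeneracy of the symbol matrix at $\{x_1=0\}$ is exactly balanced by the weight $x_1^2$ on the right-hand side. First I would recast $L$ in the form $Lu = \operatorname{tr}(A(x) D^2 u)$ with
\begin{equation*}
A(x)=\begin{pmatrix} a_{11}(x) & a_{12}(x)\,x_1 \\ a_{12}(x)\,x_1 & a_{22}(x)\,x_1^2\end{pmatrix},
\end{equation*}
and observe that the ellipticity assumption \eqref{grushin operator ellipticity constant} forces $\det A(x)\ge \lambda^2 x_1^2$ for every $x$, while $A(x)\succeq 0$.

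Next, let $d=\operatorname{diam}(\Omega)$ and fix a Euclidean ball $\mathcal{B}$ of radius $2d$ containing $\Omega$. I would extend $u$ by $0$ outside $\Omega$, set $M:=\sup_\Omega u^-$, and let $\Gamma_u$ be the convex envelope of $-u^-$ in $\mathcal{B}$. The first standard step is to show that the image of the subdifferential mapping $\partial\Gamma_u$ restricted to the contact set $\Sigma:=\{u=\Gamma_u\}$ contains the Euclidean ball of radius $M/(4d)$ about the origin in $\mathbb{R}^2$: if $x_0\in\Omega$ realizes $u(x_0)=-M$, then for every $p\in\mathbb{R}^2$ with $|p|\le M/(4d)$ the affine function $\ell(x)=-M+p\cdot(x-x_0)$ satisfies $\ell\le 0=-u^-$ on $\partial\mathcal{B}$, hence $\ell\le -u^-$ on $\mathcal{B}$, so $p\in\partial\Gamma_u(x_0)$.

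The pointwise estimate on the contact set is the heart of the proof. At a.e. $x\in\Sigma$, Aleksandrov's theorem gives a second differential of $\Gamma_u$, and the fact that $\Gamma_u\le u$ with equality at $x$ yields $0\preceq D^2\Gamma_u(x)\preceq D^2 u(x)$. Combining with the differential inequality,
\begin{equation*}
\operatorname{tr}\bigl(A(x)D^2\Gamma_u(x)\bigr)\le \operatorname{tr}\bigl(A(x)D^2 u(x)\bigr)=Lu(x)\le x_1^2 f^+(x).
\end{equation*}
Since both $A(x)$ and $D^2\Gamma_u(x)$ are positive semidefinite, the AM--GM inequality on the eigenvalues of $A(x)D^2\Gamma_u(x)$ gives
\begin{equation*}
2\sqrt{\det A(x)\,\det D^2\Gamma_u(x)}\le \operatorname{tr}\bigl(A(x)D^2\Gamma_u(x)\bigr),
\end{equation*}
and the lower bound $\det A\ge \lambda^2 x_1^2$ produces the decisive pointwise estimate
\begin{equation*}
\det D^2\Gamma_u(x)\le \frac{(x_1^2 f^+(x))^2}{4\lambda^2 x_1^2}=\frac{x_1^2(f^+(x))^2}{4\lambda^2}.
\end{equation*}
This is where the weight $x_1^2$ on the right-hand side pays off: the two factors of $x_1^2$ coming from $\det A$ and from the RHS combine to leave exactly $(x_1 f^+)^2$.

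Finally I would close the argument by the area formula for convex envelopes (the standard Aleksandrov inequality $|\partial\Gamma_u(E)|\le \int_E \det D^2\Gamma_u\,dx$ valid for measurable $E\subset\Sigma$):
\begin{equation*}
c\left(\frac{M}{4d}\right)^{2}\le \bigl|B_{M/(4d)}\bigr|\le \int_{\Sigma}\det D^2\Gamma_u\,dx\le \frac{1}{4\lambda^2}\int_{\Omega\cap\{u=\Gamma_u\}}(x_1 f^+)^2\,dx,
\end{equation*}
which upon taking square roots yields the claimed bound with a structural constant $C$. The only genuine obstacle is the regularity issue at $\{x_1=0\}$, where both sides of the differential inequality vanish; but since the contact-set argument is purely almost-everywhere and only uses the AM--GM bound, the degeneracy locus contributes nothing to the integral and does not spoil the estimate.
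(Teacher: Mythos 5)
The paper does not actually prove Theorem \ref{thm ABP}; it cites it as \cite[Theorem 2.5]{AM}, so there is no in-paper proof to compare against. Your overall plan — writing $Lu=\operatorname{tr}(A D^2u)$ with $A=\operatorname{diag}(1,x_1)\,(a_{ij})\,\operatorname{diag}(1,x_1)$, observing $\det A\ge\lambda^2 x_1^2$, then running the ABP argument with the AM--GM inequality $\operatorname{tr}(A D^2\Gamma_u)\ge 2\sqrt{\det A\cdot\det D^2\Gamma_u}$ on the contact set so that one factor of $x_1^2$ from $\det A$ cancels against the $x_1^2$ on the right-hand side — is the correct and natural route and is precisely what makes this a \emph{weighted} ABP. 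Your observation that the degenerate locus $\{x_1=0\}$ has Lebesgue measure zero, so the a.e.\ pointwise estimate suffices for the integral bound (given the $C^{1,1}$ regularity of $\Gamma_u$ on the contact set, which is why $|\partial\Gamma_u(E)|=\int_E\det D^2\Gamma_u$ for $E\subset\Sigma$), is also the right way to dispose of the degeneracy.

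There is, however, a genuine error in the subdifferential containment step. You assert that for $|p|\le M/(4d)$, the affine function $\ell(x)=-M+p\cdot(x-x_0)$ satisfies ``$\ell\le 0=-u^-$ on $\partial\mathcal{B}$, hence $\ell\le -u^-$ on $\mathcal{B}$, so $p\in\partial\Gamma_u(x_0)$.'' Both the implication and the conclusion are false. Since $-u^-$ is not convex, $\ell\le -u^-$ on $\partial\mathcal{B}$ does not propagate into $\mathcal{B}$: if $x'$ is another point where $u^-(x')$ is close to $M$ and $p\cdot(x'-x_0)>0$, then $\ell(x')>-M\ge -u^-(x')$. Consequently $p\in\partial\Gamma_u(x_0)$ need not hold; indeed $\partial\Gamma_u(x_0)$ can be just $\{0\}$ (e.g.\ if $\Gamma_u$ is differentiable at its minimum). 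The standard repair is the sliding argument: set $c=\inf_{\mathcal{B}}\bigl(-u^-(x)-p\cdot(x-x_0)\bigr)\le -M$ and let $\ell_c(x)=c+p\cdot(x-x_0)$, so that $\ell_c\le -u^-$ with equality at some $\bar x\in\ol{\mathcal{B}}$. For $|p|<M/(4d)$, at any $x$ with $-u^-(x)=0$ one has $\ell_c(x)\le -M+|p|\cdot 4d<0$, so the touching point $\bar x$ cannot lie on $\partial\mathcal{B}$ nor in $\mathcal{B}\setminus\Omega$; hence $\bar x\in\Omega\cap\Sigma$ and $p\in\partial\Gamma_u(\bar x)$. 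This gives the containment you actually need, $B_{M/(4d)}\subset\partial\Gamma_u(\Omega\cap\Sigma)$, at a point $\bar x$ which in general differs from $x_0$. With this correction the remainder of your argument goes through and yields the stated estimate.
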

\subsection{Double Ball Property}

In this subsection we prove double ball property for sublevel sets $H(y,r)$, (see Theorem \ref{thm: DB in H}), and then extend it to balls $B(x,r)$ with the aid of Structure Theorems \ref{thm structure Box H} and \ref{thm structure Box B}. \\

\begin{lem}\label{lem: barrier DB}
Let $\Lambda,\;\lambda$ be the ellipticity constants and $\sigma$ the function defined in \eqref{def: sigma}. For $\alpha\leq 4-10\Lambda/\lambda$, The function $\phi(x)=\sigma^{\alpha}$ is a classical solution of $L\phi\geq 0$ in the set $\{\sigma>0\}$.
\end{lem}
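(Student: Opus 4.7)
The plan is to compute $L\phi$ directly and reduce the inequality $L\phi\ge 0$ to a pointwise polynomial inequality in $(x_1,y_1,x_2-y_2)$ that follows from ellipticity and the assumption on $\alpha$.

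First I would set $\Psi:=\sigma^4=(x_1^2-y_1^2)^2+2y_1^2(x_1-y_1)^2+4(x_2-y_2)^2$ and compute the Euclidean derivatives of $\Psi$. The key cancellation is $\partial_{x_1}\Psi=4x_1(x_1^2-y_1^2)+4y_1^2(x_1-y_1)=4(x_1^3-y_1^3)$; one also gets $\partial_{x_2}\Psi=8(x_2-y_2)$, $\partial^2_{x_1}\Psi=12x_1^2$, $\partial^2_{x_2}\Psi=8$, $\partial_{x_1}\partial_{x_2}\Psi=0$. Writing $\phi=\Psi^{\alpha/4}$ and using the standard chain rule for $\partial_i\partial_j\phi$, together with $X^2\phi=\partial^2_{x_1}\phi$, $Y^2\phi=x_1^2\partial^2_{x_2}\phi$, $YX\phi=x_1\partial_{x_1}\partial_{x_2}\phi$ (valid since $\phi$ is smooth on $\{\sigma>0\}$), I would factor out $\frac{\alpha}{4}\Psi^{\alpha/4-2}$ and obtain
\begin{equation*}
L\phi=\tfrac{\alpha}{4}\Psi^{\alpha/4-2}\Bigl[\,4x_1^2(3a_{11}+2a_{22})\,\Psi+16\bigl(\tfrac{\alpha}{4}-1\bigr)\,a_{ij}\xi_i\xi_j\Bigr],
\end{equation*}
where the crucial observation is that the term coming from $\partial_i\Psi\,\partial_j\Psi$ (with the factors of $x_1$ from $Y$) organises itself as a quadratic form $a_{ij}\xi_i\xi_j$ applied to
$\xi:=(x_1^3-y_1^3,\,2x_1(x_2-y_2))$.

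Because $\alpha\le 4-10\Lambda/\lambda<0$, the prefactor $\alpha/4$ is negative and so is $\alpha/4-1$; therefore $L\phi\ge 0$ is equivalent to the bracket being $\le 0$, i.e.
\begin{equation*}
(4-\alpha)\,a_{ij}\xi_i\xi_j\;\ge\;x_1^2\,\Psi\,(3a_{11}+2a_{22}).
\end{equation*}
Using ellipticity on the left ($a_{ij}\xi_i\xi_j\ge\lambda|\xi|^2$) and the bounds $a_{11},a_{22}\in[\lambda,\Lambda]$ on the right ($3a_{11}+2a_{22}\le 5\Lambda$), it is enough to establish
\begin{equation*}
(4-\alpha)\lambda\bigl[(x_1^3-y_1^3)^2+4x_1^2(x_2-y_2)^2\bigr]\;\ge\;5\Lambda\,x_1^2\bigl[(x_1^2-y_1^2)^2+2y_1^2(x_1-y_1)^2+4(x_2-y_2)^2\bigr].
\end{equation*}
The $(x_2-y_2)^2$ terms match provided $(4-\alpha)\lambda\ge 5\Lambda$, which is already implied by $\alpha\le 4-10\Lambda/\lambda$ (since $\Lambda/\lambda\ge 1$).

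The remaining, $x_2$-free inequality is where the sharpness of the exponent enters. Factoring $(x_1-y_1)^2$ out of both sides (and noting the case $x_1=y_1$ is contained in the $(x_2-y_2)$-analysis already done, and $x_1=0$ is trivial), one is reduced to
\begin{equation*}
(4-\alpha)\lambda\,(x_1^2+x_1y_1+y_1^2)^2\;\ge\;5\Lambda\,x_1^2\bigl[(x_1+y_1)^2+2y_1^2\bigr].
\end{equation*}
Setting $t:=y_1/x_1$ (for $x_1\neq 0$) this becomes $(4-\alpha)\lambda(1+t+t^2)^2\ge 5\Lambda(1+2t+3t^2)$. The main obstacle is precisely to identify the worst value of $t$: a short calculus exercise shows that
\begin{equation*}
\frac{d}{dt}\!\left(\frac{1+2t+3t^2}{(1+t+t^2)^2}\right)\;=\;\frac{-6t^2(1+t)}{(1+t+t^2)^3},
\end{equation*}
so the maximum of the ratio is attained at $t=-1$ with value $2$, giving the sharp condition $(4-\alpha)\lambda\ge 10\Lambda$, i.e. $\alpha\le 4-10\Lambda/\lambda$, which is exactly the hypothesis. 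This completes the reduction and hence the proof.
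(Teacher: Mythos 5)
Your proof is correct and reaches the same key intermediate formula as the paper: after the observation $\partial_{x_1}\sigma^4=4(x_1^3-y_1^3)$ and $\partial_{x_1}\partial_{x_2}\sigma^4=0$, both arguments reduce $L\phi\ge 0$ to the scalar inequality
\begin{equation*}
(4-\alpha)\,\bigl(a_{11}(x_1^3-y_1^3)^2+4a_{12}x_1(x_1^3-y_1^3)(x_2-y_2)+4a_{22}x_1^2(x_2-y_2)^2\bigr)\;\ge\;(3a_{11}+2a_{22})\,x_1^2\,\sigma^4,
\end{equation*}
and then apply the two ellipticity bounds $a_{ij}\xi_i\xi_j\ge\lambda|\xi|^2$ and $3a_{11}+2a_{22}\le 5\Lambda$. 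Where you and the paper diverge is only in how the resulting $f$-independent polynomial inequality is verified. The paper expands $(x_1^3-y_1^3)^2=(x_1^2-y_1^2)^2x_1^2+\text{(mixed terms)}$ and regroups everything into a sum of the form $((\alpha-4)\lambda+5\Lambda)\cdot(\cdots)^2+((\alpha-4)\lambda+10\Lambda)\cdot(\cdots)^2+\lambda(\alpha-4)(x_1+y_1)^2\cdot(\cdots)$, each piece manifestly of the right sign under $\alpha\le 4-10\Lambda/\lambda$; this is a short but somewhat opaque algebraic rearrangement, and it is not immediate from it that the constant $10$ is the best possible for that reduced inequality. Your version instead factors out $(x_1-y_1)^2$, passes to $t=y_1/x_1$, and maximizes the ratio $(1+2t+3t^2)/(1+t+t^2)^2$ explicitly, finding the maximum $2$ at $t=-1$. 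This buys transparency: it identifies the worst configuration ($y_1=-x_1$, i.e.\ points symmetric across the degeneracy line $\{x_1=0\}$) and shows that, once one has applied the ellipticity bounds, the threshold $\alpha=4-10\Lambda/\lambda$ is exactly critical, which the paper's regrouping proves but does not explain. Two minor remarks: your clause that ``the case $x_1=y_1$ is contained in the $(x_2-y_2)$-analysis'' is correct but would benefit from a sentence making explicit that on $\{x_1=y_1\}$ the reduced inequality collapses to the $(x_2-y_2)^2$-comparison alone; and ``sharp'' should be understood as sharp for the inequality obtained \emph{after} the ellipticity bounds (not for $L\phi\ge 0$ itself, since those bounds need not be saturated).
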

\begin{proof}
For clarity reasons we compute separately first and second partial derivatives of $\sigma$:
\begin{align*}
&\sigma_{x_1}=\sigma^{-3}\bigl((x_1^2-y_1^2)x_1+y_1^2(x_1-y_1)\bigr)=\sigma^{-3}(x_1^3-y_1^3)\\
&\sigma_{x_2}=2\sigma^{-3}(x_2-y_2)\\
&\sigma_{x_1x_1}=-3\sigma^{-7}\bigl(x_1^3-y_1^3\bigr)^2+3\sigma^{-3}x_1^2\\
&\sigma_{x_1x_2}=-3\sigma^{-7}\bigl(x_1^3-y_1^3\bigr)(x_2-y_2)\\
&\sigma_{x_2x_2}=-3\sigma^{-7}(x_2-y_2)^2+2\sigma^{-3}.\\
\end{align*} 
Using calculation above it is straightforward to compute
\begin{align*}
L\phi &= \alpha\sigma^{\alpha-2}\bigl((\alpha-1)(a_{11}\sigma_{x_1}^2+2x_1a_{12}\sigma_{x_2}^2+x_1^2a_{22}\sigma_{x_2}^2)+\sigma L\sigma \bigr)\\
&=\alpha\sigma^{\alpha-2}\Bigl(a_{11}\bigl((\alpha-1)\sigma_{x_1}^2+\sigma\sigma_{x_1x_1}\bigr)+2x_1a_{12}\bigl((\alpha-1)\sigma_{x_1}\sigma_{x_2}+\sigma\sigma_{x_1x_2}\bigr)+x_1^2a_{22}\bigl((\alpha-1)\sigma_{x_2}^2+\sigma\sigma_{x_2x_2}\bigr)\Bigr)\\
&=\alpha\sigma^{\alpha-2}\biggl((\alpha-4)\sigma^{-6}\Bigl(a_{11}(x_1^3-y_1^3)^2+2a_{12}x_1(x_1^3-y_1^3)2(x_2-y_2)+a_{22}x_1^24(x_2-y_2)^2\Bigr)\\
&\qquad  \qquad+ x_1^2\sigma^{-2}(3a_{11}+2a_{22})\biggr)\\
&=\alpha\sigma^{\alpha-8}\bigl((\alpha-4)\gamma + x_1^2\sigma^{4}(3a_{11}+2a_{22})\bigr)
\end{align*}
where
$$\gamma:=a_{11}(x_1^3-y_1^3)^2+4a_{12}x_1(x_1^3-y_1^3)(x_2-y_2)+4a_{22}x_1^2(x_2-y_2)^2.$$
By \eqref{grushin operator ellipticity constant}, we get 
$\gamma\geq \lambda\Bigl((x_1^3-y_1^3)^2+4x_1^2(x_2-y_2)^2 \Bigr)$ and $3a_{11}+2a_{22}\leq 5\Lambda$, so that
\begin{align*}
L\phi &\geq\alpha\sigma^{\alpha-8}\biggl((\alpha-4)\lambda\Bigl((x_1-y_1)^2\bigl((x_1+y_1)x_1+y_1^2\bigr)^2+x_1^24(x_2-y_2)^2 \Bigr) + 5\Lambda x_1^2\sigma^{4}\biggr)\\
&=\alpha\sigma^{\alpha-8}\Bigl(\bigl((\alpha-4)\lambda+5\Lambda\bigr)(x_1^2-y_1^2)^2x_1^2+4\bigl((\alpha-4)\lambda+5\Lambda\bigr)(x_2-y_2)^2x_1^2+\\
&\qquad +y_1^2(x_1-y_1)^2\bigl(2\lambda(\alpha-4)(x_1+y_1)x_1+\lambda(\alpha-4)y_1^2+10\Lambda x_1^2\bigr)\Bigr)\\
&=\alpha\sigma^{\alpha-8}\Bigl(\bigl((\alpha-4)\lambda+5\Lambda\bigr)(x_1^2-y_1^2)^2x_1^2+4\bigl((\alpha-4)\lambda+5\Lambda\bigr)(x_2-y_2)^2x_1^2+\\
&\qquad +y_1^2(x_1-y_1)^2\bigl(2(\lambda(\alpha-4)+5\Lambda)x_1^2+2\lambda(\alpha-4)x_1y_1+\lambda(\alpha-4)y_1^2\bigr)\Bigr)\\
&=\alpha\sigma^{\alpha-8}\Bigl(\bigl((\alpha-4)\lambda+5\Lambda\bigr)(x_1^2-y_1^2)^2x_1^2+4\bigl((\alpha-4)\lambda+5\Lambda\bigr)(x_2-y_2)^2x_1^2+\\
&\qquad +y_1^2(x_1-y_1)^2\bigl((\lambda(\alpha-4)+10\Lambda)x_1^2+\lambda(\alpha-4)x_1^2+2\lambda(\alpha-4)x_1y_1+\lambda(\alpha-4)y_1^2\bigr)\Bigr)\\
\end{align*}
By the definition of $\alpha,$ we have $\alpha(\lambda(\alpha-4)+5\Lambda)\geq 0$ and $\alpha(\lambda(\alpha-4)+10\Lambda)\geq 0$, and we finally conclude
\begin{align*}
L\phi &\geq \lambda\alpha(\alpha-4)\sigma^{\alpha-8}y_1^2(x_1-y_1)^2\bigl(x_1^2+2x_1y_1+y_1^2\bigr)\geq 0.\\
\end{align*}

\end{proof}

\begin{thm}\label{thm barrier DB}
Let $\sigma$ be the function defined in \eqref{def: sigma} and $\alpha$ as in Lemma above. There exist positive structural constants $M_1$, $M_2$ and $0<\gamma<1$ such that, defined $R(y,r,3r):=H(y,3r)\setminus \ol{H(y,r)}$, for every $y\in\mathbb{R}^2$ and $r>0$ the function $\Phi:=M_2\sigma^\alpha-M_1$ satisfies
\begin{itemize}
\item $\Phi\in C^2(R(y,r,3r))\cap C(\ol{R(y,r,3r)})$,
\item $L\Phi\geq 0$ on $R(y,r,3r)$,
\item $\Phi \mid_{\partial H(y,3r)}=0$,
\item $\Phi \mid_{\partial H(y,r)}=1$,
\item $\inf_{R(y,r,2r)}\Phi\geq \gamma$.
\end{itemize}
\end{thm}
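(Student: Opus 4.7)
The preceding lemma gives the subsolution property for free: since $L$ has no zero-order term, $L\Phi = M_2 L(\sigma^{\alpha}) \geq 0$ on $\{\sigma>0\}$ as soon as $M_2\geq 0$, which settles the second bullet. For the regularity bullet, note that $\sigma(x,y)=0$ forces $x=y$ (a short case split on $y_1=0$ vs.\ $y_1\neq 0$), and $y$ sits in the interior of $H(y,r)$ since $\sigma(y,y)=0$. Hence $\sigma$ is bounded away from $0$ on the closed ring $\ol{R(y,r,3r)}$, and because $\sigma^4$ is polynomial, $\sigma^{\alpha}$ is of class $C^{\infty}$ there; so $\Phi\in C^{\infty}(R(y,r,3r))\cap C(\ol{R(y,r,3r)})$.

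The boundary conditions (bullets 3 and 4) reduce to a $2\times 2$ linear calibration. Unpacking the piecewise definition of $h_{\rho}$ one checks that for every $\rho>0$
\begin{equation*}
\partial H(y,\rho) \,=\, \{\,x : \sigma(x,y)=s_{\rho}\,\}, \qquad s_{\rho} := \begin{cases} \rho, & |y_1|<\rho,\\ \sqrt{\rho\,|y_1|}, & |y_1|\geq \rho.\end{cases}
\end{equation*}
Thus $\sigma$ is constant on each boundary of the ring, and prescribing $\Phi=1$ on $\{\sigma=s_r\}$ and $\Phi=0$ on $\{\sigma=s_{3r}\}$ forces
\begin{equation*}
M_2 \,=\, \frac{1}{s_r^{\alpha}-s_{3r}^{\alpha}}\,>\,0, \qquad M_1 \,=\, s_{3r}^{\alpha}\,M_2 \,>\,0,
\end{equation*}
the positivity stemming from $\alpha<0$ and $s_r<s_{3r}$.

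For the last bullet, since $\alpha<0$ the map $\sigma\mapsto M_2\sigma^{\alpha}-M_1$ is strictly decreasing in $\sigma$, so on the inner ring $R(y,r,2r)=\{s_r<\sigma<s_{2r}\}$ the infimum is
\begin{equation*}
\inf_{R(y,r,2r)}\Phi \,=\, \frac{s_{2r}^{\alpha}-s_{3r}^{\alpha}}{s_r^{\alpha}-s_{3r}^{\alpha}} \,=\, \frac{(s_{3r}/s_{2r})^{-\alpha}-1}{(s_{3r}/s_r)^{-\alpha}-1}.
\end{equation*}
I would then split into the four subcases $|y_1|<r$, $r\leq|y_1|<2r$, $2r\leq|y_1|<3r$, $|y_1|\geq 3r$; in each an elementary computation produces the universal ratio bounds $s_{3r}/s_{2r}\geq \sqrt{3/2}$ and $s_{3r}/s_r\leq 3$. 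Hence
\begin{equation*}
\inf_{R(y,r,2r)}\Phi \,\geq\, \gamma \,:=\, \frac{(3/2)^{-\alpha/2}-1}{3^{-\alpha}-1} \,\in\,(0,1),
\end{equation*}
a constant depending only on $\lambda,\Lambda$ through $\alpha$; the same ratio bounds also pin $M_1, M_2$ between structural constants.

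The only real nuisance is bookkeeping: the functions $h_r$, $h_{2r}$, $h_{3r}$ each switch formula at their own threshold, so one must be careful when identifying $\partial H(y,\rho)$ as a level set of the single function $\sigma$ and when checking the two ratio bounds. The Grushin-homogeneity $\sigma(\delta_t x,\delta_t y)=t\,\sigma(x,y)$, together with $H(\delta_t y, tr)=\delta_t H(y,r)$, confirms that $|y_1|/r$ is the only relevant parameter, so the four subcases above are indeed exhaustive.
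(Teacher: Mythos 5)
Your proposal is correct and follows essentially the same route as the paper: in both cases the key observation is that each boundary $\partial H(y,\rho)$ is a level set of $\sigma$, so $M_1,M_2$ are fixed by a $2\times 2$ linear system and $\gamma$ is read off from the value of $\Phi$ on $\partial H(y,2r)$, with a four-way case split according to where $|y_1|$ sits relative to $r,2r,3r$. Your use of the single quantity $s_\rho$ (which equals $\rho$ or $\sqrt{\rho|y_1|}$ depending on the switch in $h_\rho$) packages the paper's four explicit computations into the uniform formulas $M_2=(s_r^\alpha-s_{3r}^\alpha)^{-1}$, $M_1=s_{3r}^\alpha M_2$, and $\inf\Phi=\frac{(s_{3r}/s_{2r})^{-\alpha}-1}{(s_{3r}/s_r)^{-\alpha}-1}$, then obtains $\gamma$ from the universal bounds $s_{3r}/s_{2r}\geq\sqrt{3/2}$ and $s_{3r}/s_r\leq 3$; this is a modest streamlining, and incidentally flags a small typo in the paper's Case IV (where $M_2$ should involve $(3r)^\alpha$, not $(3r)^{\alpha/2}$, consistently with the paper's own $M_1$ in that case).
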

\begin{proof}
We choose $M_1$ and $M_2$ such that $\Phi\mid_{\partial H(y,3r)}=0$ and $\Phi\mid_{\partial H(y,r)}=1$ and we show that they are positive. Since $H(y,r)$ are defined as sublevel set of the function $h_r(x,y)$ and the definition of this function changes in case $|y_1|<r$ or $|y_1|\geq r$, we have to distinguish four cases.
\begin{itemize}
\item[Case I] $|y_1|<r$, we have\\
$M_1=\frac{3^{\alpha}}{1-3^{\alpha}}>0$, $M_2=\frac{1}{r^{\alpha}(1-3^{\alpha})}>0$ and we define $M_3:=\Phi\mid_{\partial H(y,2r)}=\frac{2^{\alpha}-3^{\alpha}}{1-3^{\alpha}}>0$\\
\item[Case II] $3r\leq |y_1|$, we have\\
$M_1=\frac{3^{\alpha/2}}{1-3^{\alpha/2}}>0$, $M_2=\frac{1}{(r|y_1|)^{\alpha/2}(1-3^{\alpha/2})}>0$ and we define $M_3:=\Phi\mid_{\partial H(y,2r)}=\frac{2^{\alpha/2}-3^{\alpha/2}}{1-3^{\alpha/2}}>0$
\item[Case III] $r\leq |y_1|< 2r$, we have\\
$M_1=\frac{3^{\alpha}}{(|y_1|/r)^{\alpha/2}-3^{\alpha}}>0$, $M_2=\frac{1}{(r|y_1|)^{\alpha/2}-(3r)^{\alpha}}>0$ \\
and we define $M_3:=\Phi\mid_{\partial H(y,2r)}=\frac{2^{\alpha}-3^{\alpha}}{(|y_1|/r)^{\alpha/2}-3^{\alpha}}\geq\frac{2^{\alpha}-3^{\alpha}}{1-3^{\alpha}}>0$
\item[Case IV] $2r\leq |y_1|<3r$, we have\\
$M_1=\frac{3^{\alpha}}{(|y_1|/r)^{\alpha/2}-3^{\alpha}}>0$, $M_2=\frac{1}{(r|y_1|)^{\alpha/2}-(3r)^{\alpha/2}}>0$ \\
and we define $M_3:=\Phi\mid_{\partial H(y,2r)}=\frac{(2|y_1|/r)^{\alpha/2}-3^{\alpha}}{(|y_1|/r)^{\alpha/2}-3^{\alpha}}\geq \frac{6^{\alpha/2-3^{\alpha}}}{2^{\alpha/2}-3^{\alpha}}>0$
\end{itemize}
If we define $\gamma:=\lbrace \min\frac{2^{\alpha}-3^{\alpha}}{1-3^{\alpha}}, \frac{2^{\alpha/2}-3^{\alpha/2}}{1-3^{\alpha/2}}, \frac{6^{\alpha /2}-3^{\alpha}}{3^{\alpha/2}-3^{\alpha}}\rbrace $ we have $\Phi\mid_{\partial H(y,2r)}=M_3>\gamma$, and since $\Phi$ is constant on the sets $\partial H(y,\rho)$ and decreasing with respect to $\rho>0$ get $\Phi\mid_{R(y,r,2r)}>\gamma$. Finally, by Lemma \ref{lem: barrier DB}, $L\Phi\geq 0$ on $R(y,r,3r)$, concluding the proof. 
\end{proof}

\begin{thm}\label{thm: DB in H}(Double ball property in $H(y,3r)$)
Let $C$ and $\gamma$ be as in Theorem \ref{thm ABP} and \ref{thm barrier DB} respectively and define $\e_0=\frac{\gamma}{2C}<1$. Then if $H(y,3r)\subset\Omega$ and $u$ is a non negative classical solution of $Lu=x_1^2f$ in $H(y,3r)$ satisfying 
\begin{equation*}
\inf_{H(y,r)} u\geq 1\quad\text{and}\quad\diam{H(y,3r)}\|x_1f\|_{L^2(H(y,3r))}<\e_0
\end{equation*}
we have
\begin{equation*}
\inf_{H(y,2r)} u\geq \delta
\end{equation*}
where $0<\delta<1$ is a constant depending on $\e_0$.
\end{thm}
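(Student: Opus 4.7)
The plan is to compare $u$ with the barrier $\Phi = M_2\sigma^\alpha - M_1$ supplied by Theorem \ref{thm barrier DB} on the annular region $R := R(y,r,3r) = H(y,3r)\setminus\ol{H(y,r)}$, and then use the weighted ABP maximum principle (Theorem \ref{thm ABP}) to convert the smallness of $\|x_1 f\|_{L^2}$ into a uniform lower bound on $u-\Phi$. The constant $\e_0=\gamma/(2C)$ in the statement is calibrated precisely so that the ABP-controlled error consumes at most half of the barrier's lower bound $\gamma$ on $R(y,r,2r)$.

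First I set $w := u - \Phi$ on $R$. Since $Lu = x_1^2 f$ and $L\Phi \geq 0$ on $R$, one obtains $Lw \leq x_1^2 f$ in $R$. On the outer boundary $\partial H(y,3r)$, $\Phi \equiv 0$ and the hypothesis $u\in\Kf$ gives $u\geq 0$, so $w \geq 0$; on the inner boundary $\partial H(y,r)$, $\Phi \equiv 1$ while the hypothesis $\inf_{H(y,r)} u \geq 1$ extends to the closure by continuity of $u$ (note $\ol{H(y,r)}\subset H(y,3r)\subset\Omega$), so again $w\geq 0$. Thus $w \in C^2(R)\cap C(\ol{R})$ is nonnegative on $\partial R$ and satisfies the sub-solution inequality, which are exactly the hypotheses of Theorem \ref{thm ABP} on the bounded domain $R$. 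Using $\diam{R}\leq\diam{H(y,3r)}$, $f^+\leq|f|$, and the restriction to $R$ in the ABP integral, the maximum principle yields
\[
\sup_{R} w^- \;\leq\; C\,\diam{R}\,\|x_1 f^+\|_{L^2(R)} \;\leq\; C\,\diam{H(y,3r)}\,\|x_1 f\|_{L^2(H(y,3r))} \;<\; C\e_0 \;=\; \tfrac{\gamma}{2},
\]
so $u \geq \Phi - \gamma/2$ pointwise on $R$.

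The conclusion then follows by reading off the barrier's lower bound: by the last item of Theorem \ref{thm barrier DB}, $\Phi \geq \gamma$ on $R(y,r,2r)$, hence $u \geq \gamma/2$ there; on $\ol{H(y,r)}$ the hypothesis already gives $u \geq 1 \geq \gamma/2$. Since $H(y,2r) = \ol{H(y,r)} \cup R(y,r,2r)$, setting $\delta := \gamma/2$ yields $\inf_{H(y,2r)} u \geq \delta$, with $0 < \delta < 1$ because $0 < \gamma < 1$.

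The most delicate point is not in the bookkeeping above but earlier, in the construction of $\Phi$ carried out in Lemma \ref{lem: barrier DB} and Theorem \ref{thm barrier DB}: the choice $\Phi = M_2\sigma^\alpha - M_1$ with $\sigma$ given by \eqref{def: sigma} and $\alpha\leq 4-10\Lambda/\lambda$, together with the case-by-case calibration of $M_1,M_2$, is exactly what simultaneously makes $\Phi$ an $L$-subsolution on $R$ and produces the positive lower bound $\inf_{R(y,r,2r)}\Phi\geq\gamma$. Once this barrier is in hand, the present statement reduces to the short ABP-plus-boundary-values argument sketched above. A secondary point worth verifying is the applicability of Theorem \ref{thm ABP} on the non-simply-connected annulus $R$, but this only requires $R$ to be a bounded open set, which is immediate.
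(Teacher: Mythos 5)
Your proof is correct and follows the same route as the paper: compare $u$ with the barrier $\Phi$ from Theorem \ref{thm barrier DB} on the annulus $R(y,r,3r)$, apply the weighted ABP maximum principle of Theorem \ref{thm ABP} to $u-\Phi$ to get $\sup_R(\Phi-u)^+\leq C\e_0=\gamma/2$, and then read off the lower bound $\gamma$ on $\Phi$ over $R(y,r,2r)$ together with $u\geq 1$ on $H(y,r)$, yielding $\delta=\gamma/2$. The observations you flag (extending the inner-boundary bound by continuity, and the applicability of ABP on the non-simply-connected annulus) are exactly the right hygiene checks and are implicitly used in the paper's argument.
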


\begin{proof}
Let $\Phi$ be the barrier function defined in Theorem  \ref{thm barrier DB} and consider $\omega=u-\Phi$. Since $\omega\in C^2(R(y,r,3r))\cap C(\ol{R(y,r,3r)})$, $\omega\geq 0$ on $\partial R(y,r,3r)$ and $L\omega\leq x_1^2f$ in $R(y,r,3r)$ we can apply the weighted ABP maximum principle Theorem \ref{thm ABP} to $\omega$
\begin{align*}
\sup_{R(y,r,3r)}((\Phi-u)^+)&=\sup_{R(y,r,3r)}\omega^-\\
&\leq C \text{diam}(R(y,r,3r))\int_{R(y,r,3r)}(x_1f^+)^2\;dx\\
&\leq C\e_0
\end{align*}
in particular
\begin{equation*}
\sup_{R(y,r,2r)}((\Phi-u)^+)\leq C \e_0.
\end{equation*}
Writing $-u=\Phi-u-\Phi$ and taking the supremum over $R(y,r,2r)$ we find
\begin{align*}
\sup_{R(y,r,2r)}(-u)&\leq \sup_{R(y,r,2r)}(\Phi-u)+\sup_{R(y,r,2r)}(-\Phi)\\
\intertext{i.e.}
-\inf_{R(y,r,2r)}u &\leq C \e_0-\inf_{R(y,r,2r)}\Phi.
\end{align*}
It suffices to recall that $\inf_{R(y,r,2r)}\Phi=\gamma$ and the definition of $\e_0$ to get
\begin{equation*}
\inf_{R(y,r,2r)}u\geq \frac{\gamma}{2},
\end{equation*}
moreover, since by hypotheses $\inf_{H(y,r)}u\geq 1$ the thesis is proved. 
\end{proof}

\begin{thm}\label{thm DB in B}(Double ball property in $B(y,\eta_{DB} r)$)
There exist structural constants $0<\e_{DB},\gamma<1$ and $\eta_{DB}>2$ such that if $u$ is a non negative classical solution of $Lu=x_1^2f$ in $B(y,\eta_{DB} r)$ satisfying 
\begin{equation*}
\inf_{B(y,r)} u\geq 1,\quad\text{and}\quad\diam{B(y,\eta_{DB} r)}\|x_1f\|_{L^2(B(y,\eta_{DB} r))}<\e_{DB}
\end{equation*}
then
\begin{equation*}
\inf_{B(y,2r)} u\geq \gamma.
\end{equation*}
More precisely we have $\eta_{DB}=12C^2$ with $C=C_HC_B$ and $C_H,C_B$ defined in Theorems \ref{thm structure Box H} and \ref{thm structure Box B}; $\e_{DB}=\e\delta^{p-1}$ and $\gamma=\delta^p$ where $p$ is chosen so that $2^{p-1}(C_HC_3)^{-1} \leq 2C_HC_3\leq 2^p(C_HC_3)^{-1} $; $\e$ and $\delta$ are defined in Theorem \ref{thm: DB in H}. 
\end{thm}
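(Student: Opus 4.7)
The plan is to transfer the double ball property of Theorem~\ref{thm: DB in H} from the sublevel sets $H$ to the quasi-metric balls $B$ by a dyadic iteration along a nested chain of sublevel sets. Setting $C := C_H C_B$ and $r_k := 2^k r/C$, the Structure Theorems~\ref{thm structure Box H} and~\ref{thm structure Box B} immediately give the inclusions
\begin{equation*}
H(y, r_0) \subset \mathrm{Box}(y, r/C_B) \subset B(y, r), \qquad B(y, 2r) \subset \mathrm{Box}(y, 2 C_B r) \subset H(y, 2 C r).
\end{equation*}
Choosing $p$ as in the statement so that $r_{p-1} \leq 2 C r \leq r_p$ (equivalently $2C^2 \leq 2^p \leq 4C^2$) therefore nests $B(y, 2r)$ inside $H(y, r_p)$, so one can start the iteration in a sublevel contained in $B(y,r)$ and finish in one that contains $B(y, 2r)$.

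The auxiliary enlarged sets appearing at the $k$-th step of the iteration are $H(y, 3 r_k)$ for $k = 0, 1, \dots, p-1$. Applying the structure theorems once more, $H(y, 3 r_k) \subset \mathrm{Box}(y, 3 C_H r_k) \subset B(y, 3 C r_k) = B(y, 3 \cdot 2^k r) \subset B(y, 6 C^2 r)$, so the choice $\eta_{DB} = 12 C^2$ makes every $H(y, 3 r_k)$ sit comfortably inside $B(y, \eta_{DB} r) \subset \Omega$. This is the geometric input that allows Theorem~\ref{thm: DB in H} to be invoked at each step.

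The iteration itself proceeds by induction. From $\inf_{B(y,r)} u \geq 1$ and $H(y, r_0) \subset B(y,r)$ we have $\inf_{H(y, r_0)} u \geq 1$. Assuming $\inf_{H(y, r_k)} u \geq \delta^k$, apply Theorem~\ref{thm: DB in H} to the rescaled function $v := u/\delta^k$, which is a nonnegative classical solution of $L v = x_1^2 (f/\delta^k)$ with $\inf_{H(y, r_k)} v \geq 1$. The smallness hypothesis required by Theorem~\ref{thm: DB in H} is
\begin{equation*}
\diam{H(y, 3 r_k)}\, \| x_1 f/\delta^k \|_{L^2(H(y, 3 r_k))} \leq \delta^{-k}\, \diam{B(y, \eta_{DB} r)}\, \| x_1 f \|_{L^2(B(y, \eta_{DB} r))} < \delta^{-k} \e_{DB} = \delta^{p-1-k} \e,
\end{equation*}
which is at most $\e$ for every $k \leq p-1$ thanks to $\e_{DB} = \e \delta^{p-1}$. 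Theorem~\ref{thm: DB in H} then gives $\inf_{H(y, 2 r_k)} v \geq \delta$, i.e.\ $\inf_{H(y, r_{k+1})} u \geq \delta^{k+1}$. After $p$ iterations we obtain $\inf_{H(y, r_p)} u \geq \delta^p$; combined with $B(y, 2r) \subset H(y, r_p)$ this yields $\inf_{B(y, 2r)} u \geq \delta^p = \gamma$.

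The main technical point is the bookkeeping that propagates the smallness of the right-hand side through the $p$ renormalizations: each step replaces $u$ by $u/\delta$, which divides the admissible $L^2$-norm of $x_1 f$ by $\delta$, and accumulating over $p$ steps produces exactly the cumulative factor $\delta^{p-1}$ in the definition of $\e_{DB}$. The geometric choices (dyadic radii $r_k = 2^k r/C$, an integer $p$ of order $\log_2 C$, and $\eta_{DB} = 12 C^2$) are dictated by the two structure-theorem inclusions above and by the requirement that every enlarged set $H(y, 3 r_k)$ used along the chain remain inside $B(y, \eta_{DB} r)$; the factor $12 C^2$ leaves a comfortable margin over the necessary $6 C^2$.
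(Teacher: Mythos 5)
Your proposal is correct and follows essentially the same route as the paper: both use the structure theorems to sandwich $B(y,r)$ and $B(y,2r)$ between an initial and a terminal sublevel set of $h$, then iterate Theorem~\ref{thm: DB in H} along a dyadic chain $H(y,r_k)$ of radii $r_k=2^kr/C$, rescaling by $\delta^{-k}$ at step $k$ and compensating with the factor $\delta^{p-1}$ in $\varepsilon_{DB}$ so that the $L^2$-smallness hypothesis is preserved at every stage. The only difference is presentational (your explicit $r_k$ notation and inductive phrasing versus the paper's in-line bookkeeping).
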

\begin{proof}
First of all we notice that the definition of $\eta_{DB}$ and $p$ imply $H(y,2^{p}C^{-1}3r)\subset B(y,\eta_{DB} r)$, consequently
\begin{equation}\label{eq norma f DB in B}
\diam{H(y,2^{k}C^{-1}3r)}\|x_1f\|_{L^2(H(y,2^{k}C^{-1}3r)}\leq \diam{B(y,\eta r)}\|x_1f\|_{L^2(B(y,\eta r))}<\e\delta^{p-1}\leq \e\delta^k 
\end{equation}
for $0\leq k\leq p-1$. Moreover, since $\inf_{B(y,r)}u\geq 1$, $H(y,C^{-1}r)\subset B(y,r)$ and \eqref{eq norma f DB in B} holds, we can apply the double ball property in $H(y,3C^{-1}r )$ (Theorem \ref{thm: DB in H}) obtaining $$\inf_{H(y,C^{-1}2r)}u\geq \delta. $$
In virtue of \eqref{eq norma f DB in B}  we repeatedly apply ($p-1$ times) Theorem \ref{thm: DB in H} to $\frac{u}{\delta^k}$ in $H(y,2^kC^{-1}3r) $ where $0<k\leq p-1$ and get 
$$\inf_{H(y,C^{-1}2^p r)}u\geq \delta^p. $$
Recalling the definition of $p$ and Theorems \ref{thm structure Box H} and \ref{thm structure Box B} we get the thesis. 
\end{proof}

\subsection{Critical Density}
In this subsection we prove critical density property for balls $B_r(y)$. Exactly as in \cite{AM} we obtain this property for balls $G((y_1,0),r)\subset\Omega$ centered on the $x$ axes and then extend the result to every set $G(y,r)\subset\Omega$ by a dilation and translation argument. Once one has obtained the critical density property for every set $G(y,r)\subset\Omega$, structure Theorems \ref{thm structure Box G} and \ref{thm structure Box B} easily imply the property on balls $B_r(x)\subset\Omega$.

\begin{lem}[\cite{AM}, Lemma 4.2]\label{lem bar}
There exist positive structural constants $\tilde{C}>0$ and $M>1$ such that for every $y=(y_1,0)\in\mathbb{R}^2$ and $r>0$ there is a $C^2$ function $\tilde{\phi}:\mathbb{R}^2\to\mathbb{R}$ such that
\begin{align*}
&\tilde{\phi}\geq 0,	& 	&\text{in }\mathbb{R}^2\setminus\tilde{G}(y,2r),\\
&\tilde{\phi}\leq 2,	&	&\text{in }\tilde{G}(y,r),\\
&\tilde{\phi}\geq -M,	&	&\text{in }\mathbb{R}^2\\
&L\tilde{\phi}(x)\leq \tilde{C}\frac{x_1^2}{r^2(r+|y_1|)^2}\zeta(x),	&	&\text{in }\mathbb{R}^2
\end{align*}
where $0\leq\zeta\leq 1$ is a continuous function in $\mathbb{R}^2$ with $\text{supp}\, \zeta\subset\ol{\tilde{G}(y,r)}$.
\end{lem}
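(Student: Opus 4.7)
The plan is to construct $\tilde{\phi}$ explicitly as a suitable power of the gauge function $\tilde{g}_r(\cdot,y)$, following the pattern of the subsolution built in Lemma \ref{lem: barrier DB} (but with the opposite sign requirement on $L\tilde{\phi}$). First I would exploit the Grushin dilations $(\delta_t)_{t>0}$ defined in \eqref{dilatations} together with translation in $x_2$ to reduce to canonical cases. Since $X$ and $Y$ are $\delta_t$-homogeneous of degree one, the operator $L$ is of degree two; moreover $\tilde{g}_r(\delta_t x, \delta_t y) = t\, \tilde{g}_{r/t}(x,y)$ in the regime $|y_1|<r$ and transforms analogously in the other regime, while the target bound $\tilde{C}x_1^2 r^{-2}(r+|y_1|)^{-2}$ is itself $\delta_t$-covariant. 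This reduces the construction to the two model cases $(y,r)=((0,0),1)$ and $(y,r)=((1,0),1)$, up to structural constants.

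In each model case I would take
\begin{equation*}
\tilde{\phi}(x) \;=\; M_2 - M_1\, \tilde{g}_r(x,y)^{-\alpha},
\end{equation*}
with $\alpha>0$ a structural constant depending only on the ellipticity ratio $\Lambda/\lambda$. A direct computation, entirely parallel to the one carried out for $\sigma^\alpha$ in Lemma \ref{lem: barrier DB}, shows that $L\tilde{g}_r^{-\alpha}\ge 0$, hence $L\tilde{\phi}\le 0$, away from the singular set $\{\tilde{g}_r=0\}$, provided $\alpha$ is chosen large enough to absorb the positive $a_{22}$-contribution coming from $Y^2$. The constants $M_1,M_2$ are then fixed so that $\tilde{\phi}=0$ on $\partial\tilde{G}(y,2r)$ and $\tilde{\phi}\le 2$ on $\partial \tilde{G}(y,r)$; monotonicity of $t\mapsto -t^{-\alpha}$ in $t$ then gives $\tilde{\phi}\ge 0$ on $\mathbb{R}^2\setminus\tilde{G}(y,2r)$ and $\tilde{\phi}\le 2$ on $\tilde{G}(y,r)$.

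The delicate step is that $\tilde{\phi}$ as written is singular at the zeros of $\tilde{g}_r$ (a single point when $|y_1|<r$, a vertical segment when $|y_1|\ge r$) and need not be $C^2$ across $\partial \tilde{G}(y,2r)$. I would resolve both issues by postcomposing with a smooth, non-decreasing, concave cutoff $\chi:\mathbb{R}\to\mathbb{R}$ satisfying $\chi(s)=s$ for $s\ge -M+1$ and $\chi(s)=-M$ for $s\le -M$, and extending by $0$ outside $\tilde{G}(y,2r)$ after verifying that the $C^2$-matching at $\partial\tilde{G}(y,2r)$ can be arranged by suitable choice of $\chi$. Since $\chi$ is nondecreasing and concave,
\begin{equation*}
L(\chi(\tilde{\phi}))\;=\;\chi'(\tilde{\phi})\,L\tilde{\phi}\;+\;\chi''(\tilde{\phi})\,\langle A\nabla\tilde{\phi},\nabla\tilde{\phi}\rangle\;\le\;\chi'(\tilde{\phi})\,L\tilde{\phi},
\end{equation*}
so the supersolution inequality $L(\chi(\tilde{\phi}))\le 0$ persists on the region where $\chi'>0$ and $\tilde{g}_r(x,y)>r$. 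The positive residual produced by the truncation is concentrated precisely where $\chi'<1$, i.e.\ in $\overline{\tilde{G}(y,r)}$, and provides the support property required of $\zeta$.

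The main obstacle, and where most of the technical effort lies, is pinning down the pointwise bound $L\tilde{\phi}(x)\le \tilde{C} x_1^2 r^{-2}(r+|y_1|)^{-2}\zeta(x)$ with the correct scaling in both $r$ and $|y_1|$. After scaling back from the normalized case via $\delta_t$, the factor $r^{-2}$ comes from the degree-two homogeneity of $L$, while the factor $(r+|y_1|)^{-2}$ arises from the change of definition of $\tilde{g}_r$ across $|y_1|=r$, which must be tracked carefully; the factor $x_1^2$ is forced by the $x_1$-degeneracy of $Y=x_1\partial_{x_2}$. Verifying this scaling requires explicit computation of $L\rho^{-\alpha}$ and $L(\rho^2/|y_1|)^{-\alpha}$ in each regime, analogous to but more delicate than the calculation for $\sigma^\alpha$ in Lemma \ref{lem: barrier DB}, because the zero set of $\rho$ now meets the characteristic line $\{x_1=0\}$ and the two singularities must be kept disentangled.
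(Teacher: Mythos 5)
The paper does not actually prove this lemma---it is quoted verbatim from \cite{AM}, so there is no in-paper proof to compare against. Your overall strategy (take a negative power of the gauge $\tilde{g}_r$, pick $\alpha$ large enough depending on $\Lambda/\lambda$, normalize by scaling, truncate near the pole) is the right one and matches the style of the barrier in Lemma \ref{lem: barrier DB}. However, there is a genuine error in the truncation step, and it is not a cosmetic one.

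You impose $\chi(s)=s$ for $s\ge -M+1$ and $\chi(s)=-M$ for $s\le -M$, and you call $\chi$ concave. These requirements force $\chi$ to be \emph{convex}: on the transition interval $[-M,-M+1]$ the derivative $\chi'$ must rise from $0$ to $1$, so $\chi''\ge 0$. Consequently your claimed inequality $L(\chi(\tilde{\phi}))\le \chi'(\tilde{\phi})\,L\tilde{\phi}\le 0$ has the wrong sign; the correct conclusion from the chain rule
\begin{equation*}
L(\chi(\tilde{\phi}))=\chi'(\tilde{\phi})\,L\tilde{\phi}+\chi''(\tilde{\phi})\,\bigl(a_{11}(X\tilde{\phi})^2+2a_{12}(Y\tilde{\phi})(X\tilde{\phi})+a_{22}(Y\tilde{\phi})^2\bigr)
\end{equation*}
is $L(\chi(\tilde{\phi}))\le \chi''(\tilde{\phi})\bigl(a_{11}(X\tilde{\phi})^2+\cdots\bigr)$, and this nonnegative term, supported where $\chi''>0$, i.e.\ inside $\overline{\tilde{G}(y,r)}$, is exactly the positive residual that becomes the $\tilde{C}\,x_1^2 r^{-2}(r+|y_1|)^{-2}\zeta$ on the right-hand side. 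The positivity of this residual is not optional: since $a_{11}\ge\lambda>0$, $L$ obeys the weak minimum principle (add $\e e^{Kx_1}$ and let $\e\to 0^+$), so any $C^2$ function with $L\tilde{\phi}\le 0$ throughout $\tilde{G}(y,2r)$ and $\tilde{\phi}\ge 0$ on $\partial\tilde{G}(y,2r)$ would satisfy $\tilde{\phi}\ge 0$ in all of $\tilde{G}(y,2r)$, which is incompatible with $\tilde{\phi}$ being strictly negative in $\tilde{G}(y,r)$ (the ``$\le 2$'' in the statement must be read as $\le -2$, as the use in Theorem \ref{thm rough critical density} makes clear). So a construction with $L(\chi(\tilde\phi))\le 0$ everywhere cannot exist, and the sentence ``the supersolution inequality persists'' followed by ``the positive residual produced by the truncation'' is internally contradictory. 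Two smaller points: the zero set of $\tilde{g}_r(\cdot,y)$ is the pair of points $(\pm y_1,0)$, not a vertical segment; and no $C^2$-matching across $\partial\tilde{G}(y,2r)$ is needed---$\tilde{\phi}_0=M_2-M_1\tilde{g}_r^{-\alpha}$ is already smooth there and $\chi$ is the identity on that range, so $\chi(\tilde\phi_0)$ is automatically $C^2$ on a neighborhood of the boundary. The real technical work that remains once the convexity is corrected is to verify, separately in the regimes $|y_1|<r$ and $|y_1|\ge r$ and using that $\partial_{x_1}\rho^{-\alpha}$ carries a factor $x_1$ (because $\rho^4$ is a polynomial in $x_1^2$), that $\chi''(\tilde\phi_0)\bigl(a_{11}(X\tilde\phi_0)^2+\cdots\bigr)$ is indeed bounded by $\tilde{C}x_1^2 r^{-2}(r+|y_1|)^{-2}$ on the transition region.
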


\begin{thm}\label{thm rough critical density}
Define $\tilde{\e}=(2C)^{-1}$ where $C>0$ is the structural constant appearing in Theorem \ref{thm ABP}. Let $y=(y_1,0)$, $r>0$ and $u\in C^2(G(y,2r))\cap C(\ol{G(y,2r)})$ be a non negative solution of $Lu\leq x_1^2f$ in $G(y,2r)$ satisfying
\begin{equation*}
\inf_{G(y,r)}u \leq 1\quad\text{and}\quad\diam{G(y,2r)}\|x_1f\|_{L^2(G(y,2r))} <\tilde{\e} \label{norma f < eps} 
\end{equation*}
Then there exist $0<\nu<1$, depending on $\tilde{\e}$ and $M>1$ structural constant such that
\begin{gather}
|\{u\leq M\}\cap G(y,3r/2)| \geq \frac{\nu}{\max\bigl\{r+|y_1|,\frac{1}{r+|y_1|}\bigr\}}|G(y,3r/2)|.\label{tesi stima dal basso}
\end{gather}
\end{thm}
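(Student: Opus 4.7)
The plan is to adapt the Caffarelli barrier argument, with the Euclidean ABP replaced by the weighted ABP of Theorem \ref{thm ABP}. Set $v := u + \tilde\phi$ on $\overline{G(y,2r)}$, where $\tilde\phi$ is the barrier of Lemma \ref{lem bar}. Since $\tilde\phi \geq 0$ outside $\tilde G(y,2r) \supseteq G(y,2r)$, continuity yields $\tilde\phi \geq 0$, and hence $v \geq 0$, on $\partial G(y,2r)$; simultaneously
$$Lv \leq x_1^2\Bigl(f + \tilde C\,\frac{\zeta}{r^2(r+|y_1|)^2}\Bigr) =: x_1^2 F \qquad \text{in } G(y,2r).$$
Theorem \ref{thm ABP} applied to $v$ then gives
$$\sup_{G(y,2r)} v^- \leq C\operatorname{diam}(G(y,2r)) \left(\int_{\{v=\Gamma_v\}\cap G(y,2r)} (x_1 F^+)^2\, dx\right)^{1/2},$$
and the convex envelope $\Gamma_v$ of $-v^-$ being non-positive forces $\{v=\Gamma_v\}\cap G(y,2r) \subseteq \{v\leq 0\} \subseteq \{u\leq -\tilde\phi\}\subseteq\{u\leq M\}$, using $\tilde\phi \geq -M$ from Lemma \ref{lem bar}. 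Thus a lower bound on the measure of the contact set will translate into a lower bound on $|\{u\leq M\}\cap G(y,3r/2)|$.

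For the lower bound on $\sup v^-$, I exploit the negative dip of the barrier on $\tilde G(y,r)$ inherent in the Caffarelli-type construction of Lemma \ref{lem bar}: combined with $\inf_{G(y,r)}u\leq 1$, this yields a point $x_0\in G(y,r)$ with $v(x_0)\leq -c_0$ for a structural $c_0>0$, hence $\sup_{G(y,2r)}v^-\geq c_0$. To estimate the right-hand side, I split the contact set into $S_1 := \{v=\Gamma_v\}\cap\tilde G(y,r)$ and $S_2 := \{v=\Gamma_v\}\setminus\tilde G(y,r)$. On $S_2$ the support condition $\mathrm{supp}\,\zeta\subseteq\overline{\tilde G(y,r)}$ kills the barrier's contribution, and that piece is absorbed by $C\operatorname{diam}(G(y,2r))\|x_1 f\|_{L^2(G(y,2r))} < C\tilde\e = \tfrac12$. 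On $S_1$ I use $|x_1|\leq \tilde c(r+|y_1|)$ from \eqref{sup x-1 su G} and $\operatorname{diam}(G(y,2r)) \leq 2C_M\max\{r, r(r+|y_1|)\}$ from \eqref{diam G} to obtain
$$c_0 \leq \tfrac12 + C'\,\frac{\max\{1,r+|y_1|\}}{r(r+|y_1|)}\,|S_1|^{1/2}.$$
This forces $|S_1| \geq c\, r^2(r+|y_1|)^2/\max\{1,(r+|y_1|)^2\}$; dividing by the upper bound $|G(y,3r/2)| \leq c'\,r^2(r+|y_1|)$ from \eqref{area G} produces exactly the weight $1/\max\{r+|y_1|,(r+|y_1|)^{-1}\}$ in \eqref{tesi stima dal basso}, provided one checks that $S_1$ is, up to a structural multiple, a subset of $G(y,3r/2)$.

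The main obstacle I anticipate is this final balancing. The anisotropic factor $\min\{r+|y_1|,(r+|y_1|)^{-1}\}$ in the conclusion is unavoidable and reflects the sharp scaling mismatch between diameter and Lebesgue measure of the sublevel sets $G(y,r)$: nearly isotropic boxes when $|y_1|\ll r$, highly elongated along the $x_2$-direction when $|y_1|\gg r$. A secondary subtlety is that when $|y_1|\geq r$ the set $\tilde G(y,r)$ decomposes into two lobes while $G(y,r)$ consists of only one; one must verify that the portion of $S_1$ lying in the "wrong" lobe does not escape $G(y,2r)$, so that $S_1\cap G(y,3r/2)$ still carries the mass estimate needed for the conclusion.
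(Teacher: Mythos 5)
Your proposal follows essentially the same path as the paper's proof: form $w=u+\tilde\phi$ with the barrier of Lemma \ref{lem bar}, invoke the weighted ABP of Theorem \ref{thm ABP}, split the right-hand side so that the $f$ contribution is absorbed by $C\tilde\e=\tfrac12$, and bound the $\zeta$ contribution (supported in $\overline{\tilde G(y,r)}$) via \eqref{sup x-1 su G}, \eqref{diam G}, \eqref{area G} to extract exactly the anisotropic weight $1/\max\{r+|y_1|,(r+|y_1|)^{-1}\}$. The residual step you flag — passing from the contact set in $\tilde G(y,r)$ to $G(y,3r/2)$, including the two-lobe issue when $|y_1|\geq r$ — is precisely what the paper disposes of by citing the argument of \cite[Theorem 5.1]{AM}, so your reading of where the remaining work lies is accurate.
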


\begin{proof}
The function $w:=u+\tilde{\phi}$ with $\tilde{\phi}$ as in Lemma \ref{lem bar} satisfies $Lw\leq x_1^2\Bigl(f+ \zeta(x)\frac{\tilde{C}}{r^2(r+|y_1|)^2}\Bigr)$ in $G(y,2r)$, $w\geq 0$ on $\partial G(y,2r)$, and
\begin{equation*}
\inf_{G(y,r)}w \leq \inf_{G(y,r)}u-2\leq -1.
\end{equation*}
Thus it is straightforward to apply weighted ABP maximum principle Theorem \ref{thm ABP} in $G(y,2r)$ and get
\begin{equation*}
\begin{split}
1&\leq \sup_{G(y,2r)}w^-\\
&\leq C\diam{G(y,2r)}\Biggl(\int_{\{w=\Gamma_w\}\cap G(y,2r)} \biggl(x_1f+\zeta(x)\frac{\tilde{C}x_1}{r^2(r+|y_1|)^2} \biggr)^2 \;dx\Biggr)^{1/2}\\
&\leq C\diam{G(y,2r)}\Biggl(\|x_1f\|_{L^2(G(y,2r))}+\frac{\tilde{C}}{r^2(r+|y_1|)^2}\biggl(\int_{\{w=\Gamma_w\}\cap G(y,2r)} \Bigl(\zeta(x)x_1\Bigr)^2 \;dx\biggr)^{1/2}\Biggr)\\
&\leq C\tilde{\e}+C\tilde{C}\frac{\diam{G(y,2r)}}{r^2(r+|y_1|)^2}\biggl(\int_{\{w=\Gamma_w\}\cap G(y,2r)} \Bigl(\zeta(x)x_1\Bigr)^2 \;dx\biggr)^{1/2}\\
&\leq C\tilde{\e}+ C\tilde{C}C_M\max\{r,r(r+|y_1|)\}\frac{\tilde{c}(r+|y_1|)}{r^2(r+|y_1|)^2}\Bigl(\int_{\{w=\Gamma_w\}\cap G(y,2r)}\zeta^2\;dx \Bigr)^{1/2}.
\end{split}
\end{equation*}
Last estimate takes into account Remark \ref{rmk stima dal basso diam G}. Moreover by Lemma \ref{lem bar}, $0\leq \zeta\leq 1$ and $\text{supp } \zeta\subset \ol{\tilde{G}(y,r)}$ so that keeping in mind \eqref{area G}
\begin{align*}
1-C\tilde{\e}&\leq K\frac{\max\{1,r+|y_1|\}}{r(r+|y_1|)}|{\{w=\Gamma_w\}\cap G(y,2r)}\cap\tilde{G}(y,r)|^{1/2}\\
&\leq K\frac{\max\{1,r+|y_1|\}}{r(r+|y_1|)}r(r+|y_1|)^{1/2}\frac{|{\{w=\Gamma_w\}\cap G(y,2r)}\cap\tilde{G}(y,r)|^{1/2}}{|G(y,r)|^{1/2}}\\
&\leq K\frac{\max\{1,r+|y_1|\}}{(r+|y_1|)^{1/2}}\frac{|{\{w=\Gamma_w\}\cap G(y,2r)}\cap\tilde{G}(y,r)|^{1/2}}{|G(y,r)|^{1/2}}.
\end{align*}
Where $K>1$ is a structural constant. Since $w=\Gamma_w$ implies $w\leq 0$ and consequently $u\leq -\phi\leq M $, recalling the definition of $\tilde{\e}$ we obtain 
\begin{equation*}
|{\{u\leq M\}\cap G(y,2r)}\cap\tilde{G}(y,r)|^{1/2} \geq \frac{1}{2K} \frac{|G(y,r)|^{1/2}}{\max\{(r+|y_1|)^{-1/2},(r+|y_1|)^{1/2}\}}.
\end{equation*}
Now the proof proceeds exactly as in \cite[Theorem 5.1]{AM}.\\
\end{proof}

We use the same dilations and translations arguments developed in \cite[Theorem 5.2]{AM}, to extend Theorem \ref{thm rough critical density} to every sublevel set $G(y,r)$ and improve constant in \eqref{tesi stima dal basso}.

\begin{thm}\label{thm negaz CD in G}
Define $\e_0=\tilde{\e}\frac{C_m}{8C_M }$. Let $u\in C^2(G(y,2r))\cap C(\ol{G(y,2r)})$ with $y\in\mathbb{R}^2$, $r>0$ be a non negative solution to $Lu\leq x_1^2f$ in $G(y,2r)$ satisfying  
\begin{equation*}
\inf_{G(y,r)}u\leq 1,\quad \text{and} \quad \diam{G(y,2r)}\|x_1f\|_{L^2(G(y,2r))} <\e_0
\end{equation*}
then there exist structural constants $0<\e<1$, depending on $\e_0$ and $M>1$ such that 
\begin{equation*}
|\{u\leq M\}\cap G(y,3r/2 )|\geq \e|G(y,3r/2)|.
\end{equation*}
Here $C_m$,$C_M$ are the structural constants appearing in \eqref{diam G} and $\tilde{\e}$ is as in the statement of Theorem \ref{thm rough critical density}.
\end{thm}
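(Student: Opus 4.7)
My plan is to reduce the statement to Theorem \ref{thm rough critical density} through a two-step normalization, first a translation in the $x_2$-direction and then a Grushin dilation $\delta_\lambda$, following the template of \cite[Theorem 5.2]{AM} adapted to the non homogeneous right-hand side. I first note that $X = \partial_{x_1}$ and $Y = x_1 \partial_{x_2}$ are translation-invariant in $x_2$, and so is $L$; moreover $\rho$, $g_r$ and therefore $G(y, r)$ depend on $x_2$ only through $x_2 - y_2$, so $G((y_1, y_2), r) = (0, y_2) + G((y_1, 0), r)$. Translating $u$ and $f$ in the $x_2$-direction by $-y_2$ preserves both hypotheses and thesis, reducing the problem to $y = (y_1, 0)$, which is exactly the setting of Theorem \ref{thm rough critical density}.

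Next, I would set $\lambda := r + |y_1|$ and normalize this quantity to $1$ by a Grushin dilation. Define $v(x) := u(\delta_\lambda x)$, $y' := (y_1/\lambda, 0)$ and $r' := r/\lambda$ so that $r' + |y'_1| = 1$. The $1$-homogeneity of $X$ and $Y$ under $\delta_\lambda$ gives $Lv(x) \leq x_1^2 F(x)$ with $F(x) := \lambda^4 f(\delta_\lambda x)$, the rescaled operator having the same ellipticity constants $\lambda,\Lambda$. The identities $\rho(\delta_\lambda x, \delta_\lambda y) = \lambda\, \rho(x, y)$ and $g_{\lambda s}(\delta_\lambda x, \delta_\lambda y) = \lambda\, g_s(x, y)$ yield $G(y', s/\lambda) = \delta_{1/\lambda}\, G(y, s)$ for every $s > 0$, so the infimum hypothesis transforms trivially as $\inf_{G(y', r')} v = \inf_{G(y, r)} u \leq 1$.

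The crux, and main technical obstacle, is to verify that the smallness hypothesis transforms correctly, namely that the choice $\e_0 = \tilde{\e}\, C_m / (8 C_M)$ forces
\[
\diam{G(y', 2r')}\, \|x_1 F\|_{L^2(G(y', 2r'))} < \tilde{\e}.
\]
The change of variables $z = \delta_\lambda x$ has Jacobian $\lambda^3$ and gives $\|x_1 F\|_{L^2(G(y', 2r'))} = \lambda^{3/2} \|z_1 f\|_{L^2(G(y, 2r))}$, so the factor $\lambda^{3/2}$ must be absorbed into the ratio of diameters. Using Remark \ref{rmk stima dal basso diam G} and $r' + |y'_1| = 1$ one gets $\diam{G(y', 2r')} \leq 4 C_M r'$, while $\diam{G(y, 2r)} \geq 2 C_m r \max\{1, \lambda\}$. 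A short case split on whether $\lambda \geq 1$ or $\lambda \leq 1$ then yields $\lambda^{3/2}\, \diam{G(y', 2r')}/\diam{G(y, 2r)} \leq 2 C_M / C_m$ uniformly in $\lambda$; combined with the hypothesis this delivers the required bound with the factor $1/8$ in $\e_0$ leaving ample slack.

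Finally, I apply Theorem \ref{thm rough critical density} to $v$. Since $r' + |y'_1| = 1$, the correction factor $\max\{r'+|y'_1|,\, 1/(r'+|y'_1|)\}^{-1}$ equals $1$, and the conclusion reads $|\{v \leq M\} \cap G(y', 3r'/2)| \geq \nu\, |G(y', 3r'/2)|$. Pulling back through $\delta_\lambda$ (both sides acquire the same Jacobian factor $\lambda^3$) and undoing the initial $x_2$-translation produces the thesis with $\e := \nu$.
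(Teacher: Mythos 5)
Your proposal is correct, and it streamlines the paper's argument in a worthwhile way. The paper follows \cite[Theorem 5.2]{AM} and proceeds in four steps: it first treats the slab $r=1$, $|y_1|\le 1$, then rescales by $r$ when $|y_1|\le r$ via $T(x)=(rx_1,y_2+r^2x_2)$, then handles $|y_1|=1$, and finally rescales by $|y_1|$ when $|y_1|>r$; in each reduced case the correction factor $\max\{r+|y_1|,\,(r+|y_1|)^{-1}\}$ of Theorem \ref{thm rough critical density} is bounded by $2$, giving $\e=\nu/2$. You instead translate in $x_2$ and then use the single dilation parameter $\lambda=r+|y_1|$, which normalizes the configuration so that $r'+|y_1'|=1$ \emph{exactly}, collapsing the correction factor to $1$ and eliminating the case split on the relative size of $|y_1|$ and $r$. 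The only remaining work is the uniform bound $\lambda^{3/2}\,\diam{G(y',2r')}/\diam{G(y,2r)}\le 2C_M/C_m$, which you derive correctly from Remark \ref{rmk stima dal basso diam G} and the elementary fact that $\lambda^{1/2}/\max\{1,\lambda\}\le 1$. Your verification of the transformation of the smallness hypothesis (Jacobian $\lambda^3$, $F=\lambda^4 f\circ\delta_\lambda$, pullback of $G$-sets via $G(\delta_\lambda y,\lambda s)=\delta_\lambda G(y,s)$) is sound, and your choice $\e_0=\tilde\e\,C_m/(8C_M)$ actually gives the bound $\tilde\e/4<\tilde\e$, leaving slack as you say. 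The net gain is a shorter proof with the marginally better constant $\e=\nu$ instead of $\nu/2$; the paper's four-step structure has the pedagogical advantage of mirroring the homogeneous case of \cite{AM} step by step, but your uniform normalization is tidier.
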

\begin{proof}
The proof is organized in four steps, at each step we prove the critical density property for a larger family of sets $G(y,r)$.
\begin{itemize}
\item[STEP I] Fix $r=1$, $y_1\in [-1,1]$ and $y_2=0$. Applying Theorem \ref{thm rough critical density}, we find that \eqref{tesi stima dal basso} holds true with $\max\{(r+|y_1|)^{1/2},(r+|y_1|)^{-1/2}\}$ replaced by $\sqrt{2}$ i.e.
$$|\{u\leq M\}\cap G(y,3/2 )|\geq \frac{\nu}{2} |G(y,3/2)|.$$
\item[STEP II]Fix $r\geq 0$, $|y_1|\leq r$ and $y_2\in \mathbb{R}$. Keeping in mind \eqref{dilatations} we introduce the change of variables
\begin{equation}\label{def T(x)}
T(x)=T(x_1,x_2):=(rx_1,y_2+r^2x_2),
\end{equation}
obtaining
\begin{align*}
X\tilde{u}(x)&=rXu(T(x)),\\
Y\tilde{u}(x)&=rYu(T(x)),\\
YX\tilde{u}(x)&=r^2YXu(T(x))
\end{align*}
where $\tilde{u}(x):=u(T(x))$. Moreover $T(x)\in G(y,r)$ if and only if $x\in G((y_1/r,0),1)$. The new operator $\tilde{L}:=\tilde{a_{11}}X^2+2\tilde{a}_{12}YX+\tilde{a}_{22}Y^2$, with $\tilde{a}_{i,j}(x):=a_{i,j}(T(x))$, $i\leq j\in\{1,2\}$, is of the type \eqref{def L} with ellipticity constants $\Lambda>\lambda$ and applied to $\tilde{u}$ gives
$$\tilde{L}\tilde{u}=r^2Lu(T(x))\leq r^2(T(x))_1^2f(T(x))=r^4x_1^2f(T(x))=x_1^2\tilde{f}(x)$$
with $\tilde{f}(x):=r^4f(T(x))$. We claim that $\tilde{u}$ satisfies the hypotheses of Theorem \ref{thm rough critical density} on $G((y_1/r,0),2)$. The only not obviously satisfied requirement  is 
\begin{equation}\label{req 1}
\diam{G((y_1/r,0),2)}\|x_1\tilde{f}(x)\|_{L^2(G((y_1/r,0),2))}<\tilde{\e}.
\end{equation}
Recalling \eqref{diam G} and changing coordinates ($x_1=\xi_1/r$, $x_2=(\xi_2-y_2)/r^2$) we have
\begin{equation}\label{stima 1 step 2}
\begin{split}
\diam{G((y_1/r,0),2)}\|x_1\tilde{f}(x)\|_{L^2(G((y_1/r,0),2))}&\leq 4C_M\max\Bigl\{1,1+\frac{|y_1|}{r}\Bigr\}\Bigl(\int_{G((y_1/r,0),2)}(x_1\tilde{f}(x))^2\dd{x}\Bigr)^{1/2}\\
&\leq 4C_M\Bigl(1+\frac{|y_1|}{r}\Bigr)\Bigl(\int_{G(y,2r)}(r^3\xi_1 f(\xi))^2\frac{1}{r^3}\dd{\xi}\Bigr)^{1/2}\\
&\leq 4C_M\Bigl(1+\frac{|y_1|}{r}\Bigr)r^{3/2}\|\xi_1 f(\xi)\|_{L^2(G(y,2r))}\\
&\leq 4C_M(r+|y_1|)r^{1/2}\|\xi_1 f(\xi)\|_{L^2(G(y,2r))}\\
\end{split}
\end{equation}
on the other hand by hypothesis and \eqref{diam G}
\begin{align*}
\tilde{\e}\frac{C_m}{8C_M}&>\diam{G(y,2r)}\|\xi_1 f(\xi)\|_{L^2(G(y,2r))}\\
&\geq C_m \max\{r,r(r+|y_1|)\}\|\xi_1 f(\xi)\|_{L^2(G(y,2r))}
\end{align*}
so that 
\begin{equation}\label{stima 2 step 2}
\tilde{\e}\geq 8C_M r\max\{1,r+|y_1|\}\|\xi_1 f(\xi)\|_{L^2(G(y,2r))}.
\end{equation}
Now, if $r\geq 1$, it is straightforward  to concatenate inequalities \eqref{stima 1 step 2} and \eqref{stima 2 step 2} to obtain \eqref{req 1}. Otherwise if $r\leq 1$ and $|y_1|\leq r$, again combining \eqref{stima 1 step 2} and \eqref{stima 2 step 2} we find
\begin{align*}
\diam{G((y_1/r,0),2)}\|x_1\tilde{f}(x)\|_{L^2(G((y_1/r,0),2))}
&\leq 4C_M(r+|y_1|)r^{1/2}\|\xi_1 f(\xi)\|_{L^2(G(y,2r))}\\
&\leq 8C_Mr^{3/2}\|\xi_1 f(\xi)\|_{L^2(G(y,2r))}\\
&\leq 8C_M r\max\{1,r+|y_1|\}\|\xi_1 f(\xi)\|_{L^2(G(y,2r))}\\&\leq \tilde{\e}.
\end{align*}
Hence $\tilde{u}$ satisfies hypotheses of Theorem \ref{thm rough critical density} with $y_1/r\in [-1,1]$, so that, by STEP I we get 
\begin{align*}
|\{\tilde{u}\leq M\}\cap G((y_1/r,0),3/2)|\geq \frac{\nu}{2}|G((y_1/r,0),3/2)|
\end{align*}
and by Theorem \ref{thm structure Box G} we conclude
\begin{align*}
|\{u\leq M\}\cap G(y,3r/2)|&=|T\bigr(\{\tilde{u}\leq M\}\cap G((y_1/r,0),3/2)\bigl)|\\
&=r^3|\{\tilde{u}\leq M\}\cap G((y_1/r,0),3/2) |\\
&\geq r^3  \frac{\nu}{2}|G((y_1/r,0),3/2)|\\
&=  \frac{\nu}{2}|G(y,3r/2)|.
\end{align*}
\item[STEP III] Fix $r>0$, $|y_1|=1$ and $y_2=0$. If $r\geq 1$ we apply STEP II, otherwise, in case $0<r<1$ we apply Theorem \ref{thm rough critical density} and take into account that $\max\biggl\{r+|y_1|,\frac{1}{r+|y_1|}\biggr\}<2$, so that
\begin{equation*}
|\{u\leq M\}\cap G((y_1,0),3/2r)|> \frac{\nu}{2}|G((y_1,0),3/2r)|.
\end{equation*}
\item[STEP IV] Fix $r>0$, $y_1,y_2\in \mathbb{R}$. If $|y_1|\leq r$ we use STEP II. If $|y_1|>r$ apply the change of variable defined in the second step with $r$ replaced by $|y_1|$ in \eqref{def T(x)}. Again we want to make use of Theorem \ref{thm rough critical density} and again the only hypothesis we must check is
\begin{equation}\label{reqirement step 4} 
\diam{G^*}\|x_1\tilde{f}(x)\|_{L^2(G^*)}<\tilde{\e},
\end{equation}
where $\tilde{f}(x):=|y_1|^4f(T(x))$ and $G^*=G((y_1/|y_1|,0),2r/|y_1|)$. \\
Recalling \eqref{diam G} and changing coordinates ($x_1=\xi_1/|y_1|$, $x_2=(\xi_2-y_2)/|y_1|^2$) we have
\begin{equation}\label{stima 1 step 4}
\begin{split}
\diam{G^*}\|x_1\tilde{f}(x)\|_{L^2(G^*)}&\leq 4C_M\max\Biggl\{\frac{r}{|y_1|},\frac{r}{|y_1|}\biggl(1+\frac{r}{|y_1|}\biggr)\Biggr\}\biggl(\int_{G^*}(x_1\tilde{f}(x))^2\dd{x}\biggr)^{1/2}\\
&\leq \frac{4C_Mr}{|y_1|^{1/2}}\max\{|y_1|,|y_1|+r\}\|\xi_1 f(\xi)\|_{L^2(G(y,2r))}\\
&\leq \frac{4C_Mr}{|y_1|^{1/2}}(|y_1|+r)\|\xi_1 f(\xi)\|_{L^2(G(y,2r))}\\
\end{split}
\end{equation}
If $|y_1|\geq 1$, it is straightforward to concatenate \eqref{stima 1 step 4} and \eqref{stima 2 step 2} obtaining 
\eqref{reqirement step 4}. Otherwise if $0<r<|y_1|< 1$, we estimate the right hand side of \eqref{stima 1 step 4} as follows
\begin{equation}\label{stima 1 step 4 improved}
\begin{split}
\diam{G^*}\|x_1\tilde{f}(x)\|_{L^2(G^*)}&\leq \frac{4C_M}{|y_1|^{1/2}}r(|y_1|+r)\|\xi_1 f(\xi)\|_{L^2(G(y,2r))}\\
&\leq \frac{8C_M}{|y_1|^{1/2}}r|y_1|\|\xi_1 f(\xi)\|_{L^2(G(y,2r))}\\
\end{split}
\end{equation}
and then concatenate \eqref{stima 1 step 4 improved} with \eqref{stima 2 step 2} and get the desired estimate.\\
Hence $\tilde{u}$ satisfies hypotheses of Theorem \ref{thm rough critical density} and using the third step we find 
\begin{align*}
|\{u\leq M\}\cap G(y,3r/2)|&=|T\bigl(\{\tilde{u}\leq M\}\cap G((y_1/|y_1|,0),3r/(2|y_1|))\bigr)|\\
&=|y_1|^3|\{\tilde{u}\leq M\}\cap G((y_1/|y_1|,0),3r/(2|y_1|))|\\
&\geq \frac{\nu}{2} |y_1|^3|G((y_1/|y_1|,0),3r/(2|y_1|))|\\
&\geq \frac{\nu}{2}|G(y,3r/2)|.
\end{align*}
\end{itemize}
\end{proof}

\begin{thm}\label{thm negaz CD in B}
Define $\e_0$ as in Theorem \ref{thm negaz CD in G}. Let $u\in C^2(B(y,2R))\cap C(\ol{B(y,2R)})$ with $y\in\mathbb{R}^2$, $R>0$ be a non negative solution to $Lu\leq x_1^2f$ in $B(y,2R)$ satisfying
\begin{equation*}
\inf_{B(y,R/2C^2)}u\leq 1, \quad\text{and}\quad \diam{B(y,2R)}\|x_1f\|_{L^2(B(y,2R))} <\e_0.
\end{equation*}
Then there exist structural constants $0<\nu<1$, depending on $\e_0$ and $M>1$ such that 
\begin{equation*}
|\{u\leq M\}\cap B(y,R)|\geq \nu|B(y,R)|.
\end{equation*}
Here $\nu=\e C^{-4}$ with $\e$ defined as in statement of Theorem \ref{thm negaz CD in G} and $C=C_BC_G$ with $C_G$ and $C_B$ the constants in Theorem \ref{thm structure Box G} and \ref{thm structure Box B} respectively.
\end{thm}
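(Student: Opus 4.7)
The plan is to reduce the statement to the already proved critical density on sublevel sets (Theorem \ref{thm negaz CD in G}), using the box sandwiches from Structure Theorems \ref{thm structure Box G} and \ref{thm structure Box B} to transfer information between $B$-balls and $G$-sets; the constant $C=C_BC_G$ in the hypotheses is precisely what measures how tightly these sandwiches nest.

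The first step is to pick an intermediate radius $r$ of order $R/C$, concretely $r:=2R/(3C)$, and verify three chained inclusions: (a) $G(y,2r)\subset\text{Box}(y,2C_Gr)=\text{Box}(y,2R/C_B)\subset B(y,2R)$; (b) $B(y,R/(2C^2))\subset\text{Box}(y,C_BR/(2C^2))\subset\text{Box}(y,r/C_G)\subset G(y,r)$, which holds because $R/(2C_BC_G^2)\leq 2R/(3C_BC_G^2)$; and (c) $G(y,3r/2)\subset\text{Box}(y,3C_Gr/2)=\text{Box}(y,R/C_B)\subset B(y,R)$. Each is a direct application of the box sandwich with elementary arithmetic on the radii. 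By (b) the inf-bound propagates to $\inf_{G(y,r)}u\leq1$, and by (a), together with monotonicity of the Euclidean diameter and of the $L^2$-norm under set inclusion, the smallness hypothesis $\diam{G(y,2r)}\|x_1f\|_{L^2(G(y,2r))}<\e_0$ is inherited from the corresponding bound on $B(y,2R)$.

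Theorem \ref{thm negaz CD in G} applied on $G(y,2r)$ then yields structural constants $0<\e<1$ and $M>1$ with $|\{u\leq M\}\cap G(y,3r/2)|\geq\e\,|G(y,3r/2)|$, and inclusion (c) immediately upgrades this to $|\{u\leq M\}\cap B(y,R)|\geq\e\,|G(y,3r/2)|$. The theorem then follows once one establishes a structural lower bound of the form $|G(y,3r/2)|\geq c|B(y,R)|$.

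The main obstacle lies in this last measure comparison: since $|\text{Box}(y,\rho)|=4\rho^2(\rho+|y_1|)$ depends on the base point through $|y_1|$, one cannot just divide radii. The key observation is the elementary inequality $(\rho_1+t)/(\rho_2+t)\geq\rho_1/\rho_2$ for $0<\rho_1\leq\rho_2$ and $t\geq 0$: applied with $\rho_1=3r/(2C_G)$, $\rho_2=C_BR$, $t=|y_1|$, it gives $|\text{Box}(y,\rho_1)|/|\text{Box}(y,\rho_2)|\geq(\rho_1/\rho_2)^3$, uniformly in the base point. Combining with the sandwich $|\text{Box}(y,\rho_1)|\leq|G(y,3r/2)|$ and $|B(y,R)|\leq|\text{Box}(y,\rho_2)|$ gives $|G(y,3r/2)|\geq C^{-k}|B(y,R)|$ for some fixed integer $k$, and absorbing the resulting numerical factors into $\nu$ produces the stated bound $\nu=\e C^{-4}$.
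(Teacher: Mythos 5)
Your argument follows the paper's route exactly: set $r$ of order $R/C$, use the box sandwiches of Theorems \ref{thm structure Box G} and \ref{thm structure Box B} to transfer the inf-bound and the smallness of $\|x_1 f\|$ from $B$-balls to $G$-sets, invoke Theorem \ref{thm negaz CD in G}, and then compare $|G(y,3r/2)|$ with $|B(y,R)|$ via the formula $|\text{Box}(y,\rho)|=4\rho^2(\rho+|y_1|)$. The one place to be careful is the final measure comparison: your clean monotonicity argument $(\rho_1+t)/(\rho_2+t)\geq \rho_1/\rho_2$ actually produces $|G(y,3r/2)|\geq C^{-6}|B(y,R)|$ and hence $\nu$ of order $\e C^{-6}$ rather than the $\e C^{-4}$ stated in the theorem; since all that matters downstream is that $\nu$ is a positive structural constant, this is immaterial (and the paper's own displayed arithmetic for this comparison contains factor slips of the same kind), but the closing remark that this "produces the stated bound $\nu=\e C^{-4}$" overstates what your computation gives.
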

\begin{proof}
First of all we define $r:=2R/3C$, since $B(y,R/2C^2)\subset G(y,3r/4)\subset G(y,3r/2)$ and $G(y,4r/3)\subset B(y,2R)$ imply respectively
\begin{equation*}
\inf_{G(y,3r/2)}u\leq \inf_{B(y,R/2C^2)}u\leq 1\quad\text{and}\quad \diam{G(y,4r/3)}\|x_1f\|{L^2(G(y,4r/3))}<\e_0
\end{equation*}
by Theorem \ref{thm negaz CD in G} there exist structural constants $0<\e<1$ and $M>1$ such that 
\begin{equation*}
|\{u\leq M\}\cap G(y,r )|\geq \e|G(y,r)|.
\end{equation*}
With the aid of Theorem \ref{thm structure Box G} and \ref{thm structure Box B} we estimate from below the right hand side by
\begin{align*}
\e|G(y,r)|&\geq \e C_G^{-1}r\bigr(|y_1|+C_G^{-1}r\bigl)\\
&= \e(C_G C_B)^{-2}C_B R\bigr(|y_1|+(C_G C_B)^{-2}C_B R\bigl)\\
&\geq \e (C_G C_B)^{-4}C_B R\bigr(|y_1|+C_B R\bigl)\\
&\geq \e C^{-4}|\text{Box}(y,C_B R)|\\
&\geq \e C^{-4}|B(y,R)|.
\end{align*}
concluding the proof.
\end{proof}

Provided that we invert the relation of dependence between $\nu$ and $\e_0$, the negation of Theorem above and the double ball property give
\begin{thm}\label{thm CD in B}
There exist structural constants $\eta_{CD}, M>1$ and $0<\nu, c, \e_{CD}<1$ such that if, $u\in C^2(B(y,\eta_{CD} R))\cap C(\ol{B(y,\eta_{CD} R)})$ with $y\in\mathbb{R}^2$, $R>0$ is a non negative solution to $Lu\leq x_1^2f$ in $B(y,\eta_{CD} R)$ satisfying 
\begin{equation*}
|\{u> M\}\cap B(y,R)|>(1-\nu)|B(y,R)|
\end{equation*}
then 
\begin{equation*}
\inf_{B(y,R)}u> c \quad\text{or }\quad \diam{B(y,\eta_{CD} R)}\|x_1f\|_{L^2(B(y,\eta_{CD} R))} \geq\e_{CD}.
\end{equation*}
More precisely, $\eta_{CD}=2\eta_{DB}$, $\e_{CD}=\min\{\gamma^p\e_{DB},\e_0\}$, $c=\gamma^p$ where $\gamma$, $\e_{DB}$, $\eta_{DB}$ are the constants defined in Theorem \ref{thm DB in B}, $M$, $\nu$, $\e_0$ are as in Theorem \ref{thm negaz CD in B}, and  $p\in\mathbb{N}$ is chosen so that $2^{p-1}>(C_GC_B)^2>2^{p-2}$  with $C_G$ and $C_B$ the constants in Theorems \ref{thm structure Box G} and \ref{thm structure Box B} respectively. 
\end{thm}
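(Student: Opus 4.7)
The plan is to deduce Theorem \ref{thm CD in B} by combining the contrapositive of Theorem \ref{thm negaz CD in B} with an iterated application of the double ball property in $B$-balls, Theorem \ref{thm DB in B}. This mirrors the paper's own hint that the result follows from ``the negation of the theorem above and the double ball property''.

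\textbf{Step 1: the negation step.} I would first suppose that the $L^2$-alternative in the conclusion fails, so that $\diam{B(y,\eta_{CD}R)}\|x_1f\|_{L^2(B(y,\eta_{CD}R))}<\e_{CD}=\min\{\gamma^p\e_{DB},\e_0\}$. Because $\eta_{CD}=2\eta_{DB}>4$, the inclusion $B(y,2R)\subset B(y,\eta_{CD}R)$ holds, and the same quantity taken on $B(y,2R)$ stays below $\e_0$. The measure hypothesis $|\{u>M\}\cap B(y,R)|>(1-\nu)|B(y,R)|$ is equivalent to $|\{u\leq M\}\cap B(y,R)|<\nu|B(y,R)|$, so Theorem \ref{thm negaz CD in B} rules out the possibility $\inf_{B(y,R/(2C^2))}u\leq 1$, and we obtain
\[
\inf_{B(y,R/(2C^2))}u>1,\qquad C=C_GC_B.
\]

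\textbf{Step 2: iterated doubling.} Set $r_0:=R/(2C^2)$. I claim by induction on $k$ that $\inf_{B(y,2^kr_0)}u\geq\gamma^k$ for $k=0,1,\dots,p$. The base case is Step 1. For the inductive step I exploit the linearity of $L$: the rescaled function $v_k:=u/\gamma^k$ is a non-negative solution of $Lv_k=x_1^2(f/\gamma^k)$ with $\inf_{B(y,2^kr_0)}v_k\geq 1$. To apply Theorem \ref{thm DB in B} to $v_k$ at radius $2^kr_0$, I need two things: the ambient containment $B(y,\eta_{DB}2^kr_0)\subset B(y,\eta_{CD}R)\subset\Omega$, and the smallness of the $L^2$ quantity of $x_1(f/\gamma^k)$ on that ball. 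The containment holds for every $0\leq k\leq p-1$ because the defining inequality $2^{p-1}<2C^2$ gives $\eta_{DB}\cdot 2^{p-1}/(2C^2)<\eta_{DB}=\eta_{CD}/2$; the smallness follows from monotonicity of the norm in the ball together with $\e_{CD}\leq\gamma^p\e_{DB}\leq\gamma^k\e_{DB}$. The double ball property then yields $\inf_{B(y,2^{k+1}r_0)}v_k\geq\gamma$, i.e.\ $\inf_{B(y,2^{k+1}r_0)}u\geq\gamma^{k+1}$, closing the induction.

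\textbf{Step 3: conclusion.} The other defining inequality $2^{p-1}>C^2$ gives $2^pr_0=2^pR/(2C^2)>R$, hence $B(y,R)\subset B(y,2^pr_0)$, and therefore $\inf_{B(y,R)}u\geq\gamma^p=c$, which is the first alternative in the conclusion.

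The only delicate point, and the one that fixes the precise numerical values stated in the theorem, is the bookkeeping in Step 2: at every one of the $p$ iterations one has to verify simultaneously the inclusion of the enlarged ball inside $B(y,\eta_{CD}R)$ and the corresponding smallness of the $L^2$ quantity. This is exactly what forces the two-sided bracketing $2^{p-2}<(C_GC_B)^2<2^{p-1}$ in the definition of $p$, the choice $\eta_{CD}=2\eta_{DB}$ and the specific threshold $\e_{CD}=\min\{\gamma^p\e_{DB},\e_0\}$; I do not expect any genuinely hard step beyond this accounting.
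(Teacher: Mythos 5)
Your proof is correct and follows essentially the same two-step route the paper uses: take the contrapositive of Theorem \ref{thm negaz CD in B} to get $\inf_{B(y,R/(2(C_GC_B)^2))}u>1$, then iterate the double ball property of Theorem \ref{thm DB in B} exactly $p$ times to recover $\inf_{B(y,R)}u\geq\gamma^p$. Your Steps 2 and 3 are in fact more careful than the paper's (the paper merely asserts the containment and smallness without spelling out the $\gamma^{-k}$ rescaling or checking the bracketing $2^{p-2}<(C_GC_B)^2<2^{p-1}$ in both directions), but the argument and all choices of constants coincide.
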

\begin{proof}
The negation of Theorem \ref{thm negaz CD in B} says that if 
\begin{equation*}
|\{u\leq M\}\cap B(y,R)\cap B(y,2 R)|< \nu|B(y,R)|
\end{equation*}
i.e.
\begin{equation*}
|\{u> M\}\cap B(y,R)|>(1-\nu)|B(y,R)|
\end{equation*}
then
\begin{equation*}
\inf_{B(y,R/2(C_GC_B)^2)}u> 1 \quad\text{or }\quad \diam{B(y,2 R)}\|x_1f\|_{L^2(B(y,2 R))} \geq\e_0.
\end{equation*}
If the second inequality holds or $\diam{B(y,\eta_{CD} R)}\|x_1f\|_{L^2(B(y,\eta{CD} R))} \geq\e_{CD}$ there is nothing to prove. Otherwise since by the definition of $p$, $2^p/2(C_GC_B)^2<\eta_{CB}$, for every $1\leq k\leq p$ we have
\begin{equation*}
\diam{B(y,R2^k/2(C_GC_B)^2)}\|x_1f\|_{L^2(B(y,R2^k/2(C_GC_B)^2))}\leq\diam{B(y,\eta{CD} R)}\|x_1f\|_{L^2(B(y,\eta{CD} R))} \leq\e_{CD}
\end{equation*}
we can repeatedly apply the double ball property $p$ times in $B(y,R/2(C_GC_B)^2)$ obtaining
\begin{equation*}
\inf_{B(y,R)}u\geq \inf_{B(y,2^pR/2(C_GC_B)^2)}u\geq \gamma^p.
\end{equation*}
\end{proof}

\subsection{Harnack inequality}
We need few more remarks to straightforwardly apply  the theory developed in Section \ref{sect abstract H}.
First of all we notice that  all structural constants in Theorem \ref{thm DB in B} and \ref{thm CD in B} are independent of the right hand side $f.$
This and the ring condition shown in \cite[Theorem 3.4]{AM} allow us to apply Theorems \ref{teo: power decay} and \ref{Harnack} to the family of function $\mathbb{K}_{\Omega,f}$ and get the following scale invariant Harnack inequality for the quasi metric balls $B$.

\begin{thm}
There exist structural constants $C, \eta>1$ such that if $u\in C^2(\Omega)\cap C(\ol{\Omega})$ is a non negative solution of $Lu=x_1^2f$ in $\Omega$ then 
\begin{equation*}
\sup_{B(y,r)}u\leq C\left(\inf_{B(y,r)}u+\diam{B(y,\eta r)}\|x_1f\|_{L^2(B(y,\eta r))}\right)
\end{equation*}
for every $B(y,\eta r)\subset\Omega$.
\end{thm}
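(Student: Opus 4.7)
The plan is to invoke the abstract Harnack inequality Theorem~\ref{Harnack} with the quasi metric space $(\Omega, \tilde d, \mathscr{L})$ and the functional set $\mathbb{K}_{\Omega,f}$ introduced above. It therefore suffices to verify that $\mathbb{K}_{\Omega,f}$ satisfies the power decay property under hypothesis (A) of Theorem~\ref{teo: power decay}; the rest will be geometric bookkeeping.

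First I would collect the structural prerequisites already available for $(\Omega, \tilde d, \mathscr L)$: the quasi distance $\tilde d$ is H\"older continuous, Lebesgue measure is doubling, and the ring condition with modulus $\omega(\varepsilon) = o(\log^{-2}(1/\varepsilon))$ required by (A2) is all established in \cite[Theorem~3.4]{AM}. Strict doubling $\mu(B_r(x)) \leq \delta \mu(B_{2r}(x))$ with $\delta < 1$ follows from the box comparison in Theorem~\ref{thm structure Box B}. The functional $\mathcal{S}_\Omega(A, f) = \diam{A}\,\|x_1 f\|_{L^2(A)}$ is order preserving in $A$ and positively homogeneous in $f$. Finally, the two closure conditions of Definition~\ref{def KOmega} hold for $\mathbb{K}_{\Omega,f}$ because $L$ is linear and zero-order free: if $Lu = x_1^2 f$ and $u \geq 0$, then $L(\lambda u) = x_1^2(\lambda f)$ for $\lambda \geq 0$ and $L(\tau - \lambda u) = -\lambda x_1^2 f$ whenever $\tau - \lambda u \geq 0$.

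Next I would invoke the PDE content built up in this section: Theorem~\ref{thm DB in B} gives the double ball property $DB(\gamma, \varepsilon_{DB}, \eta_{DB})$ and Theorem~\ref{thm CD in B} the critical density property $CD(\nu, c, \varepsilon_{CD}, \eta_{CD})$, both with structural constants independent of $f$. Using Remark~\ref{rmk cd db stesso eps } to equalise the radii and threshold constants, $\mathbb{K}_{\Omega,f}$ satisfies both properties simultaneously with the same $\eta$ and $\varepsilon$, and this holds for every admissible $f\in\mathcal{L}(\Omega)$. Hypothesis (A1) of Theorem~\ref{teo: power decay} is then met.

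With (A1) and (A2) in hand, Theorem~\ref{teo: power decay} yields the power decay property $PD(M, \gamma, \varepsilon_P, \eta_P)$ for $\mathbb{K}_{\Omega,f}$ with purely structural constants, and Theorem~\ref{Harnack} produces the claimed Harnack inequality with $\eta = 2K(2K\eta_P + 1)$. The only conceptual obstacle is to notice that the whole abstract framework has been rigged precisely so that the $f$-independence of the constants in the double ball and critical density properties---which is the content of the remark preceding this subsection---lets the axioms trigger verbatim. All the substantive analytic work, namely the weighted ABP principle Theorem~\ref{thm ABP}, the construction of the barriers $\sigma^\alpha$ and $\tilde\phi$, and the four-step dilation-translation argument upgrading the critical density from the $x_1$-axis to arbitrary sublevel sets $G(y,r)$ and then to $\tilde d$-balls, has already been discharged in the preceding subsections, so the final statement reduces to a direct appeal to the abstract machinery of Section~\ref{sect abstract H}.
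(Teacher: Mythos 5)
Your proposal is correct and follows essentially the same route as the paper: notice that the double ball constants in Theorem~\ref{thm DB in B} and the critical density constants in Theorem~\ref{thm CD in B} are structural (in particular $f$-independent), check the geometric hypotheses of Theorem~\ref{teo: power decay}(A) via \cite[Theorem~3.4]{AM} and the structure theorems, and then apply Theorems~\ref{teo: power decay} and~\ref{Harnack} to $\mathbb{K}_{\Omega,f}$. The paper compresses this verification into a short remark preceding the theorem; you simply spell out the same bookkeeping in more detail.
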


Harnack inequality for Carnot–Carathéodory metric ball $B_{CC}$ follows easily from the theorem above and the two structure Theorems \ref{thm structure 1} and \ref{thm structure Box B}.

\section{Application to X-elliptic operators}

Let $(X_1,\dots,X_m)$ be a family of locally Lipschitz vector fields with coefficients defined in $\mathbb{R}^N$.\\
We consider linear second order differential operators of the type

\begin{equation*}\label{X operator 1}
Lu=\sum_{i,j=1}^N\partial_i(b_{ij}\partial_ju)+\sum_{i=1}^N b_i\partial_iu
\end{equation*}

where $b_{ij},b_i$ are measurable functions and $B=\{b_{ij}\}_{i,j=1\dots N}$ is a symmetric matrix. Moreover we assume $L$ to be uniformly X-elliptic in a bounded open set $\Omega\subset \mathbb{R}^N$ in the sense of \cite{GL} i.e.

\begin{Def}
We say that the operator $L$ is uniformly X-elliptic in an open subset $\Omega\subset \mathbb{R}^N$ if there exist positive constants $\lambda\leq\Lambda$ and a non negative function $\gamma$ such that
\begin{gather*}
\lambda \sum_{j=1}^m\langle X_j(x),\xi\rangle^2\leq \langle B(x)\xi,\xi\rangle\leq \Lambda\sum_{j=1}^m \langle X_j(x),\xi\rangle^2\quad\text{ for every } x\in \Omega\text{ and }\xi\in\mathbb{R}^N\\
\langle b(x),\xi\rangle^2\leq \gamma^2(x)\sum_{j=1}^m\langle X_j(x),\xi\rangle^2 \quad\text{ for every } x\in \Omega\text{ and }\xi\in\mathbb{R}^N
\end{gather*}
here we use the notation $b=(b_1,\dots, b_N)$ and $\langle\cdot,\cdot\rangle$ is the standard inner product in $\mathbb{R}^N$.
\end{Def}  

Applying the abstract theory developed in Section \ref{sect abstract H} we prove the non homogeneous Harnack inequality for weak solution (in the sense specified in \cite{U} and Definition \ref{def weak solutions}) of the equation
\begin{equation}\label{non omog X elliptic}
Lu=g+\sum_{i=1}^N\partial_i f_i
\end{equation}
where $f=(f_1,\dots,f_N)$ is a measurable function such that there exists a non negative function $\gamma_0$ satisfying 
\begin{equation*}
\langle f(x),\xi\rangle^2\leq \gamma_0^2(x)\sum_{j=1}^m\langle X_j(x),\xi\rangle^2 \text{ for every }x\in\Omega, \xi\in\mathbb{R}^N.
\end{equation*}
We remark that the operator $L$ here considered is a simplified version of the one studied by Gutiérrez, Lanconelli and Uguzzoni in \cite{GL} and \cite{U} where they prove Harnack inequality using an adapted Moser's iteration  technique. Authors in  \cite{GL} require the dilation invariance of vector fields, while in \cite{U} this assumption is removed. We assume and list below the same hypotheses considered by Uguzzoni in \cite{U}.

\begin{itemize}

\item The Carnot–Carathéodory distance $d$ related to the family of vectors fields $X$ is well defined and continuous with respect to the Euclidean topology.

\item (Doubling condition) $(\mathbb{R}^N,d,\mu)$ is a metric space satisfying: for each compact set $K\subset Y$ there exist positive constants $C_D>1$ and $R_0>0$ such that
\begin{equation}\label{doubling X elliptic}
0<|B_{2r}(x)|\leq C_D|B_{r}(x)|
\end{equation}
for every $d$-ball $B_r(x)$ with $x\in K$ and $r\leq R_0$. Hereafter $|E|$ denotes the Lebesgue measure of a measurable set $E$.

\item (Poincaré inequality) For each compact set $K\subset \mathbb{R}^N$ there exists a positive constant $C$ such that
\begin{equation*}
\fint_{B_r}|u-u_r|\;dx\leq Cr\fint_{B_{2r}}|Xu|\;dx
\end{equation*}
 for every $C^1$ function $u$ and for every $d$-ball $B_r(x)$ with $x\in K$ and $r\leq R_0$. Here $\fint_{B_r}:=\frac{1}{|B_r|}\int_{B_r} u \;dx$.
 
 \item Set $Q:=\log_2 C_D$ and $p>\frac{Q}{2}$, then
 \begin{equation*}
 \gamma,\gamma_0\in L^{2p}(\Omega)\quad \text{and}\quad g\in L^p(\Omega).
 \end{equation*}
 Notice that, enlarging $C_D$ in \eqref{doubling X elliptic} if needed, we can always assume (and we do this) $Q>2$.
 
 \item (Sobolev inequality) Previous assumptions on the family of vector fields $X$ imply 
 \begin{equation*}
 \|u\|_{\frac{2Q}{Q-2}}\leq C(D)\|Xu\|_2\quad\text{for every }u\in C_0^1(D)
 \end{equation*}
 for every open set $D$ with sufficiently small diameter and closure contained in the interior of $K_0$. Here $K_0\subset\mathbb{R}^N$ is a fixed compact set whose interior contains the closure of $\Omega$. (For a deeper discussion on this result see \cite{FLW}, \cite{GN}, \cite{HK}).
 
 \item (Reverse Doubling) The $d$-diameter of $\Omega$ is small enough to have the reverse doubling property (see Propositions 2.9 and 2.10 in \cite{DGL} ) i.e. for every $B_{2r}(x)\subset \Omega$ there exists a constant $0<\delta<1$ independent of $r$ such that
\begin{equation*}
|B_r(x)|\leq\delta |B_{2r}(x)|.
\end{equation*}
 \end{itemize}
 
 We highlight that the $d-$diameter of the set $\Omega$ is required to be sufficiently small and the doubling and the Poincaré inequality are local conditions, so the  Harnack inequality is obtained for balls with small enough radius and the constants appearing depend on the compact set fixed. Very recently, in \cite{BB} Battaglia and Bonfiglioli obtained an invariant non homogeneus Harnack inequality for solutions of a class of sub-elliptic operators in divergence form under global doubling and Poincaré assumptions but no restrictions on the diameter of the set $\Omega$.

Hereafter $Xu$ denotes the $X$-gradient of $u$:
\begin{equation*}
Xu=(X_1u,\dots,X_m u),
\end{equation*}
$K_0$ is a fixed compact set containing $\ol{\Omega}$, $r_0=r_0(K_0)>0$ is a constant such that for every $r<r_0$ the Sobolev inequality holds in $B_{4r}\subset\Omega.$ If 
$D$ is a bounded domain supporting the Sobolev inequality we define $W^1_0(D,X)$ the closure of $C^1_0(D)$ with respect to the norm $\|u\|=\|Xu\|_{L^2}$ and $W^1(D,X)=\{u\in L^2(D): Xu\in L^2(D)\}$. Moreover from the Sobolev inequality follows $W_0^1(D,X)\subset L^{\frac{2Q}{Q-2}}(D)$.

We consider the bilinear form
\begin{equation*}\label{X operator 2}
\mathfrak{L}(u,v)=\int_D\langle B\nabla u,\nabla v\rangle-\langle b,\nabla u\rangle v \;dx\quad \text{for } u \in C^1(D),\; v\in C^1_0(D)
\end{equation*}
and the linear functional
\begin{equation*}
\mathfrak{F}(v)=\int_D\langle f,\nabla v\rangle -gv\;dx\quad \text{for } v\in C^1_0(D).
\end{equation*}

From the uniform $X$-ellipticity of $L$, the Sobolev inequality and assumptions on $f$ and $g$, Gutiérrez, Lanconelli in \cite{GL} and Uguzzoni in \cite{U} show respectively that $\mathfrak{L}$ can be extended continuously  to $W^1(D,X)\cap L^r(D)\times W_0^1(D,X)$, where $\frac{1}{r}=\frac{1}{2}+\frac{1}{2p}$ and $\mathfrak{F}$ can be extended continuously to $W_0^1(D,X)$. This gives meaning to the following notion of weak solution.

\begin{Def}(Weak solutions)\label{def weak solutions} We say that a function $u\in W^1_{loc}(\Omega, X)$ is a weak subsolution (resp. supersolution) to $Lu=g+\sum_{i=1}^N\partial_i f_i$ in $\Omega$ if, for every domain $D$, supporting the Sobolev inequality, with closure contained in $\Omega$, we have
\begin{equation*}
\mathfrak{L}(u,v)\leq (\text{resp.}\geq) \mathfrak{F}(v) \quad\text{for every } v\in W_0^1(D,X).
\end{equation*}
We say that $u$ is a solution if it is both a super and a subsolution.
\end{Def}

The main step toward the proof of Harnack inequality is to show that if we define 
\begin{equation*}
\mathcal{S}_{\Omega}\Biggl(B_{R}(x_0),\;g+\sum_{i=1}^N\partial_i f_i\Biggr):=R^{\delta}\|\gamma_0\|_{L^{2p}(\Omega)}+R^{2\delta}\|g\|_{L^p(\Omega)}
\end{equation*}
for $B_{R}(x_0)\subseteq\Omega$, $R<r_0$ with $\delta=1-\frac{Q}{2p}$, then the family
\begin{gather*}
\K_{\Omega, g+\sum_{i=1}^N\partial_i f_i}=\{u\in W^1_{loc}(\Omega, X): u\text{ is a non negative weak soluton to \eqref{non omog X elliptic}}\}
\end{gather*}
has the $\nu$ critical density property for every $0<\nu<1$. We prove it with the aid of three Lemmas.

\begin{lem}[Local boundedness]\label{Local boundness Lemma}
Let $u$ be a solution to \eqref{non omog X elliptic} in $\Omega$ and 
$\ol{u}=u+\sigma$, for all $\sigma>0,$
 then there exists a structural constant $c>0$ such that
\begin{equation*}
\sup_{B_R(x_0) }\ol{u}\leq c\biggl(\fint_{B_{2R}(x_0)} \ol{u}^2\biggr)^{\frac{1}{2}}
\end{equation*}
for every $\ol{B_{4R}(x_0)}\subset\Omega$, $R<r_0$. 

\end{lem}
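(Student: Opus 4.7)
The plan is Moser iteration adapted to the inhomogeneous equation. My key observation is that $L$ has no zero-order term, so it annihilates constants; hence $\ol{u}=u+\sigma$ is still a weak solution of $L\ol{u}=g+\sum\partial_i f_i$, and the strict positivity $\ol{u}\ge\sigma>0$ lets me convert the forcing data $g$ and $f$ into admissible ``lower-order coefficients'' multiplying powers of $\ol{u}$ via $|g|\,\ol{u}^{\beta}\le(|g|/\sigma)\,\ol{u}^{\beta+1}$ and $\gamma_0^2\,\ol{u}^{\beta-1}\le(\gamma_0^2/\sigma^2)\,\ol{u}^{\beta+1}$.

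First I would prove a Caccioppoli-type inequality. For $\beta\ge 1$, nested balls $B_R\subset B_r\subset B_{R'}\subset B_{2R}$, and a cutoff $\eta\in C^1_0(B_{R'})$ with $\eta\equiv 1$ on $B_r$ and $|X\eta|\lesssim (R'-r)^{-1}$, I plug the test function $v=\eta^2\ol{u}^{\beta}\in W_0^1(B_{R'},X)$ into $\mathfrak{L}(\ol{u},v)=\mathfrak{F}(v)$, expand $\nabla v$, and use the uniform $X$-ellipticity together with the pointwise bounds $|\langle b,\nabla\phi\rangle|\le\gamma|X\phi|$ and $|\langle f,\nabla\phi\rangle|\le\gamma_0|X\phi|$. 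Young's inequality absorbs the $\int\eta^2\ol{u}^{\beta-1}|X\ol{u}|^2$ terms appearing on the right, and combining with the two absorption tricks above one should arrive at
\begin{equation*}
\int\eta^2\ol{u}^{\beta-1}|X\ol{u}|^2\,dx\le C(\beta+1)^2\int\ol{u}^{\beta+1}\bigl(|X\eta|^2+\eta^2(\gamma^2+\gamma_0^2/\sigma^2+|g|/\sigma)\bigr)\,dx.
\end{equation*}

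Next, setting $w=\eta\,\ol{u}^{(\beta+1)/2}$ and using $|Xw|^2\lesssim\eta^2(\beta+1)^2\ol{u}^{\beta-1}|X\ol{u}|^2+|X\eta|^2\ol{u}^{\beta+1}$, I apply the Sobolev inequality $\|w\|_{L^{2\kappa}}\le C\|Xw\|_{L^2}$ with $\kappa=Q/(Q-2)>1$ and estimate the data contributions by H\"older. Since $\gamma,\gamma_0\in L^{2p}$ and $g\in L^p$ with $p>Q/2$, the dual exponent $p'=p/(p-1)$ satisfies $2p'<2\kappa$, so the interpolation $\|w\|_{L^{2p'}}\le\|w\|_{L^2}^{\theta}\|w\|_{L^{2\kappa}}^{1-\theta}$ followed by Young's inequality absorbs the $L^{2\kappa}$ piece back into the left hand side, producing a reverse H\"older estimate
\begin{equation*}
\|\ol{u}^{(\beta+1)/2}\|_{L^{2\kappa}(B_r)}\le\frac{C(\beta+1)}{R'-r}\|\ol{u}^{(\beta+1)/2}\|_{L^2(B_{R'})}.
\end{equation*}
Iterating with $\beta_k+1=2\kappa^k$ and $R_k=R(1+2^{-k})$, $k\ge 0$, the convergence of $\sum_k\kappa^{-k}$ and $\sum_k k\kappa^{-k}$ keeps the product of constants finite, and letting $k\to\infty$ together with the doubling estimate $|B_R|\asymp|B_{2R}|$ yields the stated $\sup_{B_R}\ol{u}\le c\bigl(\fint_{B_{2R}}\ol{u}^2\bigr)^{1/2}$.

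The main obstacle I anticipate is the clean treatment of the inhomogeneous data: $g$ and $f$ do not pair naturally with $\ol{u}^{\beta+1}$, and the exponent arithmetic of the iteration must close exactly. The positivity $\ol{u}\ge\sigma$ is precisely what turns $|g|$ and $|f|$ into genuine $L^p$, $L^{2p}$ ``potentials'' for $\ol{u}^{\beta+1}$, while the assumption $p>Q/2$ (equivalently the strict gap $2p'<2\kappa$) is what permits H\"older--Sobolev interpolation to absorb those contributions into a strict fraction of $\|w\|_{L^{2\kappa}}^2$ plus a controlled multiple of $\|w\|_{L^2}^2$, preserving the geometric-series structure of the iteration. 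The structural constant $c$ thus ends up depending on $\sigma$, on the data norms $\|\gamma\|_{L^{2p}}$, $\|\gamma_0\|_{L^{2p}}$, $\|g\|_{L^p}$, and on the ellipticity, doubling and Sobolev constants attached to $K_0$.
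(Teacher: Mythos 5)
Your outline — Moser iteration with test functions $v=\eta^2\ol{u}^{\beta}$, Caccioppoli, Sobolev, H\"older absorption of the data using $\ol{u}\geq\sigma$ and the gap $p>Q/2$, geometric iteration on balls $B_{R(1+2^{-k})}$ — is indeed the standard route, and it is the one taken in Uguzzoni \cite[p.175]{U}, to which the present paper delegates the proof without reproducing it. The correct ingredients are all there. However, there is a genuine gap in your conclusion: you end by asserting that ``the structural constant $c$ thus ends up depending on $\sigma$, on the data norms $\|\gamma_0\|_{L^{2p}}$, $\|g\|_{L^p}$\dots''. But the Lemma asserts that $c$ is a \emph{structural} constant, and by Definition \ref{def structural constant} a structural constant is by definition independent of the data $f$ (so of $\gamma_0$, $g$) and, in particular, cannot silently depend on the arbitrary parameter $\sigma$. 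If the constant you produce really does carry those dependencies, you have not proved the stated lemma.

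The missing observation is the one the paper uses elsewhere (see the proof of Lemma \ref{Estimates X logu Lemma}): the inhomogeneous data should enter the iteration only through the scale-invariant quantity
\begin{equation*}
a^*(R)=\sup_{\rho\leq 4R}\frac{\rho}{|B_\rho|^{1/2p}}\,\|a\|_{L^{2p}(B_\rho)},\qquad
a^2=\frac{1}{\lambda^2}\Bigl(\gamma^2+\frac{\gamma_0^2}{\sigma^2}\Bigr)+\frac{|g|}{\lambda\sigma},
\end{equation*}
rather than through raw norms of $\gamma_0$ and $g$. Once the Caccioppoli and reverse-H\"older steps are written so that only $a^*(R)$ appears, one verifies (this is the computation at \cite[p.177]{U}) that with $\sigma$ comparable to $\mathcal{S}_{\Omega}\bigl(B_R(x_0),g+\sum_i\partial_if_i\bigr)=R^{\delta}\|\gamma_0\|_{L^{2p}}+R^{2\delta}\|g\|_{L^p}$, $\delta=1-Q/2p$, the quantity $a^*(R)$ is bounded above by $C\bigl(\|\gamma\|_{2p}^2+1\bigr)^{1/2}$, which \emph{is} an allowed dependence. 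Without this normalization of $\sigma$ the constant blows up as $\sigma\to 0^+$, so the literal reading ``for all $\sigma>0$ with a $\sigma$-independent structural $c$'' cannot be obtained by your argument, and indeed the lemma is applied in the paper only with this specific $\sigma(R)$. You should either add this identification of $\sigma$ and the bound on $a^*(R)$ to close the argument, or flag explicitly that your constant degenerates as $\sigma\to 0^+$ and therefore the statement must be read with the canonical choice of $\sigma$.
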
 

Proof of this result is given in \cite[p.175]{U}.

We recall that by Definition \ref{def structural constant} a structural constant does not depend on $u\in \K_{\Omega, g+\sum_{i=1}^N\partial_i f_i}$ nor on the balls defined by the quasi distance. On the other hand it may depend on the ellipticity constants $\lambda,\; \Lambda$, the doubling constant $C_D$, the constant in the Poincaré inequality, the Lipschitz constant of vector fields $\{X_i\}_{1,\dots, m}$  and  $(\|\gamma\|^2_{2p}+1)^{1/2}$.

\begin{lem}[Fabes Lemma, \cite{DGL} Lemma 7.4]\label{Fabes Lemma} Let $v\in W^{1}_{loc}(\Omega,X)$, $B_R(x_0)\subset\Omega$ and assume there exists $0<\e\leq 1$ such that
\begin{equation*}
|\{x\in B_{R}(x_0):v(x)=0\}|\geq \e|B_R|
\end{equation*}
then
there exists a constant $C\geq 0$ depending on $\e$ such that
\begin{equation*}
\fint_{B_R(x_0)} |v|^2\;dx\leq CR^2\fint_{B_R(x_0)}|Xv|^2dx.
\end{equation*}
\end{lem}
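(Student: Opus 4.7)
\textbf{Proof plan for Lemma \ref{Fabes Lemma}.} The plan is to reduce the estimate to an $L^2$-Poincar\'e inequality via the elementary observation that, on the zero set of $v$, the deviation of $v$ from its average coincides with the absolute value of that average. Set $v_R := \fint_{B_R(x_0)} v\,dx$ and $E := \{x \in B_R(x_0) : v(x) = 0\}$, so that $|E| \geq \e |B_R(x_0)|$ by hypothesis.

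First I would estimate $|v_R|$ in terms of the oscillation of $v$: since $v \equiv 0$ on $E$,
\[
|v_R|^2 |E| \;=\; \int_E |v - v_R|^2\,dx \;\leq\; \int_{B_R(x_0)} |v - v_R|^2\,dx,
\]
which combined with the measure hypothesis yields $|v_R|^2 \leq \e^{-1} \fint_{B_R(x_0)} |v - v_R|^2\,dx$. A triangle inequality then gives
\[
\fint_{B_R(x_0)} |v|^2\,dx \;\leq\; 2\fint_{B_R(x_0)} |v - v_R|^2\,dx + 2|v_R|^2 \;\leq\; 2\bigl(1 + \e^{-1}\bigr) \fint_{B_R(x_0)} |v - v_R|^2\,dx,
\]
so everything reduces to a $(2,2)$-Poincar\'e inequality $\fint_{B_R(x_0)} |v-v_R|^2\,dx \leq CR^2 \fint_{B_R(x_0)} |Xv|^2\,dx$ (possibly on a slightly enlarged concentric ball, which is harmless since its measure is absorbed into $C$ by doubling). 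Once this is available, combining the two estimates concludes the proof with constant $2(1+\e^{-1})$ times the Poincar\'e constant.

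The main obstacle is precisely the upgrade from the $(1,1)$-Poincar\'e inequality assumed in the excerpt to a $(2,2)$-Poincar\'e. Everything else in the argument is elementary. In a doubling metric measure space supporting a weak $1$-Poincar\'e inequality, this self-improvement is a standard but non-trivial consequence of the machinery of Hajlasz--Koskela; alternatively, one may combine the $(1,1)$-Poincar\'e with the Sobolev inequality listed among the structural hypotheses via a truncation/chaining argument in the spirit of \cite{FLW}, \cite{GN}, \cite{HK}. This is the reason the statement is formulated for $v \in W^1_{\mathrm{loc}}(\Omega,X)$ and why the Sobolev framework of the section is essential: the elementary portion of the proof only needs the zero-set trick, but the quantitative Poincar\'e estimate requires the full metric Sobolev apparatus.
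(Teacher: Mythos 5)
Your overall strategy is correct and is exactly the standard route to this Fabes-type lemma; the paper does not supply its own proof but cites it from Di Fazio--Guti\'errez--Lanconelli, and your argument (zero-set trick to control the average $v_R$, triangle inequality, then a $(2,2)$-Poincar\'e inequality) is precisely their argument. The elementary computations $|v_R|^2|E|=\int_E|v-v_R|^2$ and the subsequent $\fint_{B_R}|v|^2\leq 2(1+\e^{-1})\fint_{B_R}|v-v_R|^2$ are fine, and you correctly identify the genuinely nontrivial ingredient as the upgrade from the assumed $(1,1)$-Poincar\'e to a $(2,2)$-Poincar\'e via the Hajlasz--Koskela / Franchi--Lu--Wheeden machinery.

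One point is wrong as stated and worth flagging: the parenthetical remark that a Poincar\'e inequality with an enlarged ball on the right-hand side is ``harmless since its measure is absorbed into $C$ by doubling.'' Doubling only controls the \emph{normalizing factor} $|B_{2R}|^{-1}$ versus $|B_R|^{-1}$; it says nothing about replacing the \emph{domain of integration} $B_{2R}$ by $B_R$, and in general $\fint_{B_{2R}}|Xv|^2$ cannot be bounded above by $C\fint_{B_R}|Xv|^2$. To obtain the lemma exactly as stated (same ball on both sides) you need the weak-to-strong self-improvement of the Poincar\'e inequality in doubling spaces --- a standard but separate fact, not a consequence of the doubling of measure alone. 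Alternatively, one may keep the enlarged ball in the conclusion of the lemma and propagate it through the application in Theorem~\ref{thm crit density X elliptic}, where it costs only a change of structural constants; but then the lemma one proves is not literally the one stated. Either fix is routine, but the justification you gave is not the right one.
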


\begin{lem}[Estimates for $ \|X \log \ol{u}\|_{L^2(B_R(x_0))}$]\label{Estimates X logu Lemma}
Let $u$ be a non negative weak subsolution to \eqref{non omog X elliptic} in $\Omega$, then there exists a structural constant $c>0$ such that
\begin{equation*}
\fint_{B_R(x_0)}|X\log\ol{u}|^2\;dx \leq \frac{c}{R^2}
\end{equation*}
for every $\ol{B_{4R}}(x_0)\subset\Omega$, $R<r_0$. Here $\ol{u}=u+\mathcal{S}_{\Omega}\Bigl(B_{R}(x_0),g+\sum_{i=1}^N\partial_i f_i\Bigr)$.
\end{lem}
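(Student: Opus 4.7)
The strategy is the classical Moser-type log estimate: test the weak formulation with $v=\phi^2/\ol u$, where $\phi$ is a cutoff supported in $B_{2R}(x_0)$ and identically $1$ on $B_R(x_0)$, with $|X\phi|\le c/R$. The point of adding $\mathcal{S}_\Omega$ to $u$ is precisely that it keeps $\ol u$ bounded below away from zero, so that $v\in W^1_0(B_{4R}(x_0),X)$ is an admissible test function, and moreover the specific powers of $R$ built into $\mathcal{S}_\Omega$ are tuned so that the right-hand-side data ultimately produce the correct $R^{-2}$ scaling.

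Substituting $v=\phi^2/\ol u$ into the weak inequality and computing $\nabla v=2\phi\nabla\phi/\ol u-\phi^2\nabla u/\ol u^2$, the plan is to isolate the quadratic term $\int\phi^2\langle B\nabla u,\nabla u\rangle/\ol u^2$, which by $X$-ellipticity dominates $\lambda\int\phi^2|X\log\ol u|^2$. The remaining five cross-terms, involving $\langle B\nabla u,\nabla\phi\rangle$, $\langle b,\nabla u\rangle$, $\langle f,\nabla\phi\rangle$, $\langle f,\nabla u\rangle$ and $g$, will be controlled using the pointwise bounds $|\langle B\nabla u,\nabla\phi\rangle|\le\Lambda|Xu||X\phi|$, $|\langle b,\nabla u\rangle|\le\gamma|Xu|$ and $|\langle f,\nabla w\rangle|\le\gamma_0|Xw|$, followed by Young's inequality to absorb the four terms containing a factor $\phi|Xu|/\ol u$ back into the left-hand side. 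This will yield the reduced estimate
\begin{equation*}
\int\phi^2|X\log\ol u|^2\,dx\le C\Bigl(\int|X\phi|^2\,dx+\int\gamma^2\phi^2\,dx+\int\frac{\gamma_0^2\phi^2}{\ol u^2}\,dx+\int\frac{|g|\phi^2}{\ol u}\,dx\Bigr).
\end{equation*}

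The last step is to bound each of these four integrals by $c|B_R|/R^2$. The first is immediate from $|X\phi|\le c/R$ and the doubling condition. For the second I will use H\"older with exponents $p$ and $p'$, the structural bound $\|\gamma\|_{L^{2p}(\Omega)}\le c$, and the smallness $R<r_0$ to absorb the remaining positive power of $R$. The crucial terms are the last two: here the definition of $\mathcal{S}_\Omega$ gives $\ol u\ge R^\delta\|\gamma_0\|_{L^{2p}(\Omega)}$ and $\ol u\ge R^{2\delta}\|g\|_{L^p(\Omega)}$, so H\"older yields
\begin{equation*}
\int\frac{\gamma_0^2\phi^2}{\ol u^2}\,dx\le\frac{\|\gamma_0\|_{L^{2p}(\Omega)}^{2}|B_{2R}|^{1-1/p}}{R^{2\delta}\|\gamma_0\|_{L^{2p}(\Omega)}^{2}}=\frac{|B_{2R}|^{1-1/p}}{R^{2\delta}},
\end{equation*}
which, since $|B_{2R}|^{-1/p}\lesssim R^{-Q/p}$ by doubling and $2\delta=2-Q/p$, is bounded by $c|B_R|/R^2$. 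The $g$-term is estimated analogously with exponents $p,p'$, using $\ol u\ge R^{2\delta}\|g\|_{L^p(\Omega)}$. Dividing through by $|B_R|$ will give the claim.

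The main obstacle is this careful bookkeeping: one must verify that the exponents $\delta=1-Q/(2p)$ and $2\delta$ appearing in the definition of $\mathcal{S}_\Omega$ exactly cancel the powers of $R$ arising from H\"older's inequality and from the doubling estimate for $|B_{2R}|$. This balance, valid precisely for $p>Q/2$, is the a priori reason for the specific form of $\mathcal{S}_\Omega$ and is what lets the abstract framework of Section~\ref{sect abstract H} cover the non-homogeneous $X$-elliptic equation. A minor but standard technicality is the admissibility of $v=\phi^2/\ol u$ in $W^1_0$ when $\mathcal{S}_\Omega(B_R,g+\sum_i\partial_if_i)=0$; this is handled by the routine approximation $\ol u\to\ol u+\sigma$ followed by letting $\sigma\downarrow 0$.
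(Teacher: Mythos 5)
Your proposal is correct and follows essentially the same route as the paper: the paper cites Uguzzoni \cite[pp.~176--177]{U} for the intermediate estimate $\int|\eta X\log\ol u|^2\le C\int F$ (which is exactly the Moser-type log estimate obtained by testing with $\eta^2/\ol u$ that you reconstruct), and then closes via H\"older and doubling just as you do. The only difference is presentational -- you re-derive the tested-inequality computation that the paper imports as a black box, and you track the powers $R^\delta$, $R^{2\delta}$ more explicitly; the balance of exponents and the use of the lower volume bound $|B_\rho|\gtrsim\rho^Q$ (a consequence of doubling on the compact $K_0$) are the same in both.
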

\begin{proof}
In \cite[p.176]{U} it is shown that
\begin{equation*}
\int_{B_{4R}(x_0)}|\eta X\log\ol{u}|^2\; dx\leq C\int_{B_{4R}(x_0)} F\; dx
\end{equation*} 
where $\eta\in C^1_0(B_{4R}(x_0))$ is a non negative function, $F=(1+6\Lambda^2/\lambda^2)|X\eta|^2+\eta^2a^2$, $a=\bigl(\frac{1}{\lambda^2}\bigl(\gamma^2+\bigl(\frac{\gamma_0}{\sigma}\bigr)^2\bigr)+\frac{|g|}{\lambda\sigma}\bigr)^{\frac{1}{2}}$ and $C$ may depend on $a^*(R)=\sup_{\rho\leq 4R}\frac{\rho}{|B_{\rho}|^{1/2p}}\|a\|_{L^{2p}(B_{\rho})}$. Starting from this estimate it suffices to choose $0\leq\eta\leq 1$, $\eta\equiv 1$ in $B_R(x_0)$,  $\eta\equiv 0$ in $\Omega\setminus B_{2R}(x_0)$, $|X\eta|\leq \frac{C}{R}$ and recall the doubling condition to obtain
\begin{align*}
\fint_{B_R(x_0)}|X\log\ol{u}|^2\;dx&\leq CC_D^2\fint_{B_{2R}(x_0)} F\;dx\\
&\leq C \fint_{B_{2R}(x_0)}\frac{1}{R^2}+a^2\;dx\\
&=\frac{C}{R^2}\biggl(R^2\fint_{B_{2R}(x_0)}a^2\;dx+1\biggr).
\end{align*}
Then, by using H\"older inequality, we estimate the right hand side of inequality above obtaining
\begin{align*}
\fint_{B_R(x_0)}|X\log\ol{u}|^2\;dx&\leq \frac{C}{R^2}\biggl(R^2\biggl(\fint_{B_{2R}(x_0)}a^{2p}\;dx\biggr)^{1/p}+1\biggr)\\
&\leq \frac{C}{R^2}\bigl((a^*(R))^2+1\bigr).
\end{align*}
We remark that in \cite[p.177]{U}  it is shown that $a^*(R)\leq C(\|\gamma\|^2_{2p}+1)^{1/2}$ so it is bounded from above by a structural constant.
\end{proof}

\begin{thm}\label{thm crit density X elliptic} Let $\eta>4$, $R<r_0$ and $u$ be a non negative supersolution to \eqref{non omog X elliptic} in $B_{\eta R}(x_0)\subset\Omega$ and  $0<\nu<1$, if $u$ satisfies 
\begin{equation*}
|\{x\in B_R(x_0):u(x)\geq 1\}|\geq\nu|B_R(x_0)|
\end{equation*}
then there exist two constant $0<c,\e<1$ depending on $\nu$ such that
\begin{equation*}
\inf_{B_{R/2}}u \geq c\quad\text{or}\quad \sigma(R)\geq\e.
\end{equation*}
Here $\sigma(R)=:\mathcal{S}_{\Omega}\Bigl(B_{x_0}(R),\;g+\sum_{i=1}^N\partial_i f_i\Bigr)$.
\end{thm}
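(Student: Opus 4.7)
The argument I propose is a standard Moser-type critical density proof for non-homogeneous divergence form equations. The dichotomy in the conclusion lets us assume $\sigma(R) < \varepsilon$ and try to prove $\inf_{B_{R/2}} u \geq c(\nu)$. Set $\bar u = u + \sigma(R)$; the additive shift $\sigma(R) = R^\delta \|\gamma_0\|_{2p} + R^{2\delta} \|g\|_p$ is tailor-made to absorb the non-homogeneous forcing $g + \sum_i \partial_i f_i$ when $\bar u$, $1/\bar u$, or $\bar u^{-\alpha}$ are used as Moser test functions, via H\"older in the exponents $p$ and $2p$.

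First, I would apply (the supersolution analogue of) Lemma \ref{Estimates X logu Lemma} on $B_R(x_0)$; its hypothesis $\overline{B_{4R}(x_0)} \subset \Omega$ is the reason for the requirement $\eta > 4$. This yields $\fint_{B_R}|X \log \bar u|^2\,dx \leq c R^{-2}$. Setting $w := (\log \bar u)^-$, we note that $\bar u \geq 1$ on $\{u \geq 1\} \cap B_R$, so $w = 0$ there; since this set has measure at least $\nu |B_R|$, Lemma \ref{Fabes Lemma} applies and gives $\fint_{B_R} w^2 \,dx \leq C(\nu) R^2 \fint_{B_R} |X\log\bar u|^2 \,dx \leq C(\nu)$, an $L^2$ mean bound on the negative part of $\log \bar u$ with constant depending only on $\nu$ and structural data.

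The final step is to promote this $L^2$ mean bound to a pointwise lower bound on $\bar u$ inside $B_{R/2}$. I would check that, for $\alpha > 0$ small, $\bar u^{-\alpha}$ is a weak subsolution of an X-elliptic equation of the same form as \eqref{non omog X elliptic}, with effective right-hand side whose $L^p$/$L^{2p}$ norms are controlled by structural constants (again thanks to the shift: $1/\bar u \leq 1/\sigma(R)$ absorbs the forcing). Lemma \ref{Local boundness Lemma} applied to $\bar u^{-\alpha}$ then gives
\[
\sup_{B_{R/2}} \bar u^{-\alpha} \leq C\left(\fint_{B_R} \bar u^{-2\alpha}\right)^{1/2},
\]
and the remaining integral is bounded using the $L^2$ control on $w$: the log estimate together with the Poincar\'e inequality forces $\log \bar u$ into $\mathrm{BMO}(B_R)$ with structural seminorm, while Cauchy--Schwarz on $w$ controls the mean of $\log \bar u$ from below, so John--Nirenberg delivers $\fint_{B_R} \bar u^{-2\alpha} \leq C(\nu)$ for small enough $\alpha$. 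Hence $\inf_{B_{R/2}} \bar u \geq c(\nu) > 0$, so $\inf_{B_{R/2}} u \geq c(\nu) - \sigma(R) \geq c(\nu) - \varepsilon$; taking $\varepsilon := c(\nu)/2$ finishes the proof.

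The main obstacle is this last step: verifying that $\bar u^{-\alpha}$ truly satisfies a subsolution inequality of the required form, with constants independent of $u$, $g$, $f_i$ and $R$, and then closing the John--Nirenberg / Moser iteration uniformly in $\nu$. Both tasks hinge on the careful choice of $\sigma(R)$, whose exponents $\delta = 1 - Q/(2p)$ are exactly the scaling dimensions needed to make the contributions of $g$ and $\gamma_0$ structural and dimensionless after absorbing factors of $R$ and $|B_R|^{1/p}$ from H\"older's inequality.
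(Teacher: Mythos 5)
Your proposal is correct in spirit and uses the same three ingredients as the paper (Lemma \ref{Local boundness Lemma}, Lemma \ref{Fabes Lemma}, and Lemma \ref{Estimates X logu Lemma}), and the first two steps coincide exactly: defining $\ol{u}=u+\sigma(R)$, setting $w=(\log\ol{u})^-$, using Fabes' lemma together with the logarithmic gradient estimate to get $\fint_{B_R}w^2\le C(\nu)$. But the final step is a genuinely different route. You go through the classical Moser weak-Harnack machinery: show $\ol{u}^{-\alpha}$ is a subsolution of a related equation, control $\fint_{B_R}\ol{u}^{-2\alpha}$ by John--Nirenberg plus the BMO estimate on $\log\ol{u}$ implied by Poincar\'e and Lemma \ref{Estimates X logu Lemma}, and then apply local boundedness to $\ol{u}^{-\alpha}$. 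The paper instead observes the shortcut that $w=\max\{-\log\ol{u},0\}$ is \emph{itself} a weak subsolution, satisfying $\mathfrak{L}(w,v)\le -\mathfrak{F}(v)$, so Lemma \ref{Local boundness Lemma} can be applied directly to $w$ to get $\sup_{B_{R/2}}w\le c\bigl(\fint_{B_R}w^2\bigr)^{1/2}\le c$ and hence $\inf_{B_{R/2}}\ol{u}\ge e^{-c}$. This bypasses John--Nirenberg and the powers $\ol{u}^{-\alpha}$ altogether, and in particular avoids the technical burden you correctly flag as ``the main obstacle'': checking uniformly-in-$\alpha$ that $\ol{u}^{-\alpha}$ satisfies a subsolution inequality with structural effective forcing. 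If you wish to close your argument along your own lines, that verification (including the dependence of the local boundedness constant on the effective right-hand side of the equation for $\ol{u}^{-\alpha}$, which involves factors of $\ol{u}^{-\alpha-1}\le\sigma(R)^{-\alpha-1}$) still needs to be carried out; otherwise you may simply replace the John--Nirenberg step by the observation that $w$ is already a subsolution and apply the sup-estimate to $w$ as the paper does.
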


\begin{proof}
Define $\ol{u}=u+\sigma(R)$ and $h(\ol{u}):=\max\{-\log \ol{u}, 0\}$. It is easy to see that $w:=h(\ol{u})$ satisfies $\mathfrak{L}(w,v)\leq -\mathfrak{F}(v)$. On one hand  
\begin{equation*}
|\{x\in B_r(x_0): w(x)=0\}|=|\{x\in B_R(x_0): \ol{u}(x)\geq 1\}|\geq |\{x\in B_R(x_0): u(x)\geq 1\}|\geq \nu |B_R(x_0)|,
\end{equation*}
so that, by Fabes Lemma we have
\begin{equation*}
\fint_{B_R(x_0)}|w(x)|^2\;dx\leq CR^2\fint_{B_R(x_0)} |Xw(x)|^2\;dx\leq CR^2\fint_{B_R(x_0)} |X\log(\ol{u}(x))|^2\;dx.
\end{equation*}
On the other hand Lemma \ref{Local boundness Lemma} implies
\begin{equation*}
\sup_{B_{R/2}(x_0)}w\leq c\biggl(\fint_{B_R(x_0)}h^2(u)\;dx\biggr)^{\frac{1}{2}}.
\end{equation*}
Concatenating inequalities above and recalling that from Lemma \ref{Estimates X logu Lemma} follows
\begin{equation*}
\fint_{B_R(x_0)}|X\log\ol{u}|^2\;dx\leq \frac{C}{R^2}
\end{equation*}
we find
\begin{equation*}
\sup_{B_{R/2}(x_0)}w\leq cR\biggl(\fint_{B_R(x_0)}|X\log\ol{u}|^2\;dx \biggr)^{\frac{1}{2}}\leq c
\end{equation*}
from which
\begin{equation*}
\inf_{B_{R/2}(x_0)}\ol{u}(x)\geq e^{-c}=:c_0.
\end{equation*}
So that, if $\sigma(R)< \e\leq \frac{c_0}{2}$ we find $\inf_{B_{R/2}(x_0)}u(x)\geq c_0/2$.
\end{proof}

It is easy to see that for every non negative $u\in \K_{\Omega, g+\sum_{i=1}^N\partial_i f_i} $ if $\lambda\geq 0$ we have $\lambda u\in \K_{\Omega,\lambda( g+\sum_{i=1}^N\partial_i f_i)} $ and if $\tau-\lambda u\geq 0$ then $\tau-\lambda u\in \K_{\Omega,-\lambda(g+\sum_{i=1}^N\partial_i f_i)} $. Moreover, structural constants in Theorem \ref{thm crit density X elliptic} are independent of $g$ and of $\partial_if_i$. 

Since  $d$ is the Carnot–Carathéodory distance $d$ related to the family of vectors fields $X$ and we are assuming that it is continuous with respect to the Euclidean topology, by 
 \cite[Lemma 3.7]{GN}  and \cite[Lemma 2.8]{DGL} 
 the function $r\to \mu(B_r(x))$ is continuous and we can apply Theorems \ref{teo: power decay} 
and \ref{Harnack} to the family of functions $\K_{\Omega, g+\sum_{i=1}^N\partial_i f_i}$ to get the following Harnack inequality.

\begin{thm}
Let $\Omega\subset\mathbb{R}^N$ be a domain in which the reverse doubling property holds and $u \in W^1_{\text{loc}}(\Omega)$ be a non negative weak solution to $Lu=g+\sum_{i=1}^N\partial_i f_i$ in $\Omega$, $r<r_0=r_0(K_0)$. Then there exists a structural constant $\eta>4$ such that for every $B_{\eta r}\subseteq\Omega$ we have
\begin{equation*}
\sup_{B_r}u\leq C(\inf_{B_r}u+r^{\delta}\|\gamma_0\|_{L^{2p}(\Omega)}+r^{2\delta}\|g\|_{L^p(\Omega)}).
\end{equation*}
Here $K_0\subset\mathbb{R}^N$ is a fixed compact set whose interior contains the closure of $\Omega$, $r_0$ is chosen small enough so the Sobolev Inequality holds for every $B_{4r}(x)\subset\Omega$, $\delta=1-\frac{Q}{2p}$ and $C$ is a structural constant.
\end{thm}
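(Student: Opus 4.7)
The plan is to deploy the abstract machinery of Section \ref{sect abstract H} applied to the family $\mathbb{K}_{\Omega, g+\sum_{i=1}^N\partial_i f_i}$ together with the set function
\[
\mathcal{S}_{\Omega}\Bigl(B_{R}(x_0),\,g+\sum_{i=1}^N\partial_i f_i\Bigr):=R^{\delta}\|\gamma_0\|_{L^{2p}(\Omega)}+R^{2\delta}\|g\|_{L^p(\Omega)}.
\]
The first step is to verify the bookkeeping axioms: $\mathcal{S}_\Omega$ is manifestly nondecreasing in $R$ and absolutely homogeneous under scaling of $f=g+\sum_i\partial_i f_i$; moreover, as already observed in the paragraph preceding the theorem, $\K_{\Omega,\cdot}$ is closed under the two transformations of Definition \ref{def KOmega} (nonnegative scaling, and $\tau-\lambda u$ when the latter is nonnegative). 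This sets the abstract stage.

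Next I would establish the critical density property $CD(\nu,c,\e,\eta_{CD})$ for the family $\K_{\Omega,\lambda(g+\sum_i\partial_i f_i)}$ uniformly in $\lambda\geq 0$. This is exactly the content of Theorem \ref{thm crit density X elliptic}, which holds for every $0<\nu<1$ with structural constants independent of the right-hand side; in particular I may pick $\nu<1/C_D^2$. To trigger Theorem \ref{teo: power decay} I would invoke alternative (B): hypothesis (B1) is just provided by Theorem \ref{thm crit density X elliptic} with the choice $\nu<1/C_D^2$, while (B2)—continuity of $r\mapsto \mu(B_r(x))$—follows, because $d$ is the Carnot--Carathéodory distance associated with a family of locally Lipschitz vector fields and is continuous with respect to the Euclidean topology (via \cite[Lemma 3.7]{GN} and \cite[Lemma 2.8]{DGL}). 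Finally, the standing reverse doubling hypothesis provides the structural constant $0<\delta<1$ with $\mu(B_r(x))\leq \delta \mu(B_{2r}(x))$ for every $B_{2r}(x)\subset\Omega$ required in the preamble of Theorem \ref{teo: power decay}. Therefore $\K_{\Omega,g+\sum_i\partial_if_i}$ enjoys the power decay property $PD(M,\gamma,\e_P,\eta_P)$.

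With power decay in hand, Theorem \ref{Harnack} applies directly. Since $d$ is an honest metric we may take $K=1$ in the quasi-triangle inequality, so the constant in Theorem \ref{Harnack} becomes $\eta=2(2\eta_P+1)>4$, a structural constant. For any $u\in\K_{\Omega,g+\sum_i\partial_if_i}$ that is locally bounded (here locally bounded follows from the local boundedness Lemma \ref{Local boundness Lemma}) and any $B_{\eta r}(x_0)\subset\Omega$ with $r<r_0$, we obtain
\[
\sup_{B_r(x_0)} u\leq C\Bigl(\inf_{B_r(x_0)} u+(\eta r)^{\delta}\|\gamma_0\|_{L^{2p}(\Omega)}+(\eta r)^{2\delta}\|g\|_{L^p(\Omega)}\Bigr),
\]
and absorbing $\eta^{\delta}$ and $\eta^{2\delta}$ into $C$ yields the statement.

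The one conceptual point needing attention—and the most delicate in the proposal—is checking that the family version of the hypotheses of Theorem \ref{teo: power decay} is actually met: critical density must be available not just for $\K_{\Omega,f}$ but for $\K_{\Omega,\lambda f}$ with constants independent of $\lambda$, so that the iteration scheme in Proposition \ref{prop: critical dens alpha} and Lemma \ref{lemma raggio rho} can be run. This is exactly why one needs the explicit uniformity in the right-hand side that Theorem \ref{thm crit density X elliptic}, together with Lemmas \ref{Local boundness Lemma}, \ref{Fabes Lemma}, \ref{Estimates X logu Lemma}, provides: the Moser-type constants in those lemmas depend only on the ellipticity constants, the doubling constant, the Poincaré constant, the Lipschitz constants of the $X_j$'s and $(\|\gamma\|_{2p}^2+1)^{1/2}$, but not on the particular $g$ or $f_i$. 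Once this uniformity is highlighted, the application of the abstract theorems is routine.
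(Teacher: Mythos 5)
Your proposal is correct and reproduces the paper's argument almost verbatim: verify the closure axioms for $\K_{\Omega,\cdot}$ and the monotonicity/homogeneity of the chosen $\mathcal{S}_\Omega$, feed the uniform critical density of Theorem \ref{thm crit density X elliptic} (with $\nu<1/C_D^2$) together with continuity of $r\mapsto\mu(B_r(x))$ into alternative (B) of Theorem \ref{teo: power decay}, then apply Theorem \ref{Harnack} and absorb the harmless $\eta^{\delta},\eta^{2\delta}$ factors. The only additions beyond what the paper states — explicitly noting that local boundedness comes from Lemma \ref{Local boundness Lemma}, that the Carnot–Carath\'eodory distance gives $K=1$, and emphasizing the $\lambda$-uniformity of the critical-density constants needed by Proposition \ref{prop: critical dens alpha} — are sound and make the argument more self-contained without changing the route.
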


\end{document}